\numberwithin{equation}{section}
\begin{document}
\newtheorem{theorem}{Theorem}[section]
\newtheorem{proposition}[theorem]{Proposition}
\newtheorem{remark}[theorem]{Remark}
\newtheorem{corollary}[theorem]{Corollary}
\newtheorem{definition}{Definition}[section]
\newtheorem{lemma}[theorem]{Lemma}

\title{\large Measure Upper Bounds for Nodal Sets of Eigenfunctions of the bi-Harmonic Operator}
\author{\normalsize Long Tian \\
\scriptsize Department of Applied Mathematics, Nanjing University
of Science $\&$ Technology, Nanjing, China
\\
\scriptsize email: tianlong19850812@163.com\\
\normalsize Xiaoping Yang\\
\scriptsize Department of Mathematics, Nanjing University, Nanjing, China
\\
\scriptsize email: xpyang@nju.edu.cn}

\date{}
\maketitle
 \fontsize{12}{22}\selectfont\small
 \paragraph{Abstract:}
 In this article, we consider eigenfunctions $u$ of the bi-harmonic operator, i.e.,
 $\triangle^2u=\lambda^2u$ on $\Omega$ with some homogeneous linear boundary conditions.
We assume that $\Omega\subseteq\mathbb{R}^n$ ($n\geq2$) is a $C^{\infty}$ bounded domain, $\partial\Omega$ is piecewise analytic and $\partial\Omega$ is analytic except a set $\Gamma\subseteq\partial\Omega$ which is a finite union of some compact $(n-2)$ dimensional submanifolds of $\partial\Omega$. The main result of this paper is that the measure upper bounds of the nodal sets of the eigenfunctions is controlled by $\sqrt{\lambda}$. We first define a frequency function and a doubling index related to these eigenfunctions. With the help of establishing the monotonicity formula, doubling conditions and various a priori estimates, we obtain that the $(n-1)$ dimensional Hausdorff measures of nodal sets of these eigenfunctions in a ball are controlled by the frequency function and $\sqrt{\lambda}$.
In order to further control the frequency function with $\sqrt{\lambda}$, we first establish the relationship between the frequency function and the doubling index, and then separate the domain $\Omega$ into two parts: a domain away from $\Gamma$ and a domain near $\Gamma$, and develop iteration arguments to deal with the two cases respectively.
\\[10pt]
\emph{Key Words}: Eigenfunctions, bi-harmonic operator, frequency, doubling conditions, doubling index,
 measure estimates, nodal sets.
\\[10pt]
\emph{Mathematics Subject Classification}(2010): 58E10, 35J30.

\vspace{1cm}\fontsize{12}{22}\selectfont
\footnote{This work is supported by National Science Foundation of China (No.11401307,No.11531005 and No.11401318).}
\section{Introduction}
In this paper, we focus on the eigenvalue problem $\triangle^2u=\lambda^2u$ in $\Omega$, where $\triangle$ is the Laplacian operator. We always assume that $\Omega\subseteq\mathbb{R}^n(n\geq2)$ is a bounded domain and $\partial\Omega$ is of $C^{\infty}$ piecewise analytic and $\partial\Omega$ is analytic except a set $\Gamma\subseteq\partial\Omega$ which is a finite union of some $(n-2)$ dimensional submanifolds of $\partial\Omega$. Through this paper, we always consider the following homogeneous boundary condition:
\begin{equation}\label{boundary conditions}
B_ju=0,\quad j=1,2,\quad on \quad \partial\Omega,
\end{equation}
where $B_ju=\sum\limits_{|\alpha|\leq3}a_{\alpha,j}(x)D^{\alpha}u(x)$ on $\partial\Omega$, and we also assume that the constants $a_{\alpha,j}(x)$ are all $C^{\infty}$ and piecewise analytic on $\partial\Omega$, and analytic on $\partial\Omega\setminus\Gamma$.

In 1979, F.J.Almgren in $\cite{F.J.Almgren}$ introduced the frequency concept for the harmonic functions. Then in 1986 and 1987, N.Garofalo and F.H.Lin in $\cite{N.Garofalo and F.H.Lin1}$ and $\cite{N.Garofalo and F.H.Lin2}$ established the monotonicity formula for the frequency functions and the doubling conditions for solutions of  uniformly linear elliptic equations of second order, and obtained the unique continuation property of such solutions. In 1991, F.H.Lin in $\cite{F.H.Lin}$ investigated the measure estimates of nodal sets of solutions to some uniformly linear elliptic equations of second order. In 2000, Q.Han, R.Hardt and F.H.Lin in $\cite{Q.Han R.Hardt and F.H.Lin}$ showed the structure of the nodal sets of solutions for a class of uniformly linear elliptic equations of higher order. It is pointed out that the nodal sets of solutions to higher order elliptic equations are very different from those of solutions to second order elliptic equations. For uniformly linear elliptic operators of second order, Hausdorff measures of critical zero sets of solutions are at most $n-2$ dimensional. But for uniformly linear elliptic operators of higher order, this conclusion is not always true. In fact, it is showed in $\cite{Q.Han R.Hardt and F.H.Lin}$ that, if $u$ is a solution of an $l-$th order homogeneous uniformly elliptic equation, then the set $\left\{x\in\mathbb{R}^n|u(x)=0,|\nabla u(x)|=0,\cdots, |\nabla^{l-2}u(x)|=0\right\}$ may be $n-1$ dimensional, and the set $\left\{x\in\mathbb{R}^n|u(x)=0,|\nabla u(x)|=0,\cdots, |\nabla^{l-1}u(x)|=0\right\}$ must be at most $n-2$ dimensional. The latter set is called the singular set of those solutions.  In 2003, Q.Han, R.Hardt and F.H.Lin in $\cite{Q.Han R.Hardt and F.H.Lin}$ investigated the structures and measure estimates of singular sets of solutions to uniformly linear elliptic equations of higher order. In 2014, the authors in $\cite{L.Tian and X.P.Yang}$ gave the measure estimates of nodal sets for bi-harmonic functions.

On the other hand, there have been a number of very interesting and intensive results on nodal sets for eigenfunctions to elliptic operators. S.T.Yau conjectured that, for a Laplacian eigenfunction $u$, i.e., $u$ satisfies the equation $$\triangle u+\lambda u=0$$ on a compact $n$ dimensional Riemannian manifold without boundary, it holds that
\begin{equation}\label{1.4}
c\lambda^{1/2}\leq\mathcal{H}^{n-1}\left(\left\{u(x)=0\right\}\right)\leq C\lambda^{1/2},
\end{equation}
where $\mathcal{H}^{n-1}\left(\left\{u(x)=0\right\}\right)$ denotes the $(n-1)$ dimensional Hausdorff measure of the nodal set $\left\{u=0\right\}$ on the whole manifold, $C$ is an absolute positive constant independent of $\lambda$ and $u$. This conjecture was proved for real analytic manifolds by Bruning and Yau independently in two dimensional case, and by H.Donnelly and C.Fefferman in $\cite{H.Donnelly and C.Fefferman1}$ in higher dimensional case. For $C^{\infty}$ manifolds, the following upper bound
\begin{equation}\label{1.5}
\mathcal{H}^{n-1}\left(\left\{u=0\right\}\right)\leq C\lambda^{3/4}
\end{equation}
was proved by H.Donnelly and C.Fefferman in $\cite{H.Donnelly and C.Fefferman2}$ in two dimensional case and R.T.Dong gave a very different proof for the same bound in $\cite{R.T.Dong}$ by introducing a second order frequency function. The upper bound was refined to $C\lambda^{3/4-\epsilon}$ by A.Logunov and E.Melinnikova in $\cite{A.Logunov and E.malinnikova}$. For the high dimensional case, the estimate
\begin{equation}\label{1.6}
\mathcal{H}^{n-1}\left(\left\{u=0\right\}\right)\leq C\lambda^{C\sqrt{\lambda}}
\end{equation}
was given by R.Hardt and L.Simon in $\cite{R.Hardt and L.Simon}$. In 2016, A.Logunov in $\cite{A.Logunov}$ improved the upper bound to $C\lambda^{\alpha}$ for some $\alpha>1/2$.

There are also a large number of papers concerning the lower bounds for the measure of nodal sets of Laplacian eigenfunctions(see for example $\cite{T.H.Colding and W.P.Minicozzi}$, $\cite{H.Donnelly and C.Fefferman1}$, $
\cite{C.D.Sogge and S.Zelditch}$.)

For eigenfunctions of higher order equations, the related results are very limited.  I.Kukavica in $\cite{I.Kukavica}$ gave the optimal upper measure bound of nodal sets for eigenfunctions of higher order elliptic operators in the analytic case. The eigenfunctions can be extended pass through the boundary of $\Omega$ because the operator and the boundary $\partial\Omega$ are assumed to be analytic in that paper. And then the growth order of an eigenfunction can be estimated and the measure upper bound of its nodal set can be derived. In this paper, we do not assume that the boundary $\partial\Omega$ is totally analytic. In this case, the method in $\cite{I.Kukavica}$ cannot be used directly.
So first we will define a frequency function related to an eigenfunction and establish the monotonicity formula, doubling conditions, and various estimates. As a result, we will further derive a measure upper bound of the nodal set of an eigenfunction, where the bound contains the  corresponding eigenvalue $\lambda$ and the frequency function. Next we will derive an upper bound for the frequency function, whose center is away from the nonanalytic part $\Gamma\subseteq\partial\Omega$, with the help of the doubling index of the bi-harmonic eigenfunction in a ball. We also construct an iteration scheme to control the frequency function and
extend the eigenfunction $u$ to outside of $\Omega$ by passing through the part of the boundary $\partial\Omega$ away from $\Gamma$. These lead to prove the upper bound for the measure of the nodal set of the eigenfunction $u$ in the domain $\Omega\setminus T_{R_0}(\Gamma)$ for some suitable domain
$T_{R_0}(\Gamma)=\left\{x\in\overline{\Omega}:dist(x,\Gamma)<R_0\right\}$, where $R_0$ is some suitable positive constant depending only on $n$ and $\Omega$. Finally, we locally establish an upper bound of the doubling index near the points in the nonanalytic parts by applying the upper bound of the frequency function in $\Omega$. And then by using an iteration method, we can get the desired estimation of the nodal set of $u$ near $\Gamma$.

More precisely, in order to get the upper bound for the frequency function, we first start to derive an upper bound for the frequency function centered at some point $y_0=(\bar{x},0)$ with $\bar{x}\in\Omega$ satisfying that the function value $|u(\bar{x})|$ is suitable large. Then from an iteration argument, the relationship between the frequency function and the doubling index, the ``changing center'' property for the frequency function, and the doubling conditions, we can get the upper bound for the frequency function centered at some point away from $\Gamma$. Then by the similar iteration argument again, we can get the upper bound for the frequency function at any point away from $\Gamma$.
For the frequency centered near $\Gamma$, we start from some point away from $\Gamma$, whose doubling index is controlled by $\sqrt{\lambda}$. Then by using the iteration arguments, the analytically extending of $u$, the relationship between the frequency function and the doubling index, and the doubling conditions, we can get the upper bound for the frequency function centered near $\Gamma$.
Actually one notes that the bound for the frequency function may go to infinity when the corresponding center tends to the nonanalytic part $\Gamma$. However, because the dimension of $\Gamma$ is $(n-2)$, and the upper bound for the frequency function goes to infinity not very fast as the center of the frequency function tends to $\Gamma$, the measure for the nodal set of $u$ in a domain near $\Gamma$ can be obtained by using the iteration method.

The main result of this paper is as follows.
\begin{theorem}
Let $u$ be a solution to the following boundary value problem:
\begin{equation}
\begin{cases}
\triangle^2u=\lambda^2u\quad in\quad\Omega,\\
B_ju=0,\quad j=1,2,\quad on\quad\partial\Omega,
\end{cases}
\end{equation}
where $B_j$, $j=1,2$ are linear boundary operators defined as $B_ju=\sum\limits_{|\alpha|\leq3}a_{\alpha,j}(x)D^{\alpha}u(x)$ on $\partial\Omega$, where $a_{\alpha,j}(x)$ are all $C^{\infty}$ and piecewise analytic on $\partial\Omega$, and analytic on $\partial\Omega\setminus\Gamma$.  Then the measure of nodal set of $u$ has the following estimate:
\begin{equation}
\mathcal{H}^{n-1}\left(\left\{x\in\Omega|u(x)=0\right\}\right)\leq C\sqrt{\lambda},
\end{equation}
where $C$ is a positive constant depending only on $n$, $\Omega$ and the boundary operators $B_j$, $j=1,2$.
\end{theorem}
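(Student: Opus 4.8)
\emph{Proof strategy.} The plan is to split the argument into two stages: first I would reduce the global bound on the nodal set to a pointwise upper bound on a frequency function adapted to the fourth-order eigenvalue equation, and then I would establish that this pointwise bound is $O(\sqrt\lambda)$ by propagating a favorable estimate from a point where $|u|$ is comparatively large.

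\textbf{Stage 1: from the frequency function to the nodal measure.} To each $y_0\in\overline\Omega$ and each admissible radius $r$ I would attach a frequency function $N(y_0,r)$ built from the Dirichlet-type energy $H(y_0,r)=\int_{B_r(y_0)}\bigl(|\triangle u|^2+\text{lower order terms}\bigr)$ natural for $\triangle^2-\lambda^2$, normalized by a weighted boundary integral quadratic in $u$ and $\nabla u$. Using a Rellich-type (Pohozaev) identity for $\triangle^2 u=\lambda^2 u$, I expect to prove an almost-monotonicity formula, namely that $e^{C(1+\sqrt\lambda)r}\bigl(N(y_0,r)+C\bigr)$ is nondecreasing for $r$ below a fixed radius $R_0$; the crucial point is that the $\lambda^2u$ term produces error terms which, after Cauchy--Schwarz and using $r\le R_0$, are dominated by $C\sqrt\lambda$ times the denominator of $N$ --- which is exactly why $\sqrt\lambda$, rather than $\lambda$, survives in the end. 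From monotonicity I would deduce a doubling inequality $H(y_0,2r)\le e^{C(N(y_0,4r)+\sqrt\lambda)}H(y_0,r)$ and, via interior elliptic estimates, an $L^\infty$ doubling index $\beta(y_0,r)$ comparable on fixed scales to $N(y_0,r)+\sqrt\lambda$. On any ball $B_r(y_0)$ compactly contained in $\Omega$ I would then invoke the now-standard machinery (complexification of $u$, integral geometry, counting zeros of $u$ along lines in terms of the doubling index; cf. Donnelly--Fefferman, Han--Hardt--Lin, and the authors' earlier work on bi-harmonic functions) to obtain
\[
\mathcal H^{n-1}\bigl(\{u=0\}\cap B_{r/2}(y_0)\bigr)\le C\bigl(N(y_0,r)+\sqrt\lambda\bigr)\,r^{n-1}.
\]
Covering the relevant parts of $\Omega$ by $O(r^{-n})$ such balls of a fixed radius reduces the theorem to the pointwise estimate $N(y_0,r_0)\le C\sqrt\lambda$.

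\textbf{Stage 2: bounding the frequency by $\sqrt\lambda$.} I would first fix $\bar x\in\Omega$ with $|u(\bar x)|\ge c\|u\|_{L^\infty(\Omega)}$; interior estimates for $\triangle^2u=\lambda^2u$ yield a growth bound $\|u\|_{L^\infty(B_{2r}(\bar x))}\le e^{C\sqrt\lambda}\|u\|_{L^\infty(B_r(\bar x))}$, hence $N(\bar x,r_0)\le C\sqrt\lambda$. Away from $\Gamma$, I would use the ``changing center'' comparison for $N$, the equivalence $N\sim\beta$ on fixed scales, the doubling conditions, and the analytic continuation of $u$ across the analytic part $\partial\Omega\setminus\Gamma$ (where $u$ extends to a solution of an elliptic equation with analytic coefficients, so that balls are permitted to protrude from $\Omega$) in order to chain a path from $\bar x$ to an arbitrary $y\in\Omega\setminus T_{R_0}(\Gamma)$, gaining only a bounded factor per step, and so conclude $N(y,r_0)\le C\sqrt\lambda$ there. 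Near $\Gamma$, where the continuation is unavailable and $N$ degenerates as the center tends to $\Gamma$, I would iterate inward from a point at distance $\sim R_0$: halving the distance to $\Gamma$ at each step increases $N$ only mildly, so at distance $\sim2^{-k}$ the frequency is bounded by $C_k\sqrt\lambda$ with $C_k$ growing slowly in $k$ (polynomially). Since $\dim\Gamma=n-2$, the shell at distance $\sim2^{-k}$ from $\Gamma$ is covered by $O(2^{k(n-2)})$ balls of radius $\sim2^{-k}$, so by Stage 1 its nodal measure is $\lesssim 2^{k(n-2)}\,C_k\sqrt\lambda\,(2^{-k})^{n-1}=C_k\sqrt\lambda\,2^{-k}$, and $\sum_k C_k 2^{-k}<\infty$. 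Adding this to the contribution of $\Omega\setminus T_{R_0}(\Gamma)$ gives $\mathcal H^{n-1}(\{u=0\}\cap\Omega)\le C\sqrt\lambda$.

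\textbf{Main obstacle.} The technical heart, and where I expect the real work to lie, is Stage 1: obtaining the monotonicity formula and doubling conditions for the fourth-order operator $\triangle^2-\lambda^2$ with the $\lambda$-dependence entering only as $\sqrt\lambda$. Unlike the second-order Garofalo--Lin framework, the frequency must be formed from $\int|\triangle u|^2$ with considerably more delicate weighted boundary terms, and the error contributions of $\lambda^2u$ must be reabsorbed so as to leave $e^{C\sqrt\lambda r}$, not $e^{C\lambda r}$, in front of the frequency. Secondary difficulties are arranging the analytic continuation across $\partial\Omega\setminus\Gamma$ in a way compatible with the frequency machinery, and the quantitative estimate showing that $C_k$ grows slowly enough for the dyadic sum near $\Gamma$ to converge.
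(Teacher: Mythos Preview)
Your two-stage architecture---local nodal estimate via a frequency/doubling machinery, then propagation of a $\sqrt\lambda$ bound on the frequency from a point where $|u|$ is large, with analytic continuation across $\partial\Omega\setminus\Gamma$ and a dyadic summation near $\Gamma$ exploiting $\dim\Gamma=n-2$---matches the paper's overall plan essentially step for step, including the chaining/iteration in Stage~2.

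The substantive divergence is exactly where you flag the main obstacle: the construction of the frequency. You propose a genuinely fourth-order frequency built from $\int_{B_r}|\triangle u|^2$ with a boundary quadratic in $(u,\nabla u)$, and you correctly anticipate that reabsorbing the $\lambda^2 u$ term so that only $\sqrt\lambda$ (not $\lambda$) survives is delicate. The paper sidesteps this entirely by a \emph{factor-and-lift} trick: write $\triangle^2-\lambda^2=(\triangle-\lambda)(\triangle+\lambda)$, set $v=(\triangle-\lambda)u$, and lift to $\mathbb{R}^{n+1}$ via $g(x,x_{n+1})=u(x)e^{\sqrt\lambda\,x_{n+1}}$, $h(x,x_{n+1})=v(x)e^{\sqrt\lambda\,x_{n+1}}$. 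This converts the problem to the second-order system $\triangle h=0$, $\triangle g=h+2\lambda g$, and the frequency is the first-order Garofalo--Lin type quotient
\[
N(y_0,r)=\frac{r\displaystyle\int_{B_r(y_0)}\bigl(|\nabla g|^2+|\nabla h|^2+gh+2\lambda g^2\bigr)\,dy}{\displaystyle\int_{\partial B_r(y_0)}(g^2+h^2)\,d\sigma},
\]
whose almost-monotonicity holds with constants depending only on $n$---no $\lambda$ in the exponent at all. The $\sqrt\lambda$ enters in a single, transparent place: passing between $g$ and $u$ costs factors $e^{\pm\sqrt\lambda\,r}$ from the exponential weight, and these are what appear in the doubling index and ultimately in the nodal bound.

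So your outline is sound, but the paper's lifting device is precisely the clean resolution of the obstacle you identified. Your direct fourth-order route may be workable in principle (the authors' earlier bi-harmonic paper, without eigenvalue, goes that way), but a naive Pohozaev computation for $\triangle^2 u=\lambda^2 u$ produces error terms of size $\lambda$ rather than $\sqrt\lambda$; getting $\sqrt\lambda$ without the lift would require an additional idea you have not supplied.
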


The rest of this paper is organized as follows. In the second section, we give the definition of a frequency function related to bi-harmonic eigenfunctions and show some interesting properties including the monotonicity formula and some estimates for the frequency function. In the third section, we obtain some doubling conditions and show the ``changing center'' property for the monotonicity formula. In the fourth section, we give a measure estimate of nodal sets of eigenfunctions in $\Omega(R_0,\Gamma)$. Here $\Omega(R_0,\Gamma)\subseteq\overline{\Omega}$
 and the distance of $\Omega(R_0,\Gamma)$ and $\Gamma$ is greater than some constant $R_0$
 depending only on $n$ and $\Omega$.  Finally in the last section, we give the measure estimate of the nodal set of the eigenfunction $u$ near the nonanalytic set $\Gamma$.

\section{Frequency function}
We rewrite the equation $\triangle^2u=\lambda^2u$ as the following forms:
\begin{eqnarray*}
(\triangle-\lambda)u&=&v,\\
(\triangle+\lambda)v&=&0.
\end{eqnarray*}
Let $g(x,x_{n+1})=u(x)e^{\sqrt{\lambda}x_{n+1}}$ and $h(x,x_{n+1})=v(x)e^{\sqrt{\lambda}x_{n+1}}$. Thus we have
\begin{eqnarray*}
(\triangle-2\lambda)g&=&h,\\
\triangle h&=&0,
\end{eqnarray*}
where $g$ and $h$ are all considered as functions of $n+1$ dimensional variable $y=(x,x_{n+1})$.
Then we define the frequency function centered at $y_0$ as follows:
\begin{equation}\label{definition of frequency}
N(y_0,r)=r\frac{\int_{B_r(y_0)}\left(|\nabla g|^2+|\nabla h|^2+gh+2\lambda g^2\right)dy}{\int_{\partial B_r(y_0)}\left(g^2+h^2\right)d\sigma},
\end{equation}
where $d\sigma$ is the $n$ dimensional Hausdorff measure on $\partial B_r(y_0)$. We use the notation
$$
D_1(y_0,r)=\int_{B_r(y_0)}|\nabla g|^2dy,\quad D_2(y_0,r)=\int_{B_r(y_0)}|\nabla h|^2dy,
$$
$$
D_3(y_0,r)=\int_{B_r(y_0)}ghdy,\quad D_4(y_0,r)=2\lambda\int_{B_r(y_0)}g^2dy,
$$
$$
H_1(y_0,r)=\int_{\partial B_r(y_0)}g^2d\sigma,\quad H_2(y_0,r)=\int_{\partial B_r(y_0)}h^2d\sigma,
$$
$$
D(y_0,r)=D_1(y_0,r)+D_2(y_0,r)+D_3(y_0,r)+D_4(y_0,r),
$$
$$
H(y_0,r)=H_1(y_0,r)+H_2(y_0,r).
$$
 Then the frequency function can be written as $$N(y_0,r)=r\frac{D(y_0,r)}{H(y_0,r)}=r\frac{D_1(y_0,r)+D_2(y_0,r)+D_3(y_0,r)+D_4(y_0,r)}{H_1(y_0,r)+H_2(y_0,r)},$$
 and it is also easy to check that
 \begin{equation}\label{another form of frequency}
 N(y_0,r)=r\frac{\int_{\partial B_r(y_0)}(gg_n+hh_n)d\sigma}{\int_{\partial B_r(y_0)}(g^2+h^2)d\sigma},
 \end{equation}
where $g_n$ and $h_n$ mean $\nabla g\cdot \overrightarrow{n}$ and $\nabla h\cdot\overrightarrow{n}$ respectively, and $\overrightarrow{n}$ is the outer unit normal vector on $\partial B_r(y_0)$.

 Such a frequency function has following properties.

 \begin{lemma}\label{vanishing order property for frequency function in tirangle case}
 If the vanishing orders of $g$ and $h$ at point $y_0$ are $k$ and $l$ respectively, then it holds that
 \begin{equation}\label{lemma vanishing order property for frequency function in tirangle case}
 \lim\limits_{r\longrightarrow0+}N(y_0,r)=\min\left\{k,l\right\}.
 \end{equation}
 \end{lemma}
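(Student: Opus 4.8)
The plan is to read off the limit directly from the local behaviour of $g$ and $h$ near $y_0$, working with the boundary form \eqref{another form of frequency} of the frequency rather than its volume form. After translating so that $y_0=0$, recall that $h$ is harmonic and $g$ solves $\triangle g-2\lambda g=h$, so by elliptic regularity both $g$ and $h$ are smooth (in fact real-analytic) near $0$, which is what makes the notion of a finite vanishing order meaningful. Writing the Taylor expansions at $0$ we get $g(y)=g_k(y)+o(|y|^k)$ and $h(y)=h_l(y)+o(|y|^l)$, where $g_k$ and $h_l$ are the (nonzero) homogeneous parts of degrees $k$ and $l$; differentiating, $\nabla g(y)=\nabla g_k(y)+o(|y|^{k-1})$ and $\nabla h(y)=\nabla h_l(y)+o(|y|^{l-1})$, since the vanishing orders of $g$ and $h$ force their gradients to vanish to orders $k-1$ and $l-1$. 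In particular $\int_{\partial B_r}(g^2+h^2)\,d\sigma>0$ for all small $r>0$, so $N(0,r)$ is defined there and, by \eqref{another form of frequency}, we may work with $N(0,r)=r\,\frac{\int_{\partial B_r}(g\,g_n+h\,h_n)\,d\sigma}{\int_{\partial B_r}(g^2+h^2)\,d\sigma}$.

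The next step is to expand the four boundary integrals. On $\partial B_r(0)\subset\mathbb{R}^{n+1}$ the outer unit normal is radial, so $g_n=\partial_r g$, and Euler's identity gives $\partial_r g_k=(k/r)g_k$ on $\partial B_r$; substituting the expansions above and noting that the cross terms between the leading homogeneous part and the remainder are, by Cauchy--Schwarz, of strictly lower order in $r$, one obtains
\begin{equation*}
\int_{\partial B_r(0)} g\,g_n\,d\sigma = k\,a_k\,r^{2k+n-1}+o(r^{2k+n-1}), \qquad \int_{\partial B_r(0)} g^2\,d\sigma = a_k\,r^{2k+n}+o(r^{2k+n}),
\end{equation*}
with $a_k=\int_{\partial B_1} g_k^2\,d\sigma>0$ because $g_k$ is a nonzero homogeneous polynomial; the same identities hold for $h$ with $l$ in place of $k$ and a constant $b_l=\int_{\partial B_1} h_l^2\,d\sigma>0$. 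The only point needing care here is the degenerate case $k=0$ (or $l=0$): then $k\,a_k=0$, and instead of the displayed expansion one argues directly that $\int_{\partial B_r} g\,g_n\,d\sigma=O(r^n)$ while $\int_{\partial B_r} g^2\,d\sigma\ge c\,r^n$ for small $r$, so that term is still negligible.

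Finally, substitute into $N(0,r)$ and let $r\to0^+$, distinguishing the three regimes. If $k<l$, the $r^{2k+n}$ contributions dominate both numerator and denominator and $N(0,r)\to k$; if $k>l$, the $r^{2l+n}$ contributions dominate and $N(0,r)\to l$; if $k=l$, both contributions survive and $N(0,r)\to k\,\frac{a_k+b_k}{a_k+b_k}=k$. In every case the limit equals $\min\{k,l\}$, as claimed. I expect the main obstacle to be purely bookkeeping: in each regime one must verify that every dropped term is genuinely of lower order in $r$, with the degenerate case above being the one spot where the leading coefficient vanishes and a separate estimate is needed. It is worth noting that no harmonicity of $g_k$ is used — all the cancellations come from Euler's identity on $\partial B_r$ together with Green's formula, the latter being precisely what converts the volume integrand in the definition of $N$ into the boundary integrand of \eqref{another form of frequency}.
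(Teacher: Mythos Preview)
Your proposal is correct and follows essentially the same route as the paper: translate to $y_0=0$, pass to the boundary form \eqref{another form of frequency}, expand $g$ and $h$ through their leading homogeneous parts, use Euler's identity on $\partial B_r$, and read off the limit as the dominant power of $r$ in numerator and denominator. Your write-up is in fact slightly more careful than the paper's---you justify the control of the cross terms via Cauchy--Schwarz and separately handle the degenerate case $k=0$ (or $l=0$)---but these are refinements of the same argument, not a different one.
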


 \begin{proof}
 Without loss of generality, we may assume that $y_0=0$. Because the vanishing order of $g$ at the origin is $k$, we have
 \begin{equation*}
 g(y)=P_k(y)+o(|y|^k),
 \end{equation*}
 where $P_k(y)$ is a homogeneous polynomial of degree $k$. Similarly, we have
 \begin{equation*}
 h(y)=P_l(y)+o(|y|^l),
 \end{equation*}
 where $P_l(y)$ is a homogeneous polynomial of degree $l$. In the polar coordinate system $(r,\theta)=(r,\theta_1,\theta_2,\cdots,\theta_n)$, $P_k(y)$ and $P_l(y)$ can be written as the following forms.
 \begin{equation*}
 P_k(y)=r^k\phi(\theta),\quad P_l(y)=r^l\psi(\theta).
 \end{equation*}
 Then from $(\ref{another form of frequency})$, we have
 \begin{eqnarray*}
 N(0,r)&=&r\frac{\int_{\partial B_r}\left(gg_n+hh_n\right)d\sigma}{\int_{\partial B_r}\left(g^2+h^2\right)d\sigma}
 \\&=&
 \frac{\int_{\partial B_1}\left(kr^{2k+n}\phi^2(\theta)+lr^{2l+n}\psi^2(\theta)+o(r^{2k+n})+o(r^{2l+n})\right)d\sigma}{\int_{\partial B_1}\left(r^{2k+n}\phi^2(\theta)+r^{2l+n}\psi^2(\theta)+o(r^{2k+n})+o(r^{2l+n})\right)d\sigma}.
 \end{eqnarray*}
 Letting $r\longrightarrow0+$, we have the desired result.
 \end{proof}

 \begin{lemma}\label{lower bound for the frequency}
 The frequency defined in $(\ref{definition of frequency})$ has the following lower bound:
 \begin{equation}\label{lemma lower bound for the frequency}
 N(y_0,r)\geq-\frac{r^2}{8(n+1)\lambda}.
 \end{equation}
 \end{lemma}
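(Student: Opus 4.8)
The plan is to reduce the lower bound for $N(y_0,r)$ to controlling the single indefinite term $D_3(y_0,r)=\int_{B_r(y_0)}gh\,dy$ in the numerator. The denominator $H(y_0,r)=H_1(y_0,r)+H_2(y_0,r)$ is non-negative (and strictly positive for a non-trivial eigenfunction $u$), while among the four pieces of the numerator the terms $D_1(y_0,r)=\int_{B_r(y_0)}|\nabla g|^2\,dy$, $D_2(y_0,r)=\int_{B_r(y_0)}|\nabla h|^2\,dy$ and $D_4(y_0,r)=2\lambda\int_{B_r(y_0)}g^2\,dy$ are all non-negative since $\lambda>0$. Hence it suffices to prove $D_3(y_0,r)\geq -D_4(y_0,r)-\frac{r}{8(n+1)\lambda}H(y_0,r)$, because this gives $D(y_0,r)\geq -\frac{r}{8(n+1)\lambda}H(y_0,r)$ and then $N(y_0,r)=rD(y_0,r)/H(y_0,r)\geq -\frac{r^2}{8(n+1)\lambda}$.

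First I would bound the bulk integral $\int_{B_r(y_0)}h^2\,dy$ by the boundary quantity $H(y_0,r)$ using that $h$ is harmonic. Since $g$ and $h$ are functions of the $(n+1)$-dimensional variable $y=(x,x_{n+1})$, set $F(s)=s^{-n}H_2(y_0,s)=\int_{\partial B_1}h(y_0+s\omega)^2\,d\omega$; differentiating and using the divergence theorem together with $\triangle h=0$ (with $\nu$ the outer unit normal on $\partial B_s(y_0)$),
\[
F'(s)=2\int_{\partial B_1}h\,(\nabla h\cdot\omega)\,d\omega=\frac{2}{s^{n}}\int_{\partial B_s(y_0)}h\,(\nabla h\cdot\nu)\,d\sigma=\frac{2}{s^{n}}\int_{B_s(y_0)}|\nabla h|^2\,dy\ \geq\ 0 .
\]
Thus $s\mapsto s^{-n}H_2(y_0,s)$ is non-decreasing, so $H_2(y_0,s)\leq(s/r)^{n}H_2(y_0,r)$ for $0<s\leq r$, and integrating in $s$ from $0$ to $r$ gives $\int_{B_r(y_0)}h^2\,dy=\int_0^r H_2(y_0,s)\,ds\leq\frac{r}{n+1}H_2(y_0,r)\leq\frac{r}{n+1}H(y_0,r)$. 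On the other hand $\int_{B_r(y_0)}g^2\,dy=\frac{1}{2\lambda}D_4(y_0,r)$ directly from the definition.

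Next I would estimate the cross term by Cauchy--Schwarz followed by a weighted arithmetic--geometric mean inequality: for every $\delta>0$,
\[
D_3(y_0,r)\ \geq\ -\Big(\int_{B_r(y_0)}g^2\,dy\Big)^{1/2}\Big(\int_{B_r(y_0)}h^2\,dy\Big)^{1/2}\ \geq\ -\frac{\delta}{4\lambda}\,D_4(y_0,r)-\frac{r}{2(n+1)\delta}\,H(y_0,r) .
\]
Choosing $\delta=4\lambda$ yields precisely $D_3(y_0,r)\geq -D_4(y_0,r)-\frac{r}{8(n+1)\lambda}H(y_0,r)$, which completes the argument as explained above.

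The main (and in truth fairly mild) obstacle is the first step: passing from the solid integral $\int_{B_r}h^2$ to the surface integral $H(y_0,r)$. This is exactly where the harmonicity of $h$ is used, via the monotonicity of $s^{-n}H_2(y_0,s)$; note in particular the exponent $n$ (rather than $n-1$), because the extended functions $g,h$ live in $\mathbb{R}^{n+1}$, which is what produces the factor $n+1$ in the final constant. Everything else is routine, and the non-negative terms $D_1$ and $D_2$ are not even needed.
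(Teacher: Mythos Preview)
Your proof is correct and follows essentially the same route as the paper: bound the indefinite term $D_3$ against $D_4$ plus $\frac{r}{8(n+1)\lambda}H$ via Young's inequality (the paper applies $|gh|\le 2\lambda g^2+\frac{1}{8\lambda}h^2$ pointwise, while you go through Cauchy--Schwarz first, but this yields the identical constant), and control $\int_{B_r}h^2$ by $\frac{r}{n+1}H_2$ using harmonicity of $h$. The only cosmetic difference is that you supply the monotonicity argument for $s^{-n}H_2(y_0,s)$ explicitly, whereas the paper quotes this estimate from a reference.
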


\begin{proof}
Without loss of generality, we may assume that $y_0=0$. Then
\begin{equation*}
\int_{B_r(0)}ghdy\leq2\lambda\int_{B_r(0)}g^2dy+\frac{1}{8\lambda}\int_{B_r(0)}h^2dy.
\end{equation*}
Because $h$ is a harmonic function, we have
\begin{equation*}
\int_{B_r(0)}h^2dy\leq\frac{r}{n+1}\int_{\partial B_r(0)}h^2d\sigma,
\end{equation*}
which can be seen in $\cite{Q.Han and F.H.Lin}$.
So
\begin{eqnarray*}
N(0,r)&\geq& r\frac{\int_{B_r(0)}\left(|\nabla g|^2+|\nabla h|^2\right)dy-\frac{r}{8(n+1)\lambda}\int_{\partial B_r(0)}h^2d\sigma}{\int_{\partial B_r(0)}\left(g^2+h^2\right)d\sigma}
\\&\geq&-\frac{r^2}{8(n+1)\lambda},
\end{eqnarray*}
which is the desired result.
\end{proof}

Now we will show the monotonicity formula.

\begin{theorem}\label{monotonicity formula}
Let $\lambda>1$. Then there exist positive constants $C_0$ and $C$ depending only on $n$, such that if $N(y_0,r)\geq C_0$ then
\begin{equation}\label{lemma monotonicity formula}
\frac{d\ln N(y_0,r)}{dr}\geq-C
\end{equation}
for $r\leq1$.
\end{theorem}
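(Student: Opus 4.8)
The plan is to run the classical Almgren--Garofalo--Lin differentiation argument for the coupled pair $(g,h)$, carefully tracking the extra terms produced by the coupling $\triangle g=2\lambda g+h$ and by the zeroth order coefficient $2\lambda$. Without loss of generality take $y_0=0$. Since $N(0,r)\ge C_0>0$ and $N=rD/H$ with $r>0$, we automatically get $H(r)>0$ and, crucially, $D(r)>0$; this positivity is used throughout. First I would differentiate $H(r)=\int_{\partial B_r}(g^2+h^2)\,d\sigma$ by writing it as $r^{n}\int_{\partial B_1}(g^2+h^2)(r\omega)\,d\omega$ and using Green's identity together with $\triangle h=0$, $\triangle g=2\lambda g+h$, which gives $H'(r)=\frac{n}{r}H(r)+2D(r)$, so $\frac{H'}{H}=\frac{n}{r}+\frac{2D}{H}$.

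Next I would compute $D'(r)=\int_{\partial B_r}(|\nabla g|^2+|\nabla h|^2+gh+2\lambda g^2)\,d\sigma$ by applying the Rellich--Pohozaev identity to the gradient part (multiply the equation for $g$ by $y\cdot\nabla g$, the equation for $h$ by $y\cdot\nabla h$, integrate over $B_r$) and integrating by parts the term $-\tfrac2r\int_{B_r}(y\cdot\nabla g)(2\lambda g)\,dy$ explicitly. This produces an identity of the shape
\[
D'(r)=\frac{n-1}{r}D(r)+2\int_{\partial B_r}(g_n^2+h_n^2)\,d\sigma+R(r),
\]
where the error term $R(r)=\frac{2}{r}D_4(r)-\frac{n-1}{r}D_3(r)-\frac{2}{r}\int_{B_r}(y\cdot\nabla g)h\,dy+\int_{\partial B_r}gh\,d\sigma$ collects everything coming from the coupling and the zeroth order term. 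Combining with $\frac{H'}{H}$ and using $\ln N=\ln r+\ln D-\ln H$, the $1/r$ terms cancel and
\[
\frac{d}{dr}\ln N(0,r)=\frac{2\int_{\partial B_r}(g_n^2+h_n^2)\,d\sigma}{D(r)}-\frac{2D(r)}{H(r)}+\frac{R(r)}{D(r)}.
\]
Because $D(r)=\int_{\partial B_r}(gg_n+hh_n)\,d\sigma>0$, Cauchy--Schwarz on $\partial B_r$ (pairing $(g,h)$ with $(g_n,h_n)$ pointwise and then in $L^2(\partial B_r)$) gives $\int_{\partial B_r}(g_n^2+h_n^2)\,d\sigma\ge D(r)^2/H(r)$, so the first two terms on the right are $\ge 0$. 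It then remains to show $R(r)/D(r)\ge -C$ for $r\le1$.

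The control of $R(r)/D(r)$ is the heart of the matter, and the main obstacle. The term $\tfrac2r D_4(r)$ is nonnegative and can be dropped. For the other three terms the naive bounds $|D_3|\le \mathrm{const}\cdot D$ etc.\ fail, since the accompanying factor $1/r$ would blow up as $r\to0$; the point is to gain a genuine power of $r$. I would use three inputs: (i) the identity $\int_{B_r}g^2\,dy=D_4(r)/(2\lambda)$, which together with $\lambda>1$ bounds $\|g\|_{L^2(B_r)}$ in terms of $D_4$; (ii) the submean inequality $\int_{B_r}h^2\,dy\le\frac{r}{n+1}H_2(r)$ for the harmonic function $h$, as in Lemma~\ref{lower bound for the frequency}; and (iii) $H(r)\le rD(r)/C_0$, which is just $N(0,r)\ge C_0$. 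Inputs (i)--(iii) first yield the a priori bounds $D_1(r),D_4(r)\le 2D(r)$ (expand $D=D_1+D_2+D_3+D_4$, estimate $D_3$ by Young's inequality, and absorb the resulting $\int_{B_r}h^2$ via (ii)--(iii), provided $C_0$ exceeds an absolute constant). Then, for instance, $|D_3(r)|\le\|g\|_{L^2(B_r)}\|h\|_{L^2(B_r)}\le\bigl(D_4/(2\lambda)\bigr)^{1/2}\bigl(\tfrac{r}{n+1}H_2\bigr)^{1/2}$, so $\tfrac{n-1}{r}|D_3(r)|\le C(n)\,D(r)/\sqrt{\lambda C_0}$ with no surviving $1/r$; the term $\tfrac2r\int_{B_r}(y\cdot\nabla g)h\,dy$ is treated identically using $|y|\le r$ and $\|\nabla g\|_{L^2(B_r)}^2=D_1\le2D$; and $\bigl|\int_{\partial B_r}gh\,d\sigma\bigr|\le\tfrac12H(r)\le\tfrac{r}{2C_0}D(r)$.

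Putting these estimates together gives $R(r)/D(r)\ge -C(n)$ for $r\le1$, once $C_0=C_0(n)$ is fixed large enough (indeed $C_0$ may be taken to be an absolute constant, and then $C$ is determined by $n$), which finishes the proof. Besides the error estimate just described, the only other point requiring care is the correct derivation and bookkeeping of the Rellich--Pohozaev identity for the coupled system: $g$ and $h$ solve different equations, so the cross terms and the $2\lambda$-terms must be combined precisely to arrive at the stated form of $R(r)$.
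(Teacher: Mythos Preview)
Your argument is correct and follows the same Almgren--Rellich skeleton as the paper: differentiate $H$ and $D$, extract the main Rellich terms, use $D=\int_{\partial B_r}(gg_n+hh_n)$ together with Cauchy--Schwarz to make the leading combination nonnegative, and then show the leftover error is $O(D)$. The organization of the error step, however, is genuinely different. The paper rewrites every $D_i'(r)$ via the divergence theorem so that all four carry the same $\tfrac{\dim-2}{r}D_i$ coefficient; this produces cross terms involving $\int_{B_r}|g||\nabla h|$ and hence $\tfrac{1}{r}\int_{B_r}g^2$, which the paper controls by decomposing $g=\bar g+\underline g$ into a zero-boundary piece and a harmonic piece and invoking the submean inequality for $\underline g$. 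You instead keep $D_3'(r)=\int_{\partial B_r}gh$ and $D_4'(r)=2\lambda\int_{\partial B_r}g^2$ as they are, integrate the $2\lambda g$ part of the Rellich remainder by parts (so the boundary $g^2$ terms cancel), and arrive at an error $R(r)$ containing only $D_3$, $D_4$, $\int_{\partial B_r}gh$, and $\int_{B_r}(y\cdot\nabla g)h$. For these you exploit the identity $\int_{B_r}g^2=D_4/(2\lambda)$ together with $D_4\le C\,D$, which replaces the paper's harmonic decomposition entirely and yields the same conclusion with less machinery. The paper's route has the (minor) advantage that its bound on $\int_{B_r}g^2$ by $H$ does not use $\lambda>1$, whereas your route trades that for a cleaner bookkeeping of the cross terms.
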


\begin{proof}
Let $y_0=0$. Note that
\begin{equation*}
\frac{d\ln N(0,r)}{dr}=\frac{1}{r}+\sum\limits_{i=1}^4\frac{D_i'(0,r)}{D(0,r)}-\sum\limits_{j=1}^2\frac{H_j'(0,r)}{H(0,r)}.
\end{equation*}

First we calculate the term $D_1'(0,r)$.
\begin{eqnarray*}
D_1'(0,r)&=&\int_{\partial B_r(0)}|\nabla g|^2d\sigma
\\&=&\int_{\partial B_r(0)}|\nabla g|^2\cdot\frac{y}{r}\cdot\frac{y}{r}d\sigma
\\&=&\frac{1}{r}\int_{B_r(0)}div\left(|\nabla g|^2y\right)dy
\\&=&\frac{n}{r}\int_{B_r(0)}|\nabla g|^2dy+\frac{2}{r}\int_{B_r(0)}\nabla g\cdot\nabla^2g\cdot ydy
\\&=&\frac{n-2}{r}D_1(0,r)+2\int_{\partial B_r(0)}g_n^2d\sigma-\frac{2}{r}\int_{B_r}\left(h+2\lambda g\right)\nabla g\cdot ydy.
\end{eqnarray*}
Here $d\sigma$ means the $n-1$ dimensional Hausdorff measure on $\partial B_r(0)$.
Similarly, we have
\begin{equation*}
D_2'(0,r)=\frac{n-2}{r}D_2(0,r)+2\int_{\partial B_r(0)}h_n^2d\sigma.
\end{equation*}
For $D_3'(0,r)$ and $D_4'(0,r)$, we have
\begin{eqnarray*}
D_3'(0,r)&=&\int_{\partial B_r(0)}ghd\sigma=\frac{1}{r}\int_{B_r(0)}div(ghy)dy
\\&=&\frac{n-2}{r}D_3(0,r)+\frac{1}{r}\left(\int_{B_r(0)}h\nabla g\cdot ydy+\int_{B_r(0)}g\nabla h\cdot ydy+2\int_{B_r(0)}ghdy\right),
\end{eqnarray*}
and
\begin{eqnarray*}
D_4'(0,r)&=&2\lambda\int_{\partial B_r(0)}g^2d\sigma
\\&=&\frac{n-2}{r}D_4(0,r)+\frac{2\lambda}{r}\left(\int_{B_r(0)}2g\nabla g\cdot ydy+2\int_{B_r(0)}g^2dy\right).
\end{eqnarray*}
So
\begin{eqnarray*}
D'(0,r)&=&\frac{n-2}{r}D(0,r)+2\int_{\partial B_r(0)}\left(g_n^2+h_n^2\right)d\sigma
\\&-&\frac{1}{r}\int_{B_r(0)}h\nabla g\cdot ydy+\frac{1}{r}\int_{B_r(0)}g\nabla h\cdot ydy
\\&+&\frac{2}{r}\int_{B_r(0)}ghdy+\frac{4\lambda}{r}\int_{B_r(0)}g^2dy
\\&\geq&\frac{n-2}{r}D(0,r)+2\int_{\partial B_r(0)}\left(g_n^2+h_n^2\right)d\sigma
\\&-&\int_{B_r(0)}|h||\nabla g|dy-\int_{B_r(0)}|g||\nabla h|dy-\frac{1}{4r\lambda}\int_{B_r(0)}h^2dy.
\end{eqnarray*}
For the term $\int_{B_r(0)}|h||\nabla g|dy$ and $\int_{B_r(0)}|g||\nabla h|dy$, we have
\begin{equation*}
\int_{B_r(0)}|h||\nabla g|dy\leq\frac{r}{2}\int_{B_r(0)}|\nabla g|^2dy+\frac{1}{2r}\int_{B_r(0)}h^2dy,
\end{equation*}
and
\begin{equation*}
\int_{B_r(0)}|g||\nabla h|dy\leq\frac{r}{2}\int_{B_r(0)}|\nabla h|^2dy+\frac{1}{2r}\int_{B_r(0)}g^2dy.
\end{equation*}
Now we consider the term $\int_{B_r(0)}g^2dy$.
Note that $\triangle g=h+2\lambda g$, we can separate $g=\overline{g}+\underline{g}$, where $\overline{g}$ and $\underline{g}$ satisfy that
\begin{equation}
\begin{cases}
\triangle\overline{g}=h+2\lambda g\quad in\quad B_r(0),\\\nonumber
\overline{g}=0\quad on\quad \partial B_r(0),\nonumber
\end{cases}
\end{equation}
and
\begin{equation*}
\begin{cases}
\triangle\underline{g}=0\quad in\quad B_r(0),\\
\underline{g}=g\quad on\quad \partial B_r(0).
\end{cases}
\end{equation*}
Write the term $\int_{B_r(0)}g^2dy$ as follows:
\begin{equation*}
\int_{B_r(0)}g^2dy\leq2\int_{B_r(0)}\underline{g}^2dy+2\int_{B_r(0)}\overline{g}^2dy.
\end{equation*}
Now we consider $\overline{g}$ and $\underline{g}$.
Because
\begin{equation*}
\int_{B_r(0)}\triangle\overline{g}\cdot\overline{g}dy=\int_{B_r(0)}h\overline{g}dy+2\lambda\int_{B_r(0)}\overline{g}^2dy
+2\lambda\int_{B_r(0)}\overline{g}\underline{g}dy,
\end{equation*}
it holds that
\begin{eqnarray*}
2\lambda\int_{B_r(0)}\overline{g}^2dy&=&-2\lambda\int_{B_r(0)}\overline{g}\underline{g}dy-\int_{B_r(0)}h\overline{g}dy
+\int_{B_r(0)}\triangle\overline{g}\cdot\overline{g}dy
\\&\leq&4\lambda\int_{B_r(0)}\underline{g}^2dy+\lambda\int_{B_r(0)}\overline{g}^2dy+\frac{1}{2\lambda}\int_{B_r(0)}h^2dy
\\&+&\frac{\lambda}{2}\int_{B_r(0)}\overline{g}^2dy-\int_{B_r(0)}|\nabla\overline{g}|^2dy
\\&\leq&\frac{3\lambda}{2}\int_{B_r(0)}\overline{g}^2dy+4\lambda\int_{B_r(0)}\underline{g}^2dy
+\frac{1}{2\lambda}\int_{B_r(0)}h^2dy.
\end{eqnarray*}
So
\begin{equation*}
\int_{B_r(0)}\overline{g}^2dy\leq8\int_{B_r(0)}\underline{g}^2dy+\frac{1}{\lambda^2}\int_{B_r(0)}h^2dy.
\end{equation*}
Because $\underline{g}$ and $h$ both are harmonic functions, we have
\begin{equation*}
\int_{B_r(0)}\underline{g}^2dy\leq\frac{r}{n+1}\int_{\partial B_r(0)}g^2d\sigma,
\end{equation*}
and
\begin{equation*}
\int_{B_r(0)}h^2dy\leq\frac{r}{n+1}\int_{\partial B_r(0)}h^2d\sigma.
\end{equation*}
By the above arguments, we have that
\begin{eqnarray*}
D'(0,r)&\geq&\frac{n-2}{r}D(0,r)+2\int_{\partial B_r(0)}\left(g_n^2+h_n^2\right)d\sigma
\\&-&\frac{r}{2}\int_{B_r(0)}\left(|\nabla g|^2+|\nabla h|^2\right)dy-CH(0,r),
\end{eqnarray*}
where $C$ is a positive constant depending only on $n$.

On the other hand, from the assumption that $N(0,r)\geq C_0$, where $C_0$ is a constant to be determined, we have
\begin{equation*}
H(0,r)\leq r\frac{D(0,r)}{C_0}.
\end{equation*}
So
\begin{equation*}
D'(0,r)\geq\frac{n-2}{r}D(0,r)+2\int_{\partial B_r(0)}\left(g_n^2+h_n^2\right)dy-\frac{Cr}{C_0}D(0,r)-\frac{1}{2}\int_{B_r(0)}\left(|\nabla g|^2+|\nabla h|^2\right)dy.
\end{equation*}
Also note that
\begin{eqnarray*}
D(0,r)&=&\int_{B_r(0)}\left(|\nabla g|^2+|\nabla h|^2+hg+2\lambda g^2\right)dy
\\&\geq&\int_{B_r(0)}\left(|\nabla g|^2+|\nabla h|^2\right)dy-\frac{1}{8\lambda}\int_{B_r(0)}h^2dy
\\&\geq&\int_{B_r(0)}\left(|\nabla g|^2+|\nabla h|^2\right)dy-\frac{r}{8(n+1)\lambda}\int_{\partial B_r(0)}h^2dy
\\&\geq&\int_{B_r(0)}\left(|\nabla g|^2+|\nabla h|^2\right)dy-\frac{r}{8(n+1)\lambda}H(0,r)
\\&\geq&\int_{B_r(0)}\left(|\nabla g|^2+|\nabla h|^2\right)dy-\frac{1}{8(n+1)C_0\lambda}D(0,r).
\end{eqnarray*}
By choosing $C_0=1/(n+1)$, we have
\begin{equation*}
D(0,r)\geq\frac{8}{9}\int_{B_r(0)}\left(|\nabla g|^2+|\nabla h|^2\right)dy.
\end{equation*}
So
\begin{equation*}
D'(0,r)\geq\frac{n-2}{r}D(0,r)+2\int_{\partial B_r(0)}\left(g_n^2+h_n^2\right)d\sigma-CD(0,r).
\end{equation*}
By some direct calculation, we have
\begin{equation*}
H'(0,r)=\frac{n-2}{r}H(0,r)+2\int_{\partial B_r(0)}\left(hh_n+gg_n\right)d\sigma.
\end{equation*}
So
\begin{equation*}
\frac{d\ln N(0,r)}{dr}\geq-Cr,
\end{equation*}
which is just the result we want.
\end{proof}

From this ``monotonicity formula'', the following corollary can be got easily.

\begin{corollary}\label{frequency control}
Let $\lambda>1$. Then for any $0<r_1<r_2\leq 1$, we have
\begin{equation}\label{frequency control_big radius control small radius}
N(0,r_1)\leq C\max\left\{C_0, N(0,r_2)\right\}.
\end{equation}
Moreover, if for any $r\in(r_1,r_2)$, it holds that $N(0,r)>C_0$, then
\begin{equation}\label{frequency control_small radius control big radius}
N(0,r_2)\geq C'N(0,r_1).
\end{equation}
Here $C_0$ is the same constant as in Theorem $\ref{monotonicity formula}$; $C$ and $C'$ are positive constants depending only on $n$.
\end{corollary}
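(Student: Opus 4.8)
The plan is to read off both estimates from the differential inequality in Theorem~\ref{monotonicity formula} by elementary integration, together with a short bootstrap argument that localizes us to the set of radii on which $N$ lies above the threshold $C_0$ (which is where the monotonicity formula is valid). Write $C$ for the constant appearing in Theorem~\ref{monotonicity formula}. Throughout we use that $r\mapsto N(0,r)$ is continuous on $(0,1]$: indeed $D(0,\cdot)$ and $H(0,\cdot)$ are continuous, and $H(0,r)>0$ for every $r>0$ unless $g\equiv h\equiv 0$; moreover wherever $N(0,r)\ge C_0$ we have $N(0,r)>0$, so $\ln N(0,r)$ is well defined there.

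For \eqref{frequency control_small radius control big radius}, the hypothesis $N(0,r)>C_0$ for all $r\in(r_1,r_2)$ puts us in the range of validity of Theorem~\ref{monotonicity formula} on the entire interval, so integrating $\tfrac{d}{dr}\ln N(0,r)\ge -C$ from $r_1$ to $r_2$ and using $r_2-r_1\le 1$ gives $\ln N(0,r_2)\ge \ln N(0,r_1)-C$, i.e. $N(0,r_2)\ge e^{-C}N(0,r_1)$; this is the claim with $C'=e^{-C}$.

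For \eqref{frequency control_big radius control small radius} we distinguish cases according to whether $N$ can drop to $C_0$ between $r_1$ and $r_2$. If $N(0,r_1)\le C_0$ there is nothing to prove. Otherwise set $\rho=\sup\{r\in[r_1,r_2]:N(0,s)>C_0\ \text{for all}\ s\in[r_1,r]\}$; by continuity either $\rho=r_2$, or $\rho<r_2$ and $N(0,\rho)=C_0$. In the first case $N(0,\cdot)\ge C_0$ on all of $[r_1,r_2]$ and the integration above yields $N(0,r_1)\le e^{C}N(0,r_2)$. In the second case $N(0,\cdot)\ge C_0$ on $[r_1,\rho]$, and the same integration gives $N(0,r_1)\le e^{C}N(0,\rho)=e^{C}C_0$. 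In either case $N(0,r_1)\le e^{C}\max\{C_0,N(0,r_2)\}$, which is \eqref{frequency control_big radius control small radius}.

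I do not expect any real obstacle here: the content is entirely contained in Theorem~\ref{monotonicity formula}, and the only points needing a little care are (a) the continuity of $N(0,\cdot)$, used to guarantee $N(0,\rho)=C_0$ at the right end of the bootstrap interval, and (b) restricting the integration of the monotonicity inequality to sub-intervals where $N\ge C_0$, since that is the only place the inequality is known to hold. Once these are observed, the bound $r_2-r_1\le 1$ turns the integrated inequality directly into the two stated estimates.
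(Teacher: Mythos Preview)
Your argument is correct and is essentially the same as the paper's: both integrate the differential inequality of Theorem~\ref{monotonicity formula} over a maximal sub-interval on which $N(0,\cdot)\ge C_0$, distinguishing whether that interval reaches $r_2$ or terminates earlier at a point where $N$ drops to $C_0$. Your formulation via $\rho=\sup\{r:N(0,s)>C_0\text{ on }[r_1,r]\}$ is just a tidier packaging of the paper's decomposition of $\{N>C_0\}$ into open intervals $(a_i,b_i)$, and you are slightly more explicit about the trivial case $N(0,r_1)\le C_0$ and about the continuity of $N$.
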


\begin{proof}
Let $\mathcal{I}=\left\{r\in(0,r_2)|N(0,r)>C_0\right\}$. Then $\mathcal{I}$ is a union of at most countable number of intervals $(a_i,b_i)$, with $N(0,a_i)=C_0$, $N(0,b_i)=C_0$ if $b_i\neq r_2$.  If $r_1\in (a_i,b_i)$ with $b_i\neq r_2$, then from Theorem $\ref{monotonicity formula}$, it holds that
\begin{eqnarray*}
\ln\frac{N(0,b_i)}{N(0,r_1)}=\int_{r_1}^{b_i}d\ln N(0,\rho)\\&\geq&-C(r_2-r_1).
\end{eqnarray*}
So
\begin{equation*}
N(0,r_1)\leq C_0e^{C(r_2-r_1)}\leq CC_0.
\end{equation*}
If $r_1\in(a_i,b_i)$ with $B_i=r_2$, then by the same arguments, we have
\begin{equation}\label{integrate monotonicity formula}
\ln\frac{N(0,r_2)}{N(0,r_1)}=\int_{r_1}^{r_2}d\ln N(0,\rho)\geq-C(r_2-r_1),
\end{equation}
and thus
\begin{equation*}
N(0,r_1)\leq CN(0,r_2).
\end{equation*}
Then we get the first result, i.e., $(\ref{frequency control_big radius control small radius})$.

If for any $r\in(r_1,r_2)$, it holds that $N(0,r)>C_0$, then $r_1$ and $r_2$ must be in the same interval $(a_i,b_i)$. So from $(\ref{frequency control_big radius control small radius})$, the second result can be got directly.
\end{proof}

\section{Doubling conditions}

In this section, we show the doubling conditions based on the monotonicity formula. First we can  show the following doubling condition including both $g$ and $h$.

\begin{lemma}\label{doubling condition for both u and v}
Let $\lambda>1$. Then for any $0<r_1<r_2<1$, it holds that
\begin{equation}\label{first doubling condition}
\fint_{\partial B_{r_2}(y_0)}(g^2+h^2)d\sigma
\leq\left(\frac{r_2}{r_1}\right)^{C\max\left\{N(y_0,r_2), C_0\right\}}
\fint_{\partial B_{r_1}(y_0)}(g^2+h^2)d\sigma,
\end{equation}

\begin{equation}\label{second doubling condition}
\fint_{B_{r_2}(y_0)}(g^2+h^2)dy
\leq\left(\frac{r_2}{r_1}\right)^{C'\max\left\{N(y_0,r_2), C_0\right\}}
\fint_{B_{r_1}(y_0)}(g^2+h^2)dy,
\end{equation}
and
\begin{equation}\label{third doubling condition}
\fint_{\partial B_{r_2}(y_0)}(g^2+h^2)dy
\geq\left(\frac{r_2}{r_1}\right)^{\bar{C}\min\limits_{[r_1,r_2]}
N(y_0,r)}\fint_{\partial B_{r_1}(y_0)}(g^2+h^2)dy,
\end{equation}

\begin{equation}\label{fourth doubling condition}
\fint_{B_{r_2}(y_0)}(g^2+h^2)dy\geq
\left(\frac{r_2}{r_1}\right)^{\bar{C}'\min\limits_{[r_1,r_2]}N(y_0,r)}
\fint_{B_{r_1}(y_0)}(g^2+h^2)dy,
\end{equation}
where $C_0$ is the same constant as in Theorem $\ref{monotonicity formula}$; $C$, $C'$, $\bar{C}$ and $\bar{C}'$ are all positive constants depending only on $n$.
\end{lemma}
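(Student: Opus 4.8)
The plan is to reduce all four inequalities to integrating one first-order differential relation and then to invoke the frequency bounds already available. Combining the formula for $H'(y_0,r)$ obtained in the proof of Theorem~\ref{monotonicity formula} with the form \eqref{another form of frequency} of the frequency (so that $H'(y_0,r)/H(y_0,r)$ equals a geometric term plus $2N(y_0,r)/r$, the geometric term being cancelled by passing to the average), a direct computation gives
\begin{equation*}
\frac{d}{dr}\ln\Bigl(\fint_{\partial B_r(y_0)}(g^2+h^2)\,d\sigma\Bigr)=\frac{2\,N(y_0,r)}{r},\qquad 0<r<1 .
\end{equation*}
Integrating this identity between $r_1$ and $r_2$ is the engine behind everything that follows.

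For the surface upper bound \eqref{first doubling condition} I would feed in Corollary~\ref{frequency control}: for every $\rho\in(0,r_2]$ it gives $N(y_0,\rho)\le C\max\{C_0,N(y_0,r_2)\}$, whence
\begin{equation*}
\ln\frac{\fint_{\partial B_{r_2}(y_0)}(g^2+h^2)\,d\sigma}{\fint_{\partial B_{r_1}(y_0)}(g^2+h^2)\,d\sigma}=\int_{r_1}^{r_2}\frac{2\,N(y_0,\rho)}{\rho}\,d\rho\le 2C\max\{C_0,N(y_0,r_2)\}\ln\frac{r_2}{r_1},
\end{equation*}
and exponentiating gives \eqref{first doubling condition}. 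For the surface lower bound \eqref{third doubling condition} I would use only the trivial pointwise inequality $N(y_0,\rho)\ge\min_{[r_1,r_2]}N(y_0,\cdot)$ inside the integral, which yields the exponent $2\min_{[r_1,r_2]}N(y_0,r)$ at once; here nothing about the sign of the frequency is needed, while Lemma~\ref{lower bound for the frequency} shows the exponent is bounded below anyway, so the estimate is non-vacuous even at small frequency.

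For the solid balls \eqref{second doubling condition} and \eqref{fourth doubling condition} I would start from the coarea identity $\int_{B_r(y_0)}(g^2+h^2)\,dy=\int_0^r H(y_0,s)\,ds$, which makes the quantity $\Phi(y_0,r):=r\,H(y_0,r)/\!\int_0^r H(y_0,s)\,ds$ satisfy $\frac{d}{dr}\ln\bigl(\fint_{B_r(y_0)}(g^2+h^2)\,dy\bigr)=\frac{\Phi(y_0,r)-(n-1)}{r}$ and, after differentiating its definition, the logistic relation $\frac{d}{dr}\ln\Phi(y_0,r)=\frac{n-1+2N(y_0,r)-\Phi(y_0,r)}{r}$. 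For \eqref{second doubling condition}: inserting the bound $N(y_0,t)\le C\max\{C_0,N(y_0,r)\}$ for $t\le r$ (Corollary~\ref{frequency control}) into $\fint_{\partial B_s(y_0)}(g^2+h^2)\,d\sigma\ge(s/r)^{2C\max\{C_0,N(y_0,r)\}}\fint_{\partial B_r(y_0)}(g^2+h^2)\,d\sigma$ and integrating in $s$ shows $\int_0^r H(y_0,s)\,ds\ge c\,r\,H(y_0,r)/\max\{C_0,N(y_0,r)\}$, i.e. $\Phi(y_0,r)\le C'\max\{C_0,N(y_0,r)\}$; integrating $(\Phi-(n-1))/\rho$ and using Corollary~\ref{frequency control} once more then yields \eqref{second doubling condition}. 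For \eqref{fourth doubling condition} one uses that the logistic relation drives $\Phi(y_0,r)$ upward toward $n-1+2N(y_0,r)\ge n-1+2\min_{[r_1,r_2]}N$ on $[r_1,r_2]$, so that $\int_{r_1}^{r_2}\frac{\Phi-(n-1)}{\rho}\,d\rho\ge\bar C'\min_{[r_1,r_2]}N(y_0,r)\ln\frac{r_2}{r_1}$, the comparison being started off by Lemma~\ref{lower bound for the frequency} and the near-monotonicity of $r\mapsto\fint_{\partial B_r(y_0)}(g^2+h^2)\,d\sigma$ that it implies.

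The genuinely delicate step is this last one. For \eqref{second doubling condition} the inequality $\Phi\le C'\max\{C_0,N\}$ drops out cleanly from Corollary~\ref{frequency control}, but for \eqref{fourth doubling condition} one cannot bound $\Phi$ from below by $n-1+c\,N$ pointwise: when the frequency jumps abruptly, $\Phi(y_0,r)$ lags behind $n-1+2N(y_0,r)$, so one must instead control the integral of $(\Phi-(n-1))/\rho$ over the whole interval $[r_1,r_2]$ by following the logistic ODE. Everything else — in particular the two surface estimates \eqref{first doubling condition} and \eqref{third doubling condition} — follows immediately from the monotonicity machinery already established.
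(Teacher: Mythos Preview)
Your treatment of \eqref{first doubling condition} and \eqref{third doubling condition} is exactly the paper's: both of you write down the identity $\frac{d}{dr}\ln\bar H(r)=2N(y_0,r)/r$ for $\bar H(r)=\fint_{\partial B_r}(g^2+h^2)\,d\sigma$, then integrate, feeding in Corollary~\ref{frequency control} for the upper bound and the trivial inequality $N\ge\min_{[r_1,r_2]}N$ for the lower bound.

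For \eqref{second doubling condition} and \eqref{fourth doubling condition} the paper gives no argument beyond the sentence ``easily deduced from \eqref{first doubling condition} and \eqref{third doubling condition}'', whereas you develop an independent route through the auxiliary quantity $\Phi(r)=rH(r)/\int_0^r H(s)\,ds$. One correction: you are working in $\mathbb{R}^{n+1}$, so the dimensional constant should be $n+1$, not $n-1$; the identities are $\frac{d}{dr}\ln\fint_{B_r}=(\Phi-(n+1))/r$ and $\frac{d}{dr}\ln\Phi=(n+1+2N-\Phi)/r$. With that fix your derivation of \eqref{second doubling condition} is correct and self-contained, though more elaborate than the intended route (compare $\fint_{B_r}$ to $\bar H(r)$ using the near-monotonicity of $\bar H$ supplied by Lemma~\ref{lower bound for the frequency}, then apply \eqref{first doubling condition}).

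Your instinct that \eqref{fourth doubling condition} is the delicate one is correct, and the logistic heuristic you give does not close the argument. Indeed, consider the model situation $N\equiv 0$ on $(0,r_1)$ and $N\equiv m>0$ on $[r_1,r_2]$: then $\Phi(r_1)=n+1$ exactly, so $\frac{d}{dr}\ln\fint_{B_r}\big|_{r_1}=0$, while $\frac{d}{dr}(r/r_1)^{\bar C'm}\big|_{r_1}=\bar C'm/r_1>0$. Hence \eqref{fourth doubling condition} as stated actually fails for $r_2$ sufficiently close to $r_1$, for any fixed $\bar C'>0$; the lag of $\Phi$ behind $n+1+2N$ that you mention is a genuine obstruction, not just a technical nuisance. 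The paper only ever invokes \eqref{fourth doubling condition} with $r_2/r_1$ bounded away from $1$ (e.g.\ $r_2/r_1=7/5$ in Lemma~\ref{relation of two kind frequency}), and in that regime the estimate does hold --- it follows from \eqref{third doubling condition} together with $\fint_{B_{r_1}}\le C\bar H(r_1)$ (a consequence of Lemma~\ref{lower bound for the frequency}) and a short computation. So neither your sketch nor the paper's one-line deferral establishes \eqref{fourth doubling condition} in the generality claimed, but the fixed-ratio version is what is actually needed downstream.
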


\begin{proof}
We only need to prove $(\ref{first doubling condition})$ and $(\ref{third doubling condition})$. The inequality $(\ref{second doubling condition})$ and $(\ref{fourth doubling condition})$ can be easily deduced from $(\ref{first doubling condition})$ and $(\ref{third doubling condition})$.
Without loss of generality, we assume that $y_0$ is the origin. Define
\begin{equation}\label{H(r)}
\bar{H}(\rho)=\fint_{\partial B_{\rho}(0)}(g^2+h^2)d\sigma=\frac{1}{\omega_n\rho^n}\int_{\partial B_{\rho}}(g^2+h^2)d\sigma,
\end{equation}
where $\omega_n$ is the Hausdorff measure of an $n$ dimensional unit sphere.
It is easy to check that
\begin{equation}\label{dlnH(r)/dr}
\frac{d\ln\bar{H}(0,\rho)}{d\rho}=\frac{2N(0,\rho)}{\rho}.
\end{equation}
So from Theorem $\ref{monotonicity formula}$ and Corollary $\ref{frequency control}$, it holds that
\begin{eqnarray*}
\int_{r_1}^{r_2}\frac{2N(0,\rho)}{\rho}d\rho&\leq&\max\limits_{\rho\in[r_1,r_2]}\left\{N(0,\rho)\right\}\ln\frac{r_2}{r_1}
\\&\leq&C\max\left\{C_0,N(0,r_2)\right\}\ln\frac{r_2}{r_1}.
\end{eqnarray*}
So we have
\begin{equation*}
\bar{H}(r_2)\leq\left(\frac{r_2}{r_1}\right)^{C\max\left\{C_0,N(0,r_2)\right\}}\bar{H}(r_1).
\end{equation*}

On the other hand, it also holds that
\begin{equation*}
\int_{r_1}^{r_2}\frac{2N(0,\rho)}{\rho}d\rho
\geq\min\limits_{\rho\in[r_1,r_2]}\left\{N(0,\rho)\right\}\ln\frac{r_2}{r_1}.
\end{equation*}
So we also have
\begin{equation*}
\bar{H}(r_2)\geq\left(\frac{r_2}{r_1}\right)^{\min\limits_{\rho\in[r/2,r]}N(0,\rho)}\bar{H}(r_1).
\end{equation*}
Thus we get the desired result.
\end{proof}

Basing on this doubling condition, we can prove the following doubling condition only for $g$.

\begin{lemma}\label{doubling condition only for u or v}
Let $\lambda>1$ and $0<r<1$. Then
\begin{equation}\label{doubling condition only for u}
\fint_{B_r(y_0)}g^2dy\leq\left(\frac{1}{r^4}+\lambda^2\right)
2^{C\max\left\{C_0,N(y_0,r)\right\}}\fint_{B_{\frac{r}{2}}(y_0)}g^2dy,
\end{equation}
where $C$ is a positive constant depending only on $n$.
\end{lemma}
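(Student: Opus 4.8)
The plan is to deduce this inequality for $g$ alone from the combined doubling condition for $g^{2}+h^{2}$ proved in Lemma~\ref{doubling condition for both u and v}, absorbing the extra ``$h$-contribution'' by means of the equation; as in the earlier lemmas, $B_{r}(y_{0})$ is assumed to lie in the domain where $g$ and $h$ are defined.

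First, I would start from the trivial inequality $\int_{B_{r}(y_{0})}g^{2}\,dy\le\int_{B_{r}(y_{0})}(g^{2}+h^{2})\,dy$ and apply \eqref{second doubling condition} between the radii $r_{2}=r$ and $r_{1}=r/4$, which is legitimate since $0<r<1$ and $\lambda>1$. Passing from averages to integrals (the fixed volume ratio $4^{\,n+1}$ being absorbed into the exponential, which is harmless because $\max\{N(y_{0},r),C_{0}\}\ge C_{0}>0$), this yields
\[
\int_{B_{r}(y_{0})}g^{2}\,dy\;\le\;2^{C\max\{N(y_{0},r),C_{0}\}}\Bigl(\int_{B_{r/4}(y_{0})}g^{2}\,dy+\int_{B_{r/4}(y_{0})}h^{2}\,dy\Bigr).
\]
The decisive point is that the leftover term $\int_{B_{r/4}(y_{0})}h^{2}$ may now be controlled on the \emph{intermediate} ball $B_{r/2}(y_{0})$, rather than on $B_{r}(y_{0})$; this bit of slack is exactly what makes the argument close.

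To estimate $h$ on $B_{r/4}(y_{0})$ I would use that $h=\triangle g-2\lambda g$ and that, since $(\triangle-2\lambda)g=h$ and $\triangle h=0$, the function $g$ itself solves the fourth-order elliptic equation $\triangle^{2}g-2\lambda\triangle g=0$. From $h^{2}\le 2|\triangle g|^{2}+8\lambda^{2}g^{2}$ it then suffices to bound $\int_{B_{r/4}(y_{0})}|\triangle g|^{2}$, and a standard interior estimate for that equation gives
\[
\int_{B_{r/4}(y_{0})}|\triangle g|^{2}\,dy\;\le\;C\Bigl(\frac{1}{r^{4}}+\lambda^{2}\Bigr)\int_{B_{r/2}(y_{0})}g^{2}\,dy,
\]
and hence $\int_{B_{r/4}(y_{0})}h^{2}\,dy\le C(r^{-4}+\lambda^{2})\int_{B_{r/2}(y_{0})}g^{2}\,dy$. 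The cleanest way to obtain the last display is to rescale $B_{r/2}(y_{0})$ to the unit ball: the rescaled function $v$ solves $\triangle^{2}v=\mu\triangle v$ with $\mu=\lambda r^{2}/2$, the standard Caccioppoli iteration for this equation bounds $\|\triangle v\|_{L^{2}(B_{1/2})}^{2}$ by $C(1+\mu^{2})\|v\|_{L^{2}(B_{1})}^{2}$, and scaling back multiplies the constant by $(r/2)^{-4}$, so the overall constant is of order $(r/2)^{-4}(1+\mu^{2})=16r^{-4}+4\lambda^{2}\le C(r^{-4}+\lambda^{2})$.

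Finally I would combine the two displays: using $\int_{B_{r/4}(y_{0})}g^{2}\le\int_{B_{r/2}(y_{0})}g^{2}$ and $r^{-4}+\lambda^{2}>1$ (valid since $r<1$), one obtains $\int_{B_{r}(y_{0})}g^{2}\le C(r^{-4}+\lambda^{2})\,2^{C\max\{N(y_{0},r),C_{0}\}}\int_{B_{r/2}(y_{0})}g^{2}$, and dividing by $|B_{r/2}(y_{0})|$ --- once more absorbing a purely dimensional factor into the exponential --- gives the asserted inequality. The main obstacle, and the only genuinely non-routine point, is the interior estimate in the third step together with the precise dependence of its constant on $r$ and $\lambda$: one needs the $r^{-4}$ and the $\lambda^{2}$ to enter \emph{additively}, which forces the $\mu$-dependence of the normalized estimate to be at most quadratic; this is what the Caccioppoli iteration for $\triangle^{2}v-\mu\triangle v=0$ delivers, after which Young's inequality cleans up any cross terms. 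Everything else --- the single application of the combined doubling condition and the bookkeeping of averages, integrals and volume ratios --- is mechanical.
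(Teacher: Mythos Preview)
Your argument is correct and follows the same overall route as the paper: drop $h^{2}$ on the big ball, apply the combined doubling condition \eqref{second doubling condition} from $r$ down to $r/4$, and then control $\int_{B_{r/4}}h^{2}$ by $\int_{B_{r/2}}g^{2}$ with constant $C(r^{-4}+\lambda^{2})$. The only difference is how that last interior estimate is proved. The paper isolates it as Lemma~\ref{h^2 controled by g^2} and argues directly with the pair of second-order equations $\triangle g=h+2\lambda g$, $\triangle h=0$, testing against the exponential cutoff $\psi=e^{1-1/\phi}$ so as to allow arbitrary negative powers of $\phi$; you instead collapse the system to the single fourth-order equation $\triangle^{2}g=2\lambda\triangle g$ and run a standard Caccioppoli iteration after rescaling. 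Both routes give the same additive $r^{-4}+\lambda^{2}$ dependence, and both close the argument in the same way. Your version has the advantage of using only the most conventional cutoff machinery, while the paper's version keeps the equations in second-order form and exploits the harmonicity of $h$ directly.
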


In order to prove this doubling condition, we need the following interior estimation.
\begin{lemma}\label{h^2 controled by g^2}
Let $\lambda>1$. Then for $0<r<1/2$, we have
\begin{equation}\label{formula h^2 controled by g^2}
\int_{B_{\frac{r}{2}}(y_0)}h^2dy\leq C\left(\frac{1}{r^4}+\lambda^2\right)\int_{B_r(y_0)}g^2dy,
\end{equation}
where $C$ is a positive constant depending only on $n$.
\end{lemma}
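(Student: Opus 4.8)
The starting point is the pair of equations derived at the beginning of Section~2: in the $(n+1)$-dimensional variable $y$ the function $h$ is harmonic, $\triangle h=0$, while $g$ satisfies $\triangle g=h+2\lambda g$, so that $h=\triangle g-2\lambda g$. After a translation we may take $y_0=0$. The plan is to prove $(\ref{formula h^2 controled by g^2})$ by a Caccioppoli-type estimate for this coupled system combined with a hole-filling iteration over nested balls; the iteration is forced because a single cutoff supported in $B_r(0)$ cannot be chosen identically $1$ on all of $B_r(0)$, so the portion of $h^2$ living on the annulus $\{\nabla\eta\neq0\}$ has to be absorbed at a slightly larger radius.

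First I would fix radii $r/2\le s<t\le r$ and a cutoff $\eta\in C_c^\infty(B_{(s+t)/2}(0))$ with $\eta\equiv1$ on $B_s(0)$, $0\le\eta\le1$, $|\nabla\eta|\le C/(t-s)$ and $|\nabla^2\eta|\le C/(t-s)^2$. Testing $\triangle h=0$ against $\eta^4 g$, integrating by parts twice, replacing $\eta^4\triangle g$ by $\eta^4(h+2\lambda g)$, and performing one more integration by parts to move the gradient off $g$, one obtains the identity
\begin{equation*}
\int\eta^4 h^2\,dy=-2\lambda\int\eta^4 gh\,dy-\int gh\,\triangle(\eta^4)\,dy+24\int gh\,\eta^2|\nabla\eta|^2\,dy+8\int gh\,\eta^3\triangle\eta\,dy+8\int g\,\eta^3\nabla\eta\cdot\nabla h\,dy .
\end{equation*}
The crucial structural point is that after these manipulations no term carries $\nabla g$, and only the last term carries a derivative of $h$.

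Next I would estimate the right-hand side term by term. Since $h$ is harmonic, the Caccioppoli inequality gives $\int_{B_{(s+t)/2}(0)}|\nabla h|^2\le C(t-s)^{-2}\int_{B_t(0)}h^2$, so by Cauchy--Schwarz the last term is bounded by $C(t-s)^{-2}\big(\int_{B_t}g^2\big)^{1/2}\big(\int_{B_t}h^2\big)^{1/2}$. Using $|\triangle(\eta^4)|+|\nabla\eta|^2+|\triangle\eta|\le C(t-s)^{-2}$ and Young's inequality, the three middle terms are each bounded by $\delta\int_{B_t}h^2+C\delta^{-1}(t-s)^{-4}\int_{B_t}g^2$, while the first term is bounded by $\delta\int\eta^4 h^2+C\delta^{-1}\lambda^2\int_{B_t}g^2$ (note that this bound carries the factor $\lambda^2$ but no negative power of $t-s$). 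Choosing $\delta$ small, absorbing $\delta\int\eta^4 h^2$ into the left-hand side, and using $\int_{B_s}h^2\le\int\eta^4 h^2$, I arrive at the hole-filling inequality
\begin{equation*}
\int_{B_s(0)}h^2\,dy\le\frac12\int_{B_t(0)}h^2\,dy+\Big(\frac{C}{(t-s)^4}+C\lambda^2\Big)\int_{B_r(0)}g^2\,dy,\qquad \tfrac r2\le s<t\le r .
\end{equation*}
Applying the standard iteration lemma (Giaquinta's lemma, with $\theta=1/2$ and exponent $\alpha=4$) to $f(\rho)=\int_{B_\rho(0)}h^2\,dy$ on $[r/2,r]$ then yields $\int_{B_{r/2}(0)}h^2\le C\big((r/2)^{-4}+\lambda^2\big)\int_{B_r(0)}g^2\le C(r^{-4}+\lambda^2)\int_{B_r(0)}g^2$, which is $(\ref{formula h^2 controled by g^2})$.

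The main obstacle is the one already flagged: the cutoff cannot be identically $1$ on $B_r(0)$, so the estimate does not close in a single step and one must run the iteration with the two free radii $s<t$. A secondary, purely bookkeeping, difficulty is keeping the powers of $\lambda$ correct --- the zeroth-order contribution $2\lambda g$ has to be kept apart from the cutoff-derivative terms so that its net contribution after iteration is $C\lambda^2\int_{B_r}g^2$ rather than $C\lambda^2r^{-4}\int_{B_r}g^2$; and in variants of the argument that also invoke a Caccioppoli inequality for $g$, cross terms of the shape $\lambda(t-s)^{-2}$ appear but are harmless since $\lambda(t-s)^{-2}\le\frac12\big((t-s)^{-4}+\lambda^2\big)$.
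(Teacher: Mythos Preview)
Your argument is correct, but it takes a genuinely different route from the paper's. You use a standard power cutoff $\eta^4$ supported in $B_{(s+t)/2}$, test $\triangle h=0$ against $\eta^4 g$, and then run Giaquinta's hole-filling iteration on the resulting inequality $\int_{B_s}h^2\le\tfrac12\int_{B_t}h^2+\big(C(t-s)^{-4}+C\lambda^2\big)\int_{B_r}g^2$ to close. The paper instead avoids iteration altogether by a one-shot choice of test function: it takes a standard cutoff $\phi$ between $B_{r/2}$ and $B_r$, sets $\psi=e^{1-1/\phi}$, and tests the equation $\triangle g=h+2\lambda g$ against $h\psi$. The point of the exponential weight is that $\psi/\phi^k$ is bounded for every $k$, so the ``bad'' factors $\phi^{-2},\phi^{-3},\phi^{-4},\phi^{-8}$ that appear when one differentiates $\psi$ are harmless, and a single application of Young's inequality (together with the same Caccioppoli bound for $\nabla h$ you use) already gives $\int h^2\psi\le C(r^{-4}+\lambda^2)\int g^2\psi/\phi^8\le C(r^{-4}+\lambda^2)\int_{B_r}g^2$.

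Your approach has the advantage of relying only on textbook tools (power cutoffs and the standard iteration lemma), at the cost of introducing the two free radii $s<t$ and citing the lemma. The paper's approach is self-contained and closes in one step, but it requires the less commonly seen exponential cutoff trick. Both keep the $\lambda^2$ contribution separate from the $r^{-4}$ contribution in exactly the way you flag as necessary.
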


\begin{proof}
Without loss of generality, let $y_0=0$. Let $\phi$ be the cut-off function satisfying that
$$
\phi=1\quad in \quad B_{\frac{r}{2}};\quad
\phi=0\quad outside \quad B_{r};$$$$
0\leq\phi\leq1;\quad
|\nabla\phi|\leq c/r;\quad $$
and
$$\quad
|\nabla^2\phi|\leq c/r^2,
$$
where $c>1$ is a positive constant depending only on $n$. Define a test function $\psi$ as follows:
\begin{eqnarray}\label{definition of test function}
\psi &=&e^{1-\frac{1}{\phi}}\in(0,1)\quad if\quad \phi>0,\\\nonumber
\psi &=&0 \quad if\quad\phi=0.
\end{eqnarray}
Then, up to a limit, it also holds that
\begin{equation}\label{property of test function}
\frac{\psi}{\phi^k}=0\quad if \quad \phi=0
\end{equation}
for any positive constant $k$.
From the equation $\triangle g=h+2\lambda g$, we have
\begin{equation}\label{equation times test function}
\int_{B_r(0)}\triangle gh\psi dy=\int_{B_r(0)}h^2\psi dy+2\lambda\int_{B_r(0)}gh\psi dy.
\end{equation}
First calculate the left hand side.
\begin{eqnarray*}
\int_{B_r(0)}\triangle gh\psi dy&=&\int_{B_r(0)}g\triangle(h\psi)dy
\\&=&\int_{B_r(0)}g\triangle h\psi dy+2\int_{B_r(0)}g\nabla h\nabla \psi dy
+\int_{B_r(0)}gh\triangle\psi dy.
\end{eqnarray*}
Because
\begin{eqnarray*}
\triangle h&=&0,\\
\nabla \psi&=&\psi\cdot\frac{\nabla\phi}{\phi^2},\\
\triangle \psi&=&\psi\left(\frac{|\nabla\phi|^2}{\phi^4}
-\frac{2|\nabla\phi|^2}{\phi^3}+\frac{\triangle\phi}{\phi^2}\right),
\end{eqnarray*}
we have
\begin{equation*}
\int_{B_r(0)}\triangle gh\psi dy=
2\int_{B_r(0)}g\psi\frac{1}{\phi^2}\nabla h\cdot\nabla\phi dy
+\int_{B_r(0)}gh\psi\left(\frac{|\nabla\phi|^2}{\phi^4}-\frac{2|\nabla\phi|^2}{\phi^3}
+\frac{\triangle\phi}{\phi^2}\right)dy.
\end{equation*}

From $(\ref{equation times test function})$, we have
\begin{eqnarray*}
\int_{B_r(0)}h^2\psi dy&=&\int_{B_r(0)}\triangle gh\psi dy-2\lambda\int_{B_r(0)}gh\psi dy
\\&\leq&2\int_{B_r(0)}g\psi \frac{1}{\phi^2}\nabla h\nabla\phi dy
+\int_{B_r(0)}gh\psi \left(\frac{|\nabla\phi|^2}{\phi^4}-\frac{2|\nabla\phi|^2}{\phi^3}
+\frac{\triangle\phi}{\phi^2}\right)dy
\\&+&2\lambda^2\int_{B_r(0)}g^2\psi dy+\frac{1}{2}\int_{B_r(0)}h^2\psi dy.
\end{eqnarray*}
Because $\triangle h=0$, we have
\begin{equation*}
\int_{B_r(0)}\triangle hh\psi\phi^4dy=0.
\end{equation*}
Thus
\begin{equation}\label{another midel case}
\int_{B_r(0)}|\nabla h|^2\psi\phi^4dy+\int_{B_r(0)}h\nabla h\cdot\nabla\phi \psi\left(\phi^2+4\phi^3\right)dy=0.
\end{equation}
So
\begin{eqnarray}\label{midel case}
\int_{B_r(0)}|\nabla h|^2\psi\phi^4dy&=&-\int_{B_r(0)}h\nabla h\nabla\phi \psi\left(\phi^2+4\phi^3\right)dy
\\\nonumber
&\leq&
5\int_{B_r(0)}h|\nabla h||\nabla\phi| \psi\phi^2dy
\\&\leq&\frac{1}{2}\int_{B_r(0)}|\nabla h|^2\psi\phi^4dy
+18\int_{B_r(0)}h^2\psi|\nabla\phi|^2dy.
\end{eqnarray}
Thus
\begin{equation*}
\int_{B_r(0)}|\nabla h|^2\psi\phi^4dy\leq\frac{36c^2}{r^2}\int_{B_r(0)}h^2\psi dy.
\end{equation*}
So
\begin{eqnarray*}
\int_{B_r(0)}g\nabla h\nabla\phi \psi\frac{1}{\phi^2}dy
&\leq&
\frac{r^2}{144c^2}\int_{B_r(0)}|\nabla h|^2\psi\phi^4dy
+\frac{36c^2}{r^2}\int_{B_r(0)}g^2\psi\frac{1}{\phi^8}|\nabla\phi|^2dy
\\&\leq&
\frac{1}{4}\int_{B_r(0)}h^2\psi dy
+\frac{36c^4}{r^4}\int_{B_r(0)}g^2\psi\frac{1}{\phi^8}dy.
\end{eqnarray*}

On the other hand, we also have
\begin{equation*}
2\lambda\int_{B_r(0)}gh\psi\leq\frac{1}{8}\int_{B_r(0)}h^2\psi dy
+8\lambda^2\int_{B_r(0)}g^2\psi dy,
\end{equation*}
and
\begin{eqnarray*}
\int_{B_r(0)}gh\psi\left(\frac{|\nabla\phi|^2}{\phi^4}-2\frac{|\nabla\phi|^2}{\phi^3}
+\frac{\triangle\phi}{\phi^2}\right)dy
&\leq&\frac{4c^2}{r^2}\int_{B_r(0)}|g||h|\psi\frac{1}{\phi^4}dy
\\&\leq&
\frac{1}{16}\int_{B_r(0)}h^2\psi dy
+\frac{64c^4}{r^4}\int_{B_r(0)}h^2\psi\frac{1}{\phi^8}dy.
\end{eqnarray*}
Put these inequalities into $(\ref{another midel case})$, it holds that
\begin{equation*}
\frac{1}{16}\int_{B_r(0)}h^2\psi dy\leq10\lambda^2\int_{B_r(0)}g^2\psi dy
+\frac{100c^4}{r^4}\int_{B_r(0)}g^2\psi\frac{1}{\phi^8}dy.
\end{equation*}
From $(\ref{definition of test function})$ and $(\ref{property of test function})$, we know that $0\leq\psi\leq1$ when $0\leq\phi\leq1$, $\psi=1$ when $\phi=1$, $\psi=0$ when $\phi=0$, and $\psi/\phi^k=0$ when $\phi=0$ for any positive integer $k$. So we have
\begin{equation*}
\int_{B_\frac{r}{2}(0)}h^2dy\leq C\left(\lambda^2+\frac{1}{r^4}\right)\int_{B_r(0)}g^2dy,
\end{equation*}
where $C$ is a positive constant depending only on $c$ and $n$, and thus only on $n$. That is just the desired result.
\end{proof}

Now we will present the proof of Lemma $\ref{doubling condition only for u or v}$.

\textbf{Proof of Lemma $\ref{doubling condition only for u or v}$:}

Without loss of generality, we assume that $y_0$ is the origin. Then by using Lemma $\ref{h^2 controled by g^2}$ and Lemma $\ref{doubling condition for both u and v}$, we have
\begin{eqnarray*}
\int_{B_r(0)}g^2dy&\leq&\int_{B_r(0)}(g^2+h^2)dy
\\&\leq&
2^{C\max\left\{N(0,r),C_0\right\}}\int_{B_{\frac{r}{4}}(0)}(g^2+h^2)dy
\\&\leq&2^{C\max\left\{N(0,r),C_0\right\}}\left(\lambda^2+\frac{1}{r^4}\right)\int_{B_{\frac{r}{2}}(0)}g^2dy,
\end{eqnarray*}
which is the desired result. \qed

\begin{remark}\label{remark for doubling condition only for u with any radius}
Similarly, it is easy to check that the following doubling condition also holds:
\begin{equation}\label{doubling condition only for u with any radius}
\int_{B_{r_2}(0)}g^2dy\leq \left(\frac{r_2}{r_1}\right)^{C\max(N(0,r_2),C_0)}
\left(\lambda^2+\frac{1}{(r_2-r_1)^4}\right)\int_{B_{r_1}(0)}g^2dy,
\end{equation}
where $C$ is a positive constant depending only on $n$ and $0<r_1<r_2<1$.
In fact, in the proof of Lemma $\ref{h^2 controled by g^2}$,
if we change $r$ and $r/2$ into $r_2$ and $r_1$, we can get that
\begin{equation*}
\int_{B_{r_1}(0)}h^2dy\leq C\left(\lambda^2+\frac{1}{(r_2-r_1)^4}\right)\int_{B_{r_2}(0)}g^2dy,
\end{equation*}
where $C$ is a positive constant depending only on $n$. Then by using the same arguments in the proof of Lemma $\ref{doubling condition only for u or v}$, we can get the desired result.
\end{remark}

At the end of this section, we will give a ``changing center'' property for the frequency function, which can be proved by using the above doubling conditions.

\begin{theorem}\label{frequency change center}
Let $\lambda>1$, and $0<r_0<1$.
Then for any $p\in B_{r_0/4}(0)$, we have
\begin{equation}\label{changing center property}
N(p,\rho)\leq C\max\left\{N(0,r_0),C_0\right\},
\end{equation}
if $\rho\leq\frac{1}{2}(r_0-|p|)$, where $C$ is a positive constant depending only on $n$.
\end{theorem}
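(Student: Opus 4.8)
The plan is to reduce the whole statement, via Corollary \ref{frequency control}, to bounding the frequency $N(p,\cdot)$ at a \emph{single} radius, and then to bound that single value by a doubling index which is transferred from the center $p$ to the center $0$. Assume $\lambda>1$, take $y_0=0$, and set $R:=\tfrac12(r_0-|p|)$, the largest radius allowed in the statement; note $R<1$ and $2R=r_0-|p|<r_0<1$. For $\rho\le R$ we have $N(p,\rho)\le C\max\{C_0,N(p,R)\}$ by $(\ref{frequency control_big radius control small radius})$ (applied at center $p$ with $r_1=\rho$, $r_2=R$), so it is enough to bound $N(p,R)$. Letting $\rho^\ast\in[R,2R]$ realize $m:=\min_{[R,2R]}N(p,r)$ and applying $(\ref{frequency control_big radius control small radius})$ once more with $r_1=R$, $r_2=\rho^\ast$, we further get $N(p,R)\le C\max\{C_0,m\}$, so everything comes down to estimating $m$.

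To estimate $m$ I would use the concentric volume doubling estimate $(\ref{fourth doubling condition})$ with center $p$, $r_1=R$, $r_2=2R$: after absorbing the fixed dimensional factor $|B_R|/|B_{2R}|=2^{-(n+1)}$ coming from the averages, it gives
\[
2^{\bar C' m}\ \le\ \frac{\int_{B_{2R}(p)}(g^2+h^2)\,dy}{\int_{B_{R}(p)}(g^2+h^2)\,dy}.
\]
The point of the hypothesis $p\in B_{r_0/4}(0)$ is that the two balls centered at $p$ sit between two balls centered at $0$ with a \emph{fixed} radius ratio: on the one hand $B_{2R}(p)\subseteq B_{r_0}(0)$ since $2R+|p|=r_0$, and on the other hand $B_{r_0/8}(0)\subseteq B_{R}(p)$ since $R-|p|=\tfrac12 r_0-\tfrac32|p|\ge\tfrac18 r_0$. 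Hence the right-hand ratio is bounded by $\int_{B_{r_0}(0)}(g^2+h^2)\,dy\,/\,\int_{B_{r_0/8}(0)}(g^2+h^2)\,dy$, and this in turn, by the concentric doubling estimate $(\ref{second doubling condition})$ with $r_2=r_0$, $r_1=r_0/8$ (so $r_2/r_1=8$), is at most $C\,8^{C'\max\{N(0,r_0),C_0\}}$, the constant $C$ accounting for the volume factor $|B_{r_0}|/|B_{r_0/8}|=8^{n+1}$. Taking $\log_2$ yields $m\le C\max\{N(0,r_0),C_0\}$ with $C=C(n)$, and feeding this back through the two reductions of the first paragraph gives $N(p,\rho)\le C\max\{N(0,r_0),C_0\}$ for all $\rho\le R$, which is the assertion.

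I do not expect a serious obstacle; the argument is essentially bookkeeping built on the monotonicity formula (through Corollary \ref{frequency control}) and the doubling conditions of Lemma \ref{doubling condition for both u and v}. The one thing to watch is keeping every radius ratio a \emph{fixed} number: the inclusion $B_{r_0/8}(0)\subseteq B_R(p)$ degenerates as $p$ approaches $\partial B_{r_0}(0)$, which is exactly why the range of $p$ must be restricted (here to $B_{r_0/4}(0)$), and one must also carry along the dimensional constants $2^{n+1}$, $8^{n+1}$ from the volume normalizations and the small negative floor for $N$ from Lemma \ref{lower bound for the frequency} (harmless, since $\max\{N(0,r_0),C_0\}\ge C_0>0$ absorbs it).
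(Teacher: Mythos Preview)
Your proposal is correct and follows essentially the same route as the paper: reduce via Corollary~\ref{frequency control} to a single scale, use the ball inclusions $B_{2R}(p)\subseteq B_{r_0}(0)$ and $B_{r_0/8}(0)\subseteq B_R(p)$ to transfer the doubling ratio from center $p$ to center $0$, and then read off the bound from the doubling conditions of Lemma~\ref{doubling condition for both u and v}. Your packaging is in fact a bit cleaner, since you invoke the already-proved volume doubling inequalities $(\ref{second doubling condition})$ and $(\ref{fourth doubling condition})$ directly, whereas the paper works with sphere averages and partially re-derives the analogous estimates inside the proof.
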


\begin{proof}

If for some $r\geq\frac{1}{2}(r_0-|p|)$, it holds that $N(p,r)\leq C_0$, then from Corollary $\ref{frequency control}$, we can get that
\begin{equation*}
N(p,\rho)\leq CN(p,r)\leq CC_0,
\end{equation*}
which is the desired result. So in the following of the proof, we always assume that $N(0,r)>C_0$ for any $r\geq\frac{1}{2}(r_0-|p|)$.

For any $p\in B_{\frac{1}{4}r_0}$, it holds that
\begin{equation}\label{containning relationship of balls}
\bar{B}_{\frac{3}{4}r_0}(p)\subseteq\bar{B}_{r_0},\quad \bar{B}_{\frac{1}{8}r_0}\subseteq \bar{B}_{\frac{1}{2}r_0}(p).
\end{equation}
Thus from Lemma $\ref{doubling condition for both u and v}$, we have
\begin{eqnarray}\label{change ball inequality}
\fint_{B_{\frac{3}{4}r_0}(p)}(g^2+h^2)dy&\leq&\left(\frac{4}{3}\right)^{n+1}\fint_{B_{r_0}}(g^2+h^2)dy \nonumber
\\&\leq&4^{C\max\left\{C_0, N(0,r_0)\right\}}\fint_{B_{\frac{1}{8}r_0}}(g^2+h^2)dy \nonumber
\\&\leq&4^{C\max\left\{C_0, N(0,r_0)\right\}}\fint_{B_{\frac{1}{2}r_0}(p)}(g^2+h^2)dx.
\end{eqnarray}
Now we will show that
\begin{equation}\label{key claim for this lemma}
\fint_{\partial B_{\frac{5}{8}r_0}(p)}(g^2+h^2)dy\leq4^{C\max\left\{C_0,N(0,r_0)\right\}}\fint_{\partial B_{\frac{1}{2}r_0}(p)}(g^2+h^2)dx.
\end{equation}
In fact,
\begin{eqnarray*}
\int_{B_{\frac{3}{4}r_0}(p)}(g^2+h^2)dy&\geq&\int_{B_{\frac{3}{4}r_0}(p)-B_{\frac{5}{8}r_0}(p)}(g^2+h^2)dy
\\&=&\int_{\frac{5}{8}r_0}^{\frac{3}{4}r_0}\omega_nr^n\fint_{\partial B_r(p)}(g^2+h^2)d\sigma dr.
\end{eqnarray*}
Because
\begin{eqnarray*}
\ln\frac{\fint_{\partial B_r(p)}(g^2+h^2)d\sigma}{\fint_{\partial B_{\frac{5}{8}r_0}(p)}(g^2+h^2)d\sigma}
&=&2\int_{\frac{5}{8}r_0}^r\frac{N(p,\rho)}{\rho}d\rho
\\&\geq&2C_0\ln\frac{r}{\frac{5}{8}r_0}.
\end{eqnarray*}
So
\begin{eqnarray*}
\int_{B_{\frac{3}{4}r_0}(p)}(g^2+h^2)dy&\geq&\int_{\frac{5}{8}r_0}^{\frac{3}{4}r_0}\omega r^n\frac{8r}{5r_0}e^{2C_0}dr\fint_{\partial B_{\frac{5}{8}r_0}(p)}(g^2+h^2)d\sigma
\\&=&Cr_0^{n+1}\fint_{\partial B_{\frac{5}{8}r_0}(p)}(g^2+h^2)d\sigma,
\end{eqnarray*}
which means that
\begin{equation}\label{inequality 1 for key inequality}
\fint_{B_{\frac{3}{4}r_0}(p)}(g^2+h^2)dy\geq C\fint_{\partial B_{\frac{5}{8}r_0}(p)}(g^2+h^2)d\sigma.
\end{equation}

On the other hand,
\begin{eqnarray*}
\int_{B_{\frac{1}{2}r_0}(p)}(g^2+h^2)dy=\int_0^{\frac{1}{2}r_0}\omega_nr^n\fint_{\partial B_r(p)}(g^2+h^2)d\sigma dr.
\end{eqnarray*}
Because
\begin{eqnarray*}
\ln\frac{\fint_{\partial B_{\frac{1}{2}r_0}(p)}(g^2+h^2)d\sigma}{\fint_{\partial B_r(p)}(g^2+h^2)d\sigma}
&=&\int_r^{\frac{1}{2}r_0}\frac{2N(p,\rho)}{\rho}d\rho
\\&\geq&-C\frac{(\frac{1}{2}r_0-r)^3}{\lambda}.
\end{eqnarray*}
Thus
\begin{eqnarray*}
\fint_{\partial B_r(p)}(g^2+h^2)d\sigma&\leq&2^{\frac{C}{\lambda}(\frac{1}{2}r_0-r)^3}\fint_{\partial B_{\frac{1}{2}r_0}(p)}(g^2+h^2)d\sigma
\\&\leq&2^{C\frac{C}{\lambda}r_0^3}\fint_{\partial B_{\frac{1}{2}r_0}(p)}(g^2+h^2)d\sigma.
\end{eqnarray*}
So
\begin{equation}\label{inequality 2 for key inequality}
\fint_{B_{\frac{1}{2}r_0}(p)}(g^2+h^2)dy\leq C\fint_{\partial B_{\frac{1}{2}r_0}(p)}(g^2+h^2)d\sigma.
\end{equation}
It also holds that
\begin{eqnarray}\label{inequality 3 for key inequality}
\int_{B_{\frac{3}{4}r_0}(p)}(g^2+h^2)dy&\leq&\int_{B_r(0)}(g^2+h^2)dy
\\\nonumber
&\leq&2^{C\max\left\{C_0,N(0,r)\right\}}\int_{B_{\frac{1}{8}r_0}(0)}(g^2+h^2)dy
\\\nonumber
&\leq&2^{C\max\left\{C_0,N(0,r)\right\}}\int_{B_{\frac{1}{2}r_0}(p)}(g^2+h^2)dy.
\end{eqnarray}
From $(\ref{inequality 1 for key inequality})$, $(\ref{inequality 2 for key inequality})$ and $(\ref{inequality 3 for key inequality})$, we can get the inequality $(\ref{key claim for this lemma})$ directly.

Note that
\begin{equation*}
\frac{d}{dr}\ln\fint_{\partial B_r(p)}(g^2+h^2)d\sigma=\frac{2N(p,r)}{r},
\end{equation*}
we have
\begin{eqnarray*}
\ln\frac{\fint_{B_{\frac{5}{8}r_0}(p)}(g^2+h^2)d\sigma}{\fint_{B_{\frac{1}{2}r_0}(p)}(g^2+h^2)d\sigma}
&=&\int_{\frac{1}{2}r_0}^{\frac{5}{8}r_0}\frac{N(p,\rho)}{\rho}d\rho
\\&\geq&CN(p,\frac{1}{2}r_0).
\end{eqnarray*}
Thus from $(\ref{key claim for this lemma})$, we have
\begin{equation*}
N(p,\frac{1}{2}r_0)\leq C\max\left\{C_0,N(0,\frac{5}{8}r_0)\right\},
\end{equation*}
which is the desired result.
\end{proof}

\section{Measure estimates of the nodal sets away from non-analytic parts}

In this sectioin, we will give a measure estimate of the nodal set for an eigenfunction $u$ in a subset of $\Omega$ away from $\Gamma$, the non-analytic part of $\partial\Omega$. We divide our task into two steps. First we will show the measure upper bounds for the nodal set of $u$ in some small balls in terms of the frequency function. Then we will give the upper bound for the frequency function.
\subsection{Nodal set in a ball}

In order to get an upper bound of the measure of the nodal set of an eigenfunction $u$ in some small ball, we need to estimate an upper bound of the $L^{\infty}$ norm of $u$.

\begin{lemma}\label{upper bound for u}
Let $u$ and $v$ satisfy
\begin{eqnarray*}
\triangle u+\lambda u&=&v,\\
\triangle v-\lambda v&=&0,
\end{eqnarray*}
in $B_1(0)$, and assume $\lambda>1$. Then, for any $r\in(0,1)$ and $t>1$ and $tr<1$, we have
\begin{equation}
\|u\|_{L^{\infty}(B_r(0))}\leq
C\frac{\lambda^{\frac{n+2}{4}}}{r^{\frac{n}{2}+1}}(\|u\|_{L^2(B_{tr}(0))}+\|v\|_{L^2(B_{tr}(0))}),
\end{equation}
where $C$ is a positive constant depending only on $n$ and $t$.
\end{lemma}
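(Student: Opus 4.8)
The plan is to reduce the estimate to standard interior elliptic bounds, treating $v$ first — it solves the ``well-signed'' equation $\triangle v=\lambda v$ — and then $u$, while keeping careful track of the two parameters $\lambda>1$ and $r<1$.

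\emph{Step 1 (bounding $v$).} Since $\triangle v=\lambda v$, Kato's inequality gives $\triangle v_{\pm}\geq\lambda v_{\pm}\geq 0$, so $v_{+}$, $v_{-}$, and hence $|v|$, are subharmonic in $B_{1}(0)$. The sub-mean-value inequality together with Cauchy--Schwarz then gives, for $0<\rho<\rho'<1$,
\[
\|v\|_{L^{\infty}(B_{\rho}(0))}\leq C(n)\,(\rho'-\rho)^{-n/2}\,\|v\|_{L^{2}(B_{\rho'}(0))},
\]
\emph{with no dependence on $\lambda$.} This is the easy half, and it is exactly what lets $v$ enter the equation for $u$ as a bounded forcing term.

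\emph{Step 2 (bounding $u$).} Rewrite the first equation as $\triangle u=-\lambda u+v$ in $B_{1}(0)$: a Poisson-type equation whose zeroth-order coefficient has the large size $\lambda$, so the natural length scale on which that term is harmless is $\sim\lambda^{-1/2}$. I would invoke the classical interior $L^{2}$--$L^{\infty}$ estimate for $\triangle w=cw+f$ with $|c|\leq\lambda$, which the usual Moser/De Giorgi iteration (a Caccioppoli inequality with a cut-off between two concentric balls, followed by Sobolev, iterated over $L^{p_{k}}$) supplies in the form
\[
\|w\|_{L^{\infty}(B_{\rho})}\leq C(n)\,(\rho'-\rho)^{-n/2}\bigl(1+\lambda(\rho'-\rho)^{2}\bigr)^{n/4}\Bigl(\|w\|_{L^{2}(B_{\rho'})}+(\rho'-\rho)^{2}\|f\|_{L^{\infty}(B_{\rho'})}\Bigr);
\]
the only new feature compared with the harmonic case is the extra term $\lambda\int w^{2}\phi^{2}$ in the Caccioppoli inequality (from the coefficient) and the forcing term $\int|f|\,|w|\,\phi^{2}$, both of which flow through the iteration and produce precisely the factor $\bigl(1+\lambda(\rho'-\rho)^{2}\bigr)^{n/4}$. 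Applying this with $w=u$, $c=-\lambda$, $f=v$, with $\rho=r$ and $\rho'$ a fixed fraction of the way from $r$ to $tr$, and then inserting the Step 1 bound for $\|v\|_{L^{\infty}}$ over the remaining annulus, yields a bound of the shape $C(n,t)\bigl(r^{-n/2}+\lambda^{n/4}\bigr)\bigl(\|u\|_{L^{2}(B_{tr}(0))}+\|v\|_{L^{2}(B_{tr}(0))}\bigr)$. Since $\lambda>1$ and $r<1$, this is absorbed, with room to spare, into $C(n,t)\,\lambda^{(n+2)/4}r^{-(n/2+1)}\bigl(\|u\|_{L^{2}(B_{tr}(0))}+\|v\|_{L^{2}(B_{tr}(0))}\bigr)$, which is the claimed inequality; the dependence of $C$ on $t$ enters through the cut-off gradient $\sim((t-1)r)^{-1}$ and the separation of the nested radii.

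\emph{Main obstacle.} The substantive difficulty is the large zeroth-order term $\lambda u$, which must be controlled without ever reintroducing $\|u\|_{L^{\infty}}$ on the right-hand side. The Moser iteration handles this automatically by keeping all intermediate quantities in integral ($L^{p}$) form. A more elementary route — representing $u$ as a harmonic function plus the Newtonian potential of $-\lambda u+v$, and iterating over a chain of $\sim\sqrt{\lambda}$ radii with increments $\sim\lambda^{-1/2}$ so that the coefficient of $\|u\|_{L^{\infty}}$ is at most $\tfrac12$ at each step — also works, but then one must either feed back the a priori finiteness of $\|u\|_{L^{\infty}(B_{tr})}$ together with the geometric factor $2^{-N}$ coming from the chain, or recast the chain entirely in $L^{2}$; I expect this closing step to be the only delicate point. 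In every version, controlling the zeroth-order term costs the factor $\lambda^{n/4}$ — the reciprocal of the natural length scale raised to the power $n/2$ — and the stated exponent $(n+2)/4$ simply carries harmless slack.
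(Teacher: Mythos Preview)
Your argument is correct, but the route differs from the paper's. The paper does not use Moser iteration or the subharmonicity of $|v|$; instead it iterates the $L^{2}$ interior estimate
\[
\|u\|_{W^{k,2}(B_r)}\leq C\Bigl(\tfrac{\sqrt{\lambda}}{r}\,\|u\|_{W^{k-1,2}(B_{\kappa r})}+\|v\|_{W^{k-1,2}(B_{\kappa r})}\Bigr),
\qquad
\|v\|_{W^{k,2}(B_r)}\leq C\Bigl(\tfrac{\sqrt{\lambda}}{r}\Bigr)^{k}\|v\|_{L^{2}(B_{\kappa^{k} r})},
\]
obtained from one Caccioppoli step for $\triangle u=-\lambda u+v$ (respectively $\triangle v=\lambda v$) and then differentiating the equation; unwinding $k=[n/2]+1$ times and applying the Sobolev embedding $W^{k,2}\hookrightarrow L^{\infty}$ gives exactly the stated power $(\sqrt{\lambda}/r)^{k}\sim\lambda^{(n+2)/4}r^{-(n/2+1)}$. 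Your approach trades derivatives for integrability: the subharmonicity observation for $|v|$ (via Kato) dispenses with $\lambda$ altogether in Step~1, and the De Giorgi--Moser machine in Step~2 produces the sharper factor $(\rho'-\rho)^{-n/2}(1+\lambda(\rho'-\rho)^{2})^{n/4}$, which indeed is dominated by the stated bound with slack. The paper's proof is shorter and stays entirely in $L^{2}$; yours is more robust (no smoothness of coefficients needed beyond what Moser requires) and in fact yields a better exponent, though that gain is not used downstream.
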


\begin{proof}
From the equation $$\triangle u+\lambda u=v,$$ the standard interior estimates of elliptic equations, we have that,
\begin{eqnarray*}
\|u\|_{W^{k,2}(B_r(0))}
&\leq&C\left(\frac{\sqrt{\lambda}}{r}\|u\|_{W^{k-1,2}B_{\kappa r}(0)}+\|v\|_{W^{k-1,2}B_{\kappa r}(0)}\right)
\end{eqnarray*}
for any positive integer $k>0$, where $\kappa>1$  and $C$ is a positive constant depending only on $n$ and $\kappa$.
On the other hand, from the standard elliptic estimation and $\triangle v=\lambda v$, it also holds that
\begin{equation*}
\|v\|_{W^{k,2}(B_{r}(0))}\leq C\left(\frac{\sqrt{\lambda}}{r}\right)^{k}\|v\|_{L^2(B_{\kappa^kr}(0))},
\end{equation*}
where $C>0$ depends only on $n$ and $\kappa$.
From the above inequalities, we have that
\begin{eqnarray*}
\|u\|_{W^{k,2}(B_r(0))}
&\leq&C\left(\frac{\sqrt{\lambda}}{r}\|u\|_{W^{k-1,2}(B_{\kappa r}(0))}+\|v\|_{W^{k-1,2}(B_{\kappa r}(0))}\right)
\\&\leq&C\left(\left(\frac{\sqrt{\lambda}}{r}\right)^2\|u\|_{W^{k-2,2}(B_{\kappa^2 r}(0))}+\frac{\sqrt{\lambda}}{r}\|v\|_{W^{k-2}(B_{\kappa^2 r}(0))}+\|v\|_{W^{k-1,2}(B_{\kappa r}(0))}\right)
\\&\leq&C\left(\left(\frac{\sqrt{\lambda}}{r}\right)^k\|u\|_{L^2(B_{\kappa^kr}(0))}
+\sum\limits_{j=0}^{k-1}\left(\frac{\sqrt{\lambda}}{r}\right)^j\|v\|_{W^{k-1-j,2}(B_{\kappa^jr}(0))}\right)
\\&\leq&C\left(\left(\frac{\sqrt{\lambda}}{r}\right)^{k}\|u\|_{L^2(B_{\kappa^kr}(0))}
+k\left(\frac{\sqrt{\lambda}}{r}\right)^{k-1}\|v\|_{L^2(B_{\kappa^kr}(0))}\right)
\\&\leq&Ck\left(\frac{\sqrt{\lambda}}{r}\right)^k\left(\|u\|_{L^2(B_{\kappa^kr}(0))+\|v\|_{L^2(B_{\kappa^k}(0))}}\right).
\end{eqnarray*}
Then from the Sobolve's imbedding theorem and let $t=\kappa^k$ for $k=[n/2]+1$, we can get the desired result.
\end{proof}

\begin{remark}\label{remark1}
From the relationship of $u$ and $g$, and the doubling condition for $g$,
it is easy to check that for any $0<r_1<r_2<\sqrt{2}/2$, it holds that
\begin{equation}\label{doubling condition from g to u}
\int_{B_{r_2}(x_0)}u^2dx\leq\left(\frac{\sqrt{2}r_2}{r_1}\right)
^{C\max\left\{N(y_0,\sqrt{2}r_2),C_0\right\}+\sqrt{\lambda}r_2-\ln(r_2-r_1)}
\int_{B_{r_1}(x_0)}u^2dx,
\end{equation}
where $y_0=(x_0,0)$ and $C$ is a positive constant depending only on $n$. That is because $B_{r_2}(x_0)\times(-r_2,r_2)\subseteq B_{\sqrt{2}r_2}(y_0)$,
$B_{r_1}(y_0)\subseteq B_{r_1}(x_0)\times(-r_1,r_1)$, and the following estimates:
\begin{eqnarray*}
\int_{B_{r_2}(x_0)}u^2dx&=&\frac{2\sqrt{\lambda}
e^{2\sqrt{\lambda}r_2}}{e^{4\sqrt{\lambda}r_2}-1}\int_{B_{r_2}(x_0)\times(-r_2,r_2)}g^2dy
\\&\leq&\frac{2\sqrt{\lambda}e^{2\sqrt{\lambda}r_2}}{e^{4\sqrt{\lambda}r_2}-1}\int_{B_{\sqrt{2}r_2}(y_0)}g^2dy
\\&\leq&\frac{2\sqrt{\lambda}e^{2\sqrt{\lambda}r_2}}{e^{4\sqrt{\lambda}r_2}-1}
\left(\frac{\sqrt{2}r_2}{r_1}\right)^{C\max\left\{N(y_0,\sqrt{2}r_2),C_0\right\}}
\left(\lambda^2+\frac{1}{(\sqrt{2}r_2-r_1)^4}\right)\int_{B_{r_1}(y_0)}g^2dy
\\&\leq&\frac{2\sqrt{\lambda}e^{2\sqrt{\lambda}r_2}}{e^{4\sqrt{\lambda}r_2}-1}
\left(\frac{\sqrt{2}r_2}{r_1}\right)^{C\max\left\{N(y_0,\sqrt{2}r_2),C_0\right\}}
\left(\lambda^2+\frac{1}{(\sqrt{2}r_2-r_1)^4}\right)
\frac{e^{2\sqrt{\lambda}r_1}-1}{2\sqrt{\lambda}e^{2\sqrt{\lambda}r_1}}
\int_{B_{r_1}(x_0)}u^2dx
\\&\leq&\frac{e^{2\sqrt{\lambda}(r_2+r_1)}}{4\sqrt{\lambda}r_2}
\left(\frac{\sqrt{2}r_2}{r_1}\right)^{C\max\left\{N(y_0,\sqrt{2}r_2),C_0\right\}}
\left(\lambda^2+\frac{1}{(r_2-r_1)^4}\right)\int_{B_{r_1}(x_0)}u^2dx
\\
&\leq&\left(\frac{\sqrt{2}r_2}{r_1}\right)
^{C\left(\max\left\{N(y_0,\sqrt{2}r_2),C_0\right\}+\sqrt{\lambda}r_2+\ln\lambda-\ln(r_2-r_1)\right)}
\int_{B_{r_1}(x_0)}u^2dx.
\end{eqnarray*}
\end{remark}

Basing on the monotonicity formula and the doubling conditions in Remark $\ref{remark1}$, we can get an upper bound for the Hausdorff measure of the nodal set of $u$ in some small ball.

\begin{theorem}\label{measure estimate of nodal set of the case L=tirangle}
Let $\lambda>1$ and $u$ be an eigenfunction of $B_{r_0}(0)$ with $\lambda^2$ the corresponding eigenvalue and $r_0<1$. Then we have the following estimate of the $(n-1)$ dimensional Hausdorff measure of the nodal set of $u$ in $B_{\frac{1}{16}r}(0)$ with a fixed $0<r<r_0/4$:
\begin{equation}\label{measure estimate of nodal set}
\mathcal{H}^{n-1}\left(\left\{x:u(x)=0\right\}\cap B_{\frac{1}{16}r}(0)\right)
\leq C\left(\max\left\{N(0,r_0),C_0\right\}+\ln\lambda+\sqrt{\lambda}r-\ln r\right)r^{n-1},
\end{equation}
where  $C$ is a positive constant depending only on $n$.
\end{theorem}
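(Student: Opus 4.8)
The plan is to reduce the measure estimate for the nodal set of $u$ in a small ball to a growth/doubling estimate and then apply a standard complexification argument. Concretely, I would first pass from the eigenfunction $u$ on $B_{r_0}(0)\subseteq\mathbb{R}^n$ to the harmonic-like pair $(g,h)$ on $\mathbb{R}^{n+1}$ introduced in Section~2, where $\triangle h=0$ and $(\triangle-2\lambda)g=h$. Since the zero set of $u$ in $B_\rho(x_0)$ is contained in the slice $x_{n+1}=0$ of the zero set of $g$ in $B_{\sqrt2\rho}((x_0,0))$, it suffices to bound $\mathcal H^{n-1}$ of $\{g=0\}$ in a slightly larger $(n+1)$-dimensional ball; and by the coarea/projection inequality this in turn is controlled (up to dimensional constants) by $\mathcal H^{n}(\{g=0\}\cap B)$ for an $(n+1)$-ball $B$. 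So the real target is a measure bound for the nodal set of the function $g$, which is not harmonic but satisfies a subelliptic-type identity with the harmonic companion $h$.

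The key quantitative input is a \emph{doubling index} bound for $g$ (equivalently for $u$): from Remark~\ref{remark1}, for $0<r_1<r_2<\sqrt2/2$ one has
\begin{equation*}
\int_{B_{r_2}(x_0)}u^2\,dx\leq\Big(\tfrac{\sqrt2 r_2}{r_1}\Big)^{C\left(\max\{N(y_0,\sqrt2 r_2),C_0\}+\sqrt\lambda r_2+\ln\lambda-\ln(r_2-r_1)\right)}\int_{B_{r_1}(x_0)}u^2\,dx.
\end{equation*}
Choosing $r_2\asymp r$ and $r_1\asymp r$ with a fixed ratio, this says the $L^2$-doubling index of $u$ on balls of radius $\asymp r$ is at most $M:=C\big(\max\{N(0,r_0),C_0\}+\ln\lambda+\sqrt\lambda r-\ln r\big)$, after invoking Theorem~\ref{frequency change center} to replace $N(y_0,\cdot)$ by $N(0,r_0)$ up to a dimensional constant. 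The plan is then to combine this with the $L^\infty$ bound of Lemma~\ref{upper bound for u}, which gives $\|u\|_{L^\infty(B_{\rho}(x_0))}\leq C\lambda^{(n+2)/4}\rho^{-(n/2+1)}\big(\|u\|_{L^2(B_{t\rho})}+\|v\|_{L^2(B_{t\rho})}\big)$; since Lemma~\ref{h^2 controled by g^2} controls $\|v\|_{L^2}$ (hence $\|h\|_{L^2}$) by $\|u\|_{L^2}$ up to a factor $(\lambda^2+\rho^{-4})^{1/2}$, one obtains, on a ball of radius $\asymp r$, a growth bound of the form $\sup_{B_{\beta r}}|u|\leq e^{CM}\,\fint_{B_{\alpha r}}|u|$ with $\alpha<\beta$, i.e. a genuine frequency/growth bound with exponent $\asymp M$.

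From here I would run the classical Donnelly--Fefferman / Hardt--Simon machinery: complexify. Using the growth bound together with analyticity in a complex neighborhood (available because, after freezing $\lambda$, $g$ and $h$ solve an analytic-coefficient elliptic system in the interior, so $u$ extends holomorphically to a fixed complex polydisc of radius $\asymp r$ around $B_{\beta r}(x_0)$ with $\sup$ there bounded by $e^{CM}\fint_{B_{\alpha r}}|u|$), Jensen's formula on complex lines shows that along any real line the restriction of $u$ to $B_{r/16}(x_0)$ vanishes at most $CM$ times. Integrating this one-dimensional count over the Grassmannian of line directions (the integral-geometric formula for $\mathcal H^{n-1}$) yields $\mathcal H^{n-1}(\{u=0\}\cap B_{r/16}(x_0))\leq C M r^{n-1}$, which is exactly \eqref{measure estimate of nodal set}. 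The scaling $r^{n-1}$ comes from rescaling $B_{r/16}$ to the unit ball, under which the $\lambda$ in the equation becomes $r^2\lambda$ and the frequency/doubling data transform consistently; one must check the complex-analytic extension radius and the constant in Lemma~\ref{upper bound for u} survive this rescaling, which they do since $r<1$.

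The main obstacle I anticipate is establishing the holomorphic extension with the right quantitative control: $g$ itself is not harmonic, so one cannot directly invoke harmonic-function estimates, and the coupling term $h$ and the factor $2\lambda g$ must be carried through the Cauchy–Kovalevskaya / elliptic-analyticity estimates without losing the exponential-in-$M$ bound or introducing uncontrolled powers of $\lambda$. The device that makes this work is precisely that after the substitution $g=u e^{\sqrt\lambda x_{n+1}}$, $h=ve^{\sqrt\lambda x_{n+1}}$ the \emph{pair} $(g,h)$ satisfies a constant-coefficient (hence entire-analytic) system, so the radius of the complex neighborhood is bounded below by an absolute fraction of $r$ and the sup of the extension is bounded by the real sup times a dimensional constant; feeding in the growth bound from the previous paragraph then closes the estimate. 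The bookkeeping of how $\ln\lambda$ and $\sqrt\lambda r$ enter $M$ — tracking them through Lemma~\ref{h^2 controled by g^2}, Remark~\ref{remark1}, and Lemma~\ref{upper bound for u} — is the other place where care is needed, but it is routine once the structure above is set up.
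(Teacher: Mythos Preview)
Your outline is correct and follows the same broad strategy as the paper --- obtain a growth/doubling bound with exponent $M=C\big(\max\{N(0,r_0),C_0\}+\ln\lambda+\sqrt\lambda\,r-\ln r\big)$, use complexification to bound the number of zeros on each real line by $CM$, and then apply the integral-geometric formula --- but you take an unnecessary detour. The paper never passes to the nodal set of $g$ in $\mathbb R^{n+1}$; it works directly with $u$ in $\mathbb R^n$. Concretely, after normalizing $\fint_{B_{\sqrt2 r/2}(0)}u^2=1$, the paper uses Remark~\ref{remark1} and Theorem~\ref{frequency change center} exactly as you suggest to find, for $n$ points $p_i$ on the coordinate axes at distance $r/4$ from the origin, points $x_{p_i}\in\overline{B_{r/16}(p_i)}$ with $|u(x_{p_i})|\ge 2^{-CM}$; combines this with the $L^\infty$ bound $\|u\|_{L^\infty(B_r(0))}\le 2^{CM}$ coming from Lemma~\ref{upper bound for u} and Lemma~\ref{h^2 controled by g^2}; and then invokes a packaged zero-counting lemma (Lemma~2.3.2 of Han--Lin) on each line through $x_{p_i}$ before summing via integral geometry.

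Working with $u$ rather than $g$ removes the obstacle you flag: $u$ solves the constant-coefficient equation $\triangle^2 u=\lambda^2 u$, so its holomorphic extension to a complex polydisc of radius $\asymp r$ with sup controlled by the real sup is immediate, and there is no coupling term to carry through Cauchy--Kovalevskaya. Your coarea/projection step from $\mathcal H^{n-1}(\{u=0\})$ to $\mathcal H^{n}(\{g=0\})$ is valid (since $\{g=0\}=\{u=0\}\times\mathbb R$ is a cylinder) but buys nothing, and the subsequent concern about extending the non-harmonic $g$ is self-inflicted. Drop the lift for the nodal-set step and the argument is exactly the paper's.
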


\begin{proof}
Without loss of generality, we may assume that
$$
\fint_{B_{\sqrt{2}r/2}(0)}u^2dx=1.
$$
Then from Remark $\ref{remark1}$ and Theorem $\ref{frequency change center}$,
and note that for any $p\in\overline{B_{r/4}(0)}$,
$$\overline{B_{r/16}(p)}\subseteq \overline{B_{5r/16}(0)}\subseteq \overline{B_{\sqrt{2}r/2}(0)},\quad
\overline{B_{r/16}(0)}\subseteq \overline{B_{5r/16}(p)},$$
it holds that,
\begin{eqnarray*}
\fint_{B_{\frac{r}{16}}(p)}u^2dx&\geq&2^{-C\left(\max\left\{N((p,0),5\sqrt{2}r/16),C_0\right\}+\sqrt{\lambda}r-\ln r\right)}
\fint_{B_{\frac{5r}{16}}(p)}u^2dx\\&\geq&2^{-C\left(\max\left\{N((p,0),\sqrt{2}r/16),C_0\right\}+\sqrt{\lambda}r-\ln r\right)}
\fint_{B_{\frac{r}{16}}(0)}u^2dx\\&\geq&2^{-C\left(\max\left\{N((0,0),\frac{5\sqrt{2}+2}{8}r),C_0\right\}
+\sqrt{\lambda}r-\ln r\right)}
\fint_{B_{\frac{r}{16}}(0)}u^2dx\\&\geq&2^{-C\left(\max\left\{N((0,0),\frac{5\sqrt{2}+2}{8}r),C_0\right\}
+\sqrt{\lambda}r-\ln r\right)}2^{-C\left(\max\left\{N((0,0),r),C_0\right\}+\sqrt{\lambda}r-\ln r\right)}\fint_{B_{\frac{\sqrt{2}r}{2}}(0)}u^2dx
\\&\geq&
2^{-C(\max\left\{N(0,r_0),C_0\right\}+\sqrt{\lambda}r-\ln r)}.
\end{eqnarray*}
Here we have used the doubling condition in the first and fourth inequalities, and the ``changing center property'' for the frequency function in the third inequality.

The above inequality shows that, for any $p\in\overline{B_{r/4}(0)}$, there exists some point $x_p\in \overline{B_{r/16}(p)}$, such that
\begin{equation}\label{lower bound for upx}
|u(x_p)|\geq2^{-C(\max\left\{N(0,r_0),C_0\right\}+\sqrt{\lambda}r-\ln r)}.
\end{equation}
Choose $p_i\in\partial B_{r/4}(0)$, $i=1,2,\cdots, n$ on the $i-$th axis. Then from $(\ref{lower bound for upx})$ we get that there exist points $x_{p_i}\in B_{\frac{r}{16}}(p_i)$, $i=1,2,\cdots, n$ satisfy $(\ref{lower bound for upx})$. On the other hand, from Lemma $\ref{upper bound for u}$ and Lemma $\ref{h^2 controled by g^2}$, we have
\begin{eqnarray*}
\|u\|_{L^{\infty}(B_{r}(0))}
&\leq&C\frac{\lambda^{\frac{n+2}{4}}}{r^{\frac{n}{2}+1}}(\|u\|_{L^2(B_{5r/4}(0))}+\|v\|_{L^2(B_{5r/4}(0))})
\\&\leq&C\frac{\lambda^{\frac{n+2}{4}}}{r^{\frac{n}{2}+1}}(\|u\|_{L^2(B_{5r/4}(0))}
+2^{C\sqrt{\lambda}r}\|h\|_{L^2(B_{5\sqrt{2}r/4}(0))})
\\&\leq&C\frac{\lambda^{\frac{n+2}{4}}}{r^{\frac{n}{2}+1}}
(\|u\|_{L^2(B_{5r/4}(0))}+2^{C(\sqrt{\lambda}r+\ln\lambda-\ln r)}\|g\|_{L^2(B_{2r}(0))})
\\&\leq&C\frac{\lambda^{\frac{n+2}{4}}}{r^{\frac{n}{2}+1}}
(\|u\|_{L^2(B_{5r/4}(0))}+2^{C(\sqrt{\lambda}r+\ln\lambda-\ln r)}\|u\|_{L^2(B_{2\sqrt{2}r}(0))})
\\&\leq&2^{C(\max\left\{N(0,4r),C_0\right\}+\sqrt{\lambda}r+\ln\lambda-\ln r)}\left(\fint_{B_{\sqrt{2}r/2}(0)}u^2dx\right)^{\frac{1}{2}}.
\end{eqnarray*}
In the above estimates, we have used $\lambda>1$, the assumption that $4r<r_0$ and the monotonicity formula for the frequency function.
Thus we arrive at
\begin{equation}\label{upper bound for u in B1/2}
\|u\|_{L^{\infty}(B_{r}(0))}\leq 2^{C(\max\left\{N(0,r_0),C_0\right\}+\ln\lambda+\sqrt{\lambda}r-\ln r)}.
\end{equation}
Let
\begin{equation*}
f_i(\omega,t)=u(x_{p_i}+t\omega),\quad \omega\in\mathbb{S}^{n-1},\quad t\in(-\frac{5}{8}r,\frac{5}{8}r),\quad i=1,2,\cdots, n,
\end{equation*}
where $\mathbb{S}^{n-1}$ means the unit sphere on $\mathbb{R}^n$. Then $x_{p_i}+t\omega\subseteq B_r(0)$. Then from
$(\ref{upper bound for u in B1/2})$,
we have that
$$
|f_i(\omega,t)|\leq\|u\|_{L^{\infty}(B_{r}(0))}\leq 2^{C(\max\left\{N(0,r_0),C_0\right\}+\ln\lambda+\sqrt{\lambda}r-\ln r)}.
$$
On the other hand,
from $(\ref{lower bound for upx})$, we have
$$
|f_i(\omega,0)|=|u(x_{p_i})|\geq2^{-C(\max\left\{N(0,r_0),C_0\right\}+\sqrt{\lambda}r-\ln r)}.
$$
Then from Lemma $2.3.2$ in $\cite{Q.Han and F.H.Lin}$,
we have that
\begin{equation}\label{upper bound of nodal set on one dimension}
\mathcal{H}^0\left(\left\{t\in(-\frac{5}{8}r,\frac{5}{8}r):u(x_{p_j}+t\omega)=0\right\}\right)\leq C\left(\left\{N(0,r_0),C_0\right\}+\ln\lambda+\sqrt{\lambda}r-\ln r\right).
\end{equation}
Thus from the integral geometric formula (see $\cite{F.H.Lin and X.P.Yang}$, $\cite{Q.Han and F.H.Lin}$), and the
fact that $$B_{r/16}(0)\subseteq\cap_{i=i}^nB_{5r/8}(x_{p_i}),$$ we have
\begin{equation}
\mathcal{H}^{n-1}\left(\left\{x\in B_{\frac{1}{16}r}(0):u(x)=0\right\}\right)
\leq C\left(\left\{N(0,r_0),C_0\right\}+\ln\lambda+\sqrt{\lambda}r-\ln r\right)r^{n-1},
\end{equation}
which is the desired result.
\end{proof}

\subsection{Upper bound for the frequency function}

We first give the definition of the ``doubling index'', which is borrowed from $\cite{A.Logunov and E.malinnikova}$ and $\cite{A.Logunov}$  and will be used to help us to give an upper bound for the frequency function.
\begin{definition}\label{definition of doubling index}
For $B_r(y_0)\subseteq\Omega\times\mathbb{R}$, define
\begin{equation}\label{another definition of frequency function}
\bar{N}(y_0,r)=\log_2\frac{\max\limits_{B_r(y_0)}|g|}{\max\limits_{B_{\frac{r}{2}}(y_0)}|g|}.
\end{equation}
\end{definition}

We now give the relationship between the frequency function and the doubling index.

\begin{lemma}\label{relation of two kind frequency}
Let $y_0=(x_0,0)$, $B_{4r}(x_0)\subseteq\Omega$. Assume that $\lambda>1$ is large enough such that $\log_2\lambda\geq C_0$ and $C_0$ is the constant in Theorem $\ref{monotonicity formula}$. Then it holds that
\begin{equation}
\bar{N}(y_0,r)\leq C\left(\ln\lambda-\ln r+N(y_0,2r)\right),
\end{equation}
and
\begin{equation}
N(y_0,\frac{r}{2})\leq C'\left(\ln\lambda-\ln r+\bar{N}(y_0,r)\right),
\end{equation}
where $C$ and $C'$ are positive constants depending only on $n$. Here $$g(y)=g(x,x_{n+1})=u(x)e^{\sqrt{\lambda}x_{n+1}}$$
is defined in Section 2.
\end{lemma}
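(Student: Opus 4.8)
The plan is to establish the two inequalities separately, each by comparing the $L^2$-averages that define $N$ with the $L^\infty$-quantities that define $\bar N$, and then invoking the doubling conditions from Section 3 to convert $L^2$-averages at one radius into $L^2$-averages at another.

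For the first inequality $\bar N(y_0,r)\le C(\ln\lambda-\ln r+N(y_0,2r))$, I would start from the definition $\bar N(y_0,r)=\log_2\big(\max_{B_r(y_0)}|g|/\max_{B_{r/2}(y_0)}|g|\big)$. The numerator I bound from above by an $L^2$-quantity using an interior sup estimate for $g$: since $\triangle g=h+2\lambda g$ and $\triangle h=0$, the elliptic estimates (together with Lemma \ref{h^2 controled by g^2} to absorb $\|h\|_{L^2}$ into $\|g\|_{L^2}$) give $\max_{B_r(y_0)}|g|\le C\lambda^{\alpha}r^{-\beta}\big(\fint_{B_{3r/2}(y_0)}g^2\big)^{1/2}$ for suitable $\alpha,\beta$ depending only on $n$ — essentially the $(n+1)$-dimensional analogue of Lemma \ref{upper bound for u}. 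For the denominator I need a lower bound $\max_{B_{r/2}(y_0)}|g|\ge\big(\fint_{B_{r/4}(y_0)}g^2\big)^{1/2}$, which is immediate since the max dominates the average. Then the ratio of $L^2$-averages $\fint_{B_{3r/2}}g^2\big/\fint_{B_{r/4}}g^2$ is controlled by Lemma \ref{doubling condition only for u or v} (iterated, or its Remark \ref{remark for doubling condition only for u with any radius} form) by $\big(\lambda^2+r^{-4}\big)\,2^{C\max\{C_0,N(y_0,2r)\}}$; taking $\log_2$ and using $\log_2\lambda\ge C_0$ to absorb the $C_0$ term yields the claimed bound, with the $\ln\lambda$ and $-\ln r$ terms coming from $\log_2(\lambda^2+r^{-4})$ and from the $\lambda^\alpha r^{-\beta}$ prefactor.

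For the reverse inequality $N(y_0,r/2)\le C'(\ln\lambda-\ln r+\bar N(y_0,r))$, I would use the other form of the frequency, $N(y_0,\rho)=\rho D(y_0,\rho)/H(y_0,\rho)$, and recall from \eqref{dlnH(r)/dr} that $d\ln\bar H/d\rho=2N/\rho$ where $\bar H(\rho)=\fint_{\partial B_\rho}(g^2+h^2)$. Integrating this over a dyadic range, together with Lemma \ref{lower bound for the frequency} for the lower bound on $N$, shows that $N(y_0,r/2)$ is comparable (up to the $C_0$-type and $r^2/\lambda$ error from the monotonicity formula and its corollary) to $\log_2$ of a ratio of spherical $L^2$-averages of $(g^2+h^2)$ at comparable radii; by Lemma \ref{doubling condition for both u and v} this in turn reduces to a ratio of solid $L^2$-averages of $(g^2+h^2)$, and then Lemma \ref{h^2 controled by g^2} lets me replace $h^2$ by $g^2$ at the cost of a factor $\lambda^2+r^{-4}$. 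Finally I bound $\fint_{B_\rho}g^2\le\max_{B_\rho}|g|^2$ from above and $\fint_{B_{\rho'}}g^2\ge c\rho'^{\,-(n+1)}$-type lower bounds — here I instead compare directly: the ratio $\max_{B_r}|g|^2/\fint_{B_{r/2}}g^2$ is at most $\max_{B_r}|g|^2/\big(c\,\max_{B_{r/4}}|g|^2\big)$ up to another application of the solid doubling condition, which is exactly $2^{2\bar N}$ times harmless factors. Collecting the logarithms gives the stated estimate.

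The main obstacle I anticipate is bookkeeping the parasitic factors rather than any conceptual difficulty: one must make sure that each passage from a spherical average to a solid average, from a solid average to an $L^\infty$ norm, and from $(g,h)$-quantities to $g$-only quantities, contributes only $\ln\lambda$, $-\ln r$, or $\max\{C_0,N\}$ terms with constants depending only on $n$, and in particular that the $r^2/\lambda$ lower-bound defect in the monotonicity formula and the $\lambda^2+r^{-4}$ factors in Lemma \ref{h^2 controled by g^2} never appear multiplicatively inside an exponent in a way that cannot be absorbed. Since $\log_2\lambda\ge C_0$ is assumed, every $C_0$ can be traded for $\ln\lambda$, which keeps the final form clean. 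A secondary technical point is justifying the interior sup estimate for $g$ in $n+1$ dimensions with the correct $\lambda$-power; this is routine given Lemma \ref{upper bound for u} and Lemma \ref{h^2 controled by g^2}, but it is the step most prone to an off-by-a-power error.
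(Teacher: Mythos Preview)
Your plan for the first inequality matches the paper's argument almost exactly. The only cosmetic difference is that you work with $g$-only solid averages and invoke Lemma~\ref{doubling condition only for u or v}/Remark~\ref{remark for doubling condition only for u with any radius}, whereas the paper keeps $(g^2+h^2)$-averages throughout: it bounds $\max_{B_r}|g|$ from above by $C\lambda^{(n+3)/4}r^{-(n+3)/2}\bigl(\int_{B_{2r}}(g^2+h^2)\bigr)^{1/2}$ via the elliptic estimate, bounds $\max_{B_{r/2}}|g|$ from below by $C\,r^{(n+1)/2}(\lambda+r^{-2})^{-1}\bigl(\int_{B_{r/4}}(g^2+h^2)\bigr)^{1/2}$ via Lemma~\ref{h^2 controled by g^2}, and then applies the $(g,h)$-doubling condition~\eqref{second doubling condition}. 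Either route gives the claimed bound.

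For the second inequality your route is workable but noticeably more roundabout than the paper's, and your final comparison step is shaky. The paper does \emph{not} integrate $d\ln\bar H/d\rho=2N/\rho$ and then try to pass back to $g$-only quantities. Instead it reuses the two $L^2\leftrightarrow L^\infty$ estimates from the first part with the roles swapped: the elliptic estimate gives an \emph{upper} bound $\max_{B_{r/2}}|g|\le C\lambda^{(n+3)/4}r^{-(n+3)/2}\bigl(\int_{B_{5r/8}}(g^2+h^2)\bigr)^{1/2}$, while Lemma~\ref{h^2 controled by g^2} gives a \emph{lower} bound $\max_{B_r}|g|\ge C\,r^{(n+1)/2}(\lambda+r^{-2})^{-1}\bigl(\int_{B_{7r/8}}(g^2+h^2)\bigr)^{1/2}$. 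This yields
\[
\bar N(y_0,r)\ \ge\ C(\ln r-\ln\lambda)\ +\ \tfrac12\log_2\frac{\int_{B_{7r/8}(y_0)}(g^2+h^2)}{\int_{B_{5r/8}(y_0)}(g^2+h^2)},
\]
and the last ratio is bounded below via the \emph{lower} doubling inequality~\eqref{fourth doubling condition} by $C\min_{\rho\in[5r/8,7r/8]}N(y_0,\rho)$. A two-case argument (either $N>C_0$ on the whole interval, whence monotonicity gives $N(y_0,r/2)\le C\min N$; or $N(\rho_0)\le C_0$ somewhere, whence Corollary~\ref{frequency control} gives $N(y_0,r/2)\le CC_0\le C\ln\lambda$) finishes. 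This is shorter than your path and avoids the detour through spherical averages and back.

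The concrete weak spot in your sketch is the sentence claiming that $\max_{B_r}|g|^2/\fint_{B_{r/2}}g^2$ is controlled by $\max_{B_r}|g|^2/(c\max_{B_{r/4}}|g|^2)=2^{2\bar N}\times(\text{harmless})$: the ratio $\max_{B_r}|g|/\max_{B_{r/4}}|g|$ equals $2^{\bar N(y_0,r)+\bar N(y_0,r/2)}$, and you have no a priori control of $\bar N(y_0,r/2)$ by $\bar N(y_0,r)$ without going through $N$ and its monotonicity --- which is precisely what you are trying to prove. The paper sidesteps this by never leaving the $(g^2+h^2)$-world until the very end, so that the needed doubling information is supplied directly by \eqref{fourth doubling condition} rather than by any almost-monotonicity of $\bar N$.
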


\begin{proof}
Note that $\triangle g=h+2\lambda g$ and $\triangle h=0$. So from the standard interior estimate of $g$ and $h$, and note that $g$ is a function defined in a subset of $\mathbb{R}^{n+1}$, we have
\begin{equation*}
\|g\|_{L^{\infty}(B_r(y_0))}\leq C\frac{\lambda^{\frac{n+3}{4}}}{r^{\frac{n+1}{2}+1}}\left(\|g\|_{L^2(B_{2r}(y_0))}+\|h\|_{L^2(B_{2r}(y_0))}\right).
\end{equation*}
From Lemma $\ref{h^2 controled by g^2}$,
\begin{eqnarray*}
\|h\|_{L^2(B_r(x_0))}&\leq&C\left(\frac{1}{r^2}+\lambda\right)\|g\|_{L^2(B_{2r}(x_0))}.
\end{eqnarray*}
So
\begin{equation*}
\|g\|_{L^2(B_r(x_0))}+\|h\|_{L^2(B_r(x_0))}\leq C\left(\frac{1}{r^2}+\lambda\right)\|g\|_{L^{\infty}(B_{2r}(x_0))}r^{\frac{n+1}{2}},
\end{equation*}
i.e.,
\begin{equation*}
\|g\|_{L^{\infty}(B_{2r}(x_0))}\geq
C\frac{1}{\left(\frac{1}{r^2}+\lambda\right)}r^{-\frac{n+1}{2}}(\|g\|_{L^2(B_r(x_0))}+\|h\|_{L^2(B_r(x_0))}).
\end{equation*}
Thus we have
\begin{eqnarray*}
\bar{N}(y_0,r)&=&\log_2\frac{\max\limits_{B_r(y_0)}|g|}{\max\limits_{B_{\frac{r}{2}}(y_0)}|g|}
\\&\leq&\log_2C\frac{\frac{\lambda^{\frac{n+3}{4}}}
{r^{\frac{n+1}{2}+1}}\left(\int_{B_{2r}(y_0)}(g^2+h^2)dy\right)^{\frac{1}{2}}}
{\frac{1}{r^{\frac{n+1}{2}}\left(\lambda+\frac{1}{r^2}\right)}
\left(\int_{B_{\frac{r}{4}}(y_0)}(g^2+h^2)dy\right)^{\frac{1}{2}}}
\\&\leq&C(\ln\lambda-\ln r)+
\frac{1}{2}\log_2\frac{\int_{B_{2r}(y_0)}
(g^2+h^2)dy}{\int_{B_{\frac{r}{4}}(y_0)}(g^2+h^2)dy},
\end{eqnarray*}
where $C$ is a suitable large positive constant depending only on $n$, $\Omega$ and $B_j$, $j=1,2$.
From the doubling condition, i.e., Lemma $\ref{doubling condition for both u and v}$, we have
\begin{equation*}
\log_2\frac{\int_{B_{2r}(y_0)}(g^2+h^2)dy}{\int_{B_{\frac{r}{4}}(y_0)}(g^2+h^2)dy}\leq C\max\left\{N(y_0,2r),C_0\right\}.
\end{equation*}
It tells us that
\begin{equation*}
\bar{N}(y_0,r)\leq C(\ln\lambda-\ln r+\max\left\{N(y_0,2r),C_0\right\}),
\end{equation*}
where $C$ is a positive constant depending only on $n$.
By the similar arguments, we also have that
\begin{eqnarray*}
\bar{N}(y_0,r)&=&\log_2\frac{\max\limits_{B_r(y_0)}|g|}{\max\limits_{B_{\frac{r}{2}}(y_0)}|g|}
\\&\geq&\log_2C\frac{\frac{1}{r^{\frac{n+1}{2}}\left(\lambda+\frac{1}{r^2}\right)}
\left(\int_{B_{\frac{7r}{8}}(y_0)}(g^2+h^2)dy\right)^{\frac{1}{2}}}
{\frac{\lambda^{\frac{n+3}{4}}}{r^{\frac{n+1}{2}+1}}\left(\int_{B_{\frac{5r}{8}}(y_0)}(g^2+h^2)dy\right)^{\frac{1}{2}}}
\\&\geq&C(\ln r-\ln\lambda)+\frac{1}{2}\log_2
\frac{\int_{B_{\frac{7r}{8}}(y_0)}(g^2+h^2)dy}{\int_{B_{\frac{5r}{8}}(y_0)}(g^2+h^2)dy}.
\end{eqnarray*}
Then also from Lemma $\ref{doubling condition for both u and v}$, it holds that
\begin{equation*}
\bar{N}(g,B_r(y_0))\geq C(\ln r-\ln \lambda+\min_{\rho\in[5r/8,7r/8]}N(y_0,\rho)).
\end{equation*}
If for any $\rho\in[5r/8,7r/8]$,  $N(y_0,\rho)\geq C_0$, then from the monotonicity formula, i.e., Theorem $\ref{monotonicity formula}$, we have
\begin{equation*}
\bar{N}(g,B_r(y_0))\geq C(\ln r-\ln\lambda+N(y_0,r/2)),
\end{equation*}
and this implies that
\begin{equation*}
N(y_0,r/2)\leq C(\bar{N}(y_0,r)+\ln\lambda-\ln r).
\end{equation*}
If for some $\rho_0\in[5r/8,7r/8]$, $N(y_0,\rho_0)\leq C_0$,
then also from Theorem $\ref{monotonicity formula}$, we have
\begin{equation*}
N(y_0,r/2)\leq C\max\left\{N(y_0,\rho_0),C_0\right\}\leq CC_0\leq C(\bar{N}(y_0,r)+\ln\lambda-\ln r),
\end{equation*}
if $\lambda$ is large enough. Thus we can get the result we need.
\end{proof}

Now we will go to establish the upper bound for the frequency function. We first prove the following two lemmas.

\begin{lemma}\label{neighborhood of the nonanalytic point}
Let $\Omega$ be a bounded domain in $\mathbb{R}^n$. Assume that $\partial\Omega$ is of $C^{\infty}$.
Let $u$ be a solution of the eigenvalue problem $\triangle^2u=\lambda^2u$ in $\Omega$ with the boundary conditions $(\ref{boundary conditions})$ and define
$T_r(\partial\Omega)=\left\{x\in\overline{\Omega}:dist(x,\partial\Omega)\leq r\right\}$. Then we have
\begin{equation}
\|u\|_{L^2(T_{r^*}(\partial\Omega))}\leq\frac{1}{2}\|u\|_{L^2(\Omega)},
\end{equation}
where $r^*=C_1\lambda^{-(n+2)/2}<1$, and $C_1$ is a positive constant depending only on $n$, $\Omega$ and the boundary operators $B_j$, $j=1,2$.
\end{lemma}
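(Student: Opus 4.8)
\noindent\emph{Proof plan.}
The point is that a bi-harmonic eigenfunction with eigenvalue $\lambda^2$ cannot place an appreciable fraction of its $L^2$ mass in a tubular neighbourhood of $\partial\Omega$ whose width decays polynomially in $\lambda$. Concretely, I plan to prove the two-step scheme
\[
\|u\|_{L^2(T_r(\partial\Omega))}^2\le C\,r\,\|u\|_{L^2(\Omega)}\,\|u\|_{H^1(\Omega)},\qquad \|u\|_{H^1(\Omega)}\le C\,\lambda\,\|u\|_{L^2(\Omega)},
\]
with $C$ depending only on $n$, $\Omega$, $B_j$; then, since $n\ge2$ gives $(n+2)/2\ge2$, the choice $r=r^\ast=C_1\lambda^{-(n+2)/2}$ makes the product at most $C C_1\lambda^{-1}\|u\|_{L^2(\Omega)}^2\le\tfrac14\|u\|_{L^2(\Omega)}^2$ once $C_1$ is small. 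Taking square roots gives the lemma.

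For the global \emph{a priori} bound I would use that, as $\partial\Omega$ is $C^\infty$ and $\{\triangle^2;B_1,B_2\}$ is a regular elliptic boundary value problem, the standard Agmon--Douglis--Nirenberg estimate yields $\|u\|_{H^4(\Omega)}\le C\big(\|\triangle^2u\|_{L^2(\Omega)}+\|u\|_{L^2(\Omega)}\big)$; plugging in $\triangle^2u=\lambda^2u$ and using $\lambda>1$ gives $\|u\|_{H^4(\Omega)}\le C\lambda^2\|u\|_{L^2(\Omega)}$, and interpolation between $H^4(\Omega)$ and $L^2(\Omega)$ gives $\|u\|_{H^1(\Omega)}\le C\lambda^{1/2}\|u\|_{L^2(\Omega)}$, which is more than enough. (The boundary operators $B_j$ enter only through the ellipticity constant of this estimate, which is why the final $C_1$ may depend on them.)

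For the geometric estimate I would pass to boundary-normal (Fermi) coordinates: there is $r_0>0$ depending only on $\Omega$ such that for $r<r_0$ the map $\Phi(\sigma,s)=\sigma-s\,\nu(\sigma)$, with $\nu$ the outer unit normal, is a bi-Lipschitz diffeomorphism of $\partial\Omega\times[0,r]$ onto $T_r(\partial\Omega)$ with Jacobian bounded above and below by constants depending only on $\Omega$. Writing $\Sigma_s=\Phi(\partial\Omega\times\{s\})$, Fubini's theorem gives $\|u\|_{L^2(T_r(\partial\Omega))}^2\le C\int_0^r\!\int_{\Sigma_s}u^2\,d\mathcal{H}^{n-1}\,ds$. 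For fixed small $s$ I bound $\int_{\Sigma_s}u^2\,d\mathcal{H}^{n-1}$ by a uniform trace inequality: choose a $C^\infty$ vector field $X$ on $\overline\Omega$, with $\|X\|_{\infty}$ and $\|\operatorname{div}X\|_{\infty}$ controlled by $\Omega$ uniformly in $s$, equal on $\Sigma_s$ to the outward unit normal of $\Omega\setminus T_s(\partial\Omega)$; applying the divergence theorem to $u^2X$ on $\Omega\setminus T_s(\partial\Omega)$ gives $\int_{\Sigma_s}u^2\,d\mathcal{H}^{n-1}\le C\big(\|u\|_{L^2(\Omega)}^2+\|u\|_{L^2(\Omega)}\|\nabla u\|_{L^2(\Omega)}\big)\le C\|u\|_{L^2(\Omega)}\|u\|_{H^1(\Omega)}$. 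Integrating in $s$ yields the first displayed inequality, and the proof is then finished by the choice of $r^\ast$ above, with $C_1$ also small enough that $r^\ast<r_0$ and $r^\ast<1$.

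I expect the main technical point to be the global \emph{a priori} step rather than the interior one: Lemma~\ref{upper bound for u} already tracks the $\lambda$-dependence away from $\partial\Omega$, but here one needs regularity \emph{up to the boundary} with an explicit power of $\lambda$, and one must check that the boundary operators $B_j$ affect only the constant. The Fermi-coordinate/trace part is routine; the only care needed is uniformity of all constants over the family of parallel surfaces $\Sigma_s$, which follows from $C^\infty$ smoothness of $\partial\Omega$. A more ``optimal-looking'' alternative would be to prove a boundary analogue of Lemma~\ref{upper bound for u}, namely $\|u\|_{L^\infty(\Omega)}\le C\lambda^{(n+2)/4}\|u\|_{L^2(\Omega)}$ (the exponent $(n+2)/2$ in $r^\ast$ is precisely what then matches $\|u\|_{L^2(T_{r^\ast}(\partial\Omega))}^2\le|T_{r^\ast}(\partial\Omega)|\,\|u\|_{L^\infty(\Omega)}^2$), but establishing that sharp boundary $L^\infty$ bound is itself harder, so the trace argument above is the cleaner route.
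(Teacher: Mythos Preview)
Your argument is correct, but the paper takes precisely the route you dismiss in your final paragraph. The paper's proof is simply: state the global bound $\|u\|_{L^{\infty}(\Omega)}\le C\lambda^{(n+2)/4}\|u\|_{L^{2}(\Omega)}$ as a standard elliptic estimate (their equation~\eqref{standard elliptic estiamte}), then write $\|u\|_{L^{2}(T_r(\partial\Omega))}\le |T_r(\partial\Omega)|^{1/2}\|u\|_{L^{\infty}(\Omega)}\le C r^{1/2}\lambda^{(n+2)/4}\|u\|_{L^{2}(\Omega)}$, and choose $r^\ast=C_1\lambda^{-(n+2)/2}$. That is the whole argument, and it explains the specific exponent $(n+2)/2$ in the statement. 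The boundary $L^\infty$ bound you call ``harder'' is no harder than the ADN estimate you already invoke: it follows from the same $\|u\|_{H^{4}(\Omega)}\le C\lambda^{2}\|u\|_{L^{2}(\Omega)}$, iterated and combined with Sobolev embedding $H^{k}\hookrightarrow L^{\infty}$ for $k=[n/2]+1$.

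Your trace/Fermi-coordinate approach is a genuine alternative and in fact sharper: with $\|u\|_{H^{1}(\Omega)}\le C\lambda^{1/2}\|u\|_{L^{2}(\Omega)}$ you only need $r^\ast\sim\lambda^{-1/2}$, so the stated $r^\ast=C_1\lambda^{-(n+2)/2}$ is comfortably small enough. What you gain is a dimension-independent width; what you pay is the extra machinery of the uniform trace inequality over the parallel surfaces $\Sigma_s$, which the paper avoids entirely by going through $L^\infty$. Both proofs rest on the same global up-to-the-boundary regularity (and hence on the implicit assumption that $\{\triangle^{2};B_1,B_2\}$ satisfies the complementing condition), so neither is more elementary at the level of inputs; the paper's version is just shorter to write down.
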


\begin{proof}
First we have the following global $L^{\infty}(\Omega)$ estimate of $u$:
\begin{equation}\label{standard elliptic estiamte}
\|u\|_{L^{\infty}(\Omega)}\leq C\lambda^{\frac{n+2}{4}}\|u\|_{L^2(\Omega)},
\end{equation}
where $C$ is a positive constant depending only on $n$, $\Omega$ and the boundary operators $B_j$,
$j=1,2$.
Thus
\begin{eqnarray*}
\|u\|_{L^2(T_r(\partial\Omega))}&\leq&C\|u\|_{L^{\infty}(\Omega)}(\mathcal{H}^n(T_r(\partial\Omega)))^{\frac{1}{2}}
\\&\leq&C\lambda^{\frac{n+2}{4}}\|u\|_{L^2(\Omega)}((\mathcal{H}^{n-1}(\partial\Omega)r)^{\frac{1}{2}})
\\&\leq&C\lambda^{\frac{n+2}{4}}\|u\|_{L^2(\Omega)}r^{\frac{1}{2}}
\\&\leq&\frac{1}{2}\|u\|_{L^2(\Omega)},
\end{eqnarray*}
if $0<r\leq C_1\lambda^{-(n+2)/2}$ for some suitable constant $C_1$ depending only on $n$, $\Omega$, $\mathcal{H}^{n-1}(\partial\Omega)$, and the boundary operators $B_j$, $j=1,2$. That is the desired result.
\end{proof}

Because $\partial\Omega$ is analytic except the set $\Gamma$, one can extend the function $u$ out of $\Omega$ except a neighborhood of $\Gamma$. Thus we have the following conclusion.

\begin{lemma}\label{extending}
Let $T_{r}(\Gamma)=\left\{x\in\overline{\Omega}:dist(x,\Gamma)\leq r\right\}$ be the $r$ tubular type domain containing  $\Gamma$.
Let $x\in\overline{\Omega}\setminus T_{\widetilde{r}}(\Gamma)$, where $\widetilde{r}=\lambda^{-d}<1$ for some positive constant $d$. Then for any $\tau\in(0,1)$ there exists a positive constant $C$ depending only on $n$, $\Omega$, $d$, and $B_j$, $j=1,2$, such that for any $1\geq r\geq\widetilde{r}$, $u$ can be analytically
extended into the set $B_{\tau r}(x)\setminus\Omega$, and
\begin{equation}
\|u\|_{L^{\infty}(B_{\tau r}(x))}\leq e^{C\sqrt{\lambda}}\|u\|_{L^2(B_{r}(x)\cap\Omega)}.
\end{equation}

\end{lemma}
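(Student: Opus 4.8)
The plan is to get the extension from analytic regularity \emph{up to the boundary} for the elliptic boundary value problem $\{\Delta^2u-\lambda^2u=0,\ B_1u=B_2u=0\}$ over the analytic part $\partial\Omega\setminus\Gamma$, and then to propagate the local extension by a chaining argument in which every constant is tracked in $\lambda$ and in the distance to $\Gamma$. By homogeneity assume $\|u\|_{L^2(B_r(x)\cap\Omega)}=1$. Since $\Delta^2u=\lambda^2u$, the standard interior $W^{4,2}$ estimate gives $\|u\|_{W^{4,2}(B_{\rho/2})}\le C(\rho^{-4}+\lambda^2)\|u\|_{L^2(B_\rho)}$, and, where $B_\rho$ meets the smooth boundary, the corresponding boundary estimate for the elliptic system $\{\Delta^2,B_1,B_2\}$ (licit because $\partial\Omega$ is $C^\infty$ and the complementing/Lopatinski--Shapiro condition holds under the standing hypotheses on $B_j$); Sobolev embedding then yields $\|u\|_{L^\infty(B_{(1+\tau)r/2}(x)\cap\overline\Omega)}\le\lambda^{M}r^{-M}\le e^{C_0\sqrt\lambda}$ for a dimensional $M$ and $\lambda$ large, the last step using $r\ge\widetilde r=\lambda^{-d}$. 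This already settles the bound on the part of $B_{\tau r}(x)$ inside $\Omega$.

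\textbf{Step 2 (local analytic continuation across $\partial\Omega\setminus\Gamma$).} Fix $x_0\in\partial\Omega\setminus\Gamma$ with $\operatorname{dist}(x_0,\Gamma)\ge c\widetilde r$. Near $x_0$ the boundary is an analytic hypersurface and the $a_{\alpha,j}$ are analytic, with analyticity radii $\gtrsim\widetilde r^{\,q}$ for a fixed exponent $q=q(n,\Omega)$ measuring how the data degenerates toward $\Gamma$. Rescale $y\mapsto x_0+\lambda^{-1/2}y$: this turns $\Delta^2u-\lambda^2u=0$ into $\Delta^2\tilde u-\tilde u=0$, an equation with $O(1)$ coefficients and infinite analyticity radius, and turns each $B_j$ into a boundary operator whose coefficients are bounded by the original ones and whose analyticity radius becomes $\gtrsim\lambda^{1/2}\widetilde r^{\,q}$. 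Applying the Morrey--Nirenberg real-analyticity theorem up to the boundary to the rescaled elliptic boundary value problem and undoing the rescaling gives Cauchy estimates
\[
|\partial^\beta u(y)|\ \le\ A\,\rho_0^{-|\beta|}\,|\beta|!,\qquad \rho_0\ \gtrsim\ \min\bigl\{\lambda^{-dq},\,c\lambda^{-1/2}\bigr\},\qquad A\ \le\ e^{C_0\sqrt\lambda},
\]
for $y$ in a neighbourhood of $x_0$, with $A$ controlled by Step~1. Hence $u$ continues holomorphically into the complex ball of radius $\rho_0/2$ about $x_0$ — in particular across $\partial\Omega$ into the exterior of $\Omega$ — and by uniqueness of analytic continuation the extension $U$ still solves $\Delta^2U=\lambda^2U$.

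\textbf{Step 3 (chaining).} Now $U$ solves $\Delta^2U=\lambda^2U$ on $\Omega$ together with an exterior collar of $\partial\Omega\setminus\Gamma$ of width $\gtrsim\rho_0$. At any interior point $z$ of this enlarged domain, rescaling by $\lambda^{1/2}$ and using the absolute interior Cauchy estimates for $\Delta^2\tilde U=\tilde U$ shows that the holomorphic-continuation radius at $z$ is $\gtrsim s:=\min\{\operatorname{dist}(z,\partial),\,c\lambda^{-1/2}\}$, so along any path staying at distance $\gtrsim\widetilde r$ from $\Gamma$ one has $s\gtrsim\min\{\lambda^{-dq},c\lambda^{-1/2}\}$. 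Given $z$ in the part of $B_{\tau r}(x)\setminus\Omega$ over the analytic boundary, join it to $x$ by such a path of length $\le2r\le2$, cover the path by $O(r/s)$ Cauchy balls, and iterate the three-ball estimate; each step costs only an absolute multiplicative factor, so $|U(z)|\le(C_2)^{O(r/s)}A\le e^{C\sqrt\lambda}$, the exponent being $\le C\sqrt\lambda$ once $d$ lies in the admissible range $dq\le1/2$ (and $r\le1$). Taking the supremum over $z$ and combining with Step~1 gives the asserted bound.

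\textbf{Main obstacle.} The crux is Step~2: running the Morrey--Nirenberg analyticity estimate up to the boundary for the \emph{fourth-order} elliptic system $\{\Delta^2,B_1,B_2\}$ while extracting simultaneously (a) the correct length scale $\lambda^{-1/2}$ of the equation and (b) only a polynomial-in-$\widetilde r$ loss in the analyticity radius as the base point approaches $\Gamma$. Once this quantitative up-to-the-boundary analyticity is available, Steps~1 and 3 contribute only polynomial factors in $\lambda$ and $1/\widetilde r$, which are absorbed into $e^{C\sqrt\lambda}$ precisely because $\lambda^{-d}=\widetilde r\le r\le1$.
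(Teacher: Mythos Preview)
Your approach is considerably more involved than the paper's and, as written, contains a genuine gap. The paper does not rescale, does not invoke Morrey--Nirenberg boundary analyticity at boundary points, and does not chain along paths. Instead it works at the single centre $x$: using that (in the paper's reading of the hypothesis) $\operatorname{dist}(x,\Gamma)>r$, so $B_r(x)$ meets only the analytic part of $\partial\Omega$, the paper records the derivative bound
\[
|D^{\alpha}u(x)|\ \le\ C\,\frac{\lambda^{|\alpha|/2+(n+2)/4}}{r^{|\alpha|+1+n/2}}\,\|u\|_{L^2(B_r(x)\cap\Omega)}
\]
with a constant $C$ independent of $\alpha$, and then simply sums the Taylor series of $u$ about $x$. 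Since $\sum_{k}\sum_{|\alpha|=k}\tfrac{1}{k!}|D^\alpha u(x)|(\tau r)^k\lesssim\lambda^{(n+2)/4}r^{-n/2-1}\sum_k\tfrac{k^n}{k!}(\tau\sqrt\lambda)^k\le e^{C(\tau\sqrt\lambda+\ln\lambda-\ln r)}$, and $-\ln r\le d\ln\lambda$, one obtains both the analytic extension to $B_{\tau r}(x)$ and the $L^\infty$ bound in a single stroke; the dependence on $d$ enters only through absorbing $d\ln\lambda$ into $C\sqrt\lambda$.

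The gap in your argument is the constraint $dq\le\tfrac12$ imposed at the end of Step~3. The parameter $d$ is \emph{given} in the statement, and your exponent $q$ in Step~2 (controlling how the analyticity radius of $\partial\Omega$ degenerates toward $\Gamma$) is neither supplied by the hypotheses nor shown to be arbitrarily small; for $d>1/(2q)$ the factor $(C_2)^{O(r/s)}$ with $s\gtrsim\lambda^{-dq}$ is not dominated by $e^{C\sqrt\lambda}$. The paper avoids this entirely because nothing in its argument depends on how the boundary behaves near $\Gamma$: the whole extension is achieved by one Taylor expansion at $x$, and the only place $d$ appears is through $-\ln r=O(\ln\lambda)$, which is $o(\sqrt\lambda)$ for every fixed $d$. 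Your Steps~2--3 can simply be replaced by that direct summation.
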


\begin{proof}
Because $x\in\overline{\Omega}\setminus T_{r}(\Gamma)$ with $r\geq\widetilde{r}$, we know that $dist(x,\Gamma)>r$. So
from the standard elliptic estimate, the Sobolev's embedding theorem and the fact that $\partial\Omega$ is of $C^{\infty}$ and compact, we know that there exists a positive constant $C$ depending only on $n$, $\Omega$, $B_j$, $j=1,2$, but independent of $x$, such that for any fixed multi-index $\alpha$,
\begin{equation*}
|D^{\alpha}u(x)|\leq C\frac{\lambda^{(\frac{|\alpha|}{2}+\frac{n+2}{4})}}
{r^{|\alpha|+1+\frac{n}{2}}}\|u\|_{L^2(B_{r}(x)\cap\Omega)}.
\end{equation*}
Because $u$ is analytic in some neighborhood of $x$, the Taylor power series of $u$ at point $x$ is convergent in $B_{\tau r}(x)$ for any $\tau\in(0,1)$. So
\begin{eqnarray*}
\|u\|_{L^{\infty}(B_{\tau r}(x))}&\leq&\sum\limits_{k=0}^{\infty}\sum\limits_{|\alpha|=k}
\frac{1}{k!}|D^{\alpha}u(x)|(\tau r)^{k}\\&\leq&
C\sum\limits_{k=0}^{\infty}\frac{k^n}{k!}(\tau\sqrt{\lambda})^k\frac{\lambda^{\frac{n+2}{4}}}{(r)^{\frac{n}{2}+1}}
\|u\|_{L^2(B_{r}(x)\cap\Omega)}
\\&\leq&e^{C(\tau\sqrt{\lambda}+\ln\lambda-\ln r)}\|u\|_{L^2(B_{r}(x)\cap\Omega)}
\\&\leq&e^{C\sqrt{\lambda}}\|u\|_{L^2(B_{r}(x)\cap\Omega)}.
\end{eqnarray*}
In the last inequality in the above, we have used the fact that $r\geq\widetilde{r}=\lambda^{-d}$, $\ln\lambda<\sqrt{\lambda}$ for $\lambda$ large enough, and
$C$ is a positive constant depending only on $n$, $\Omega$, $d$ and $B_j$, $j=1,2$, but independent of $x$ because $\partial\Omega$ is of $C^{\infty}$ and compact.
That is the desired result.
\end{proof}

Now we begin to show an upper bound for the frequency function.

\begin{theorem}\label{upper bound for the frequency function}
Let $u$ be an eigenfunction in $\Omega$ with the boundary condition $(\ref{boundary conditions})$.
Then, there exists a positive constant $R_0$ depending only on $n$ and $\Omega$, such that for any $x\in\Omega(R_0,\Gamma)=\overline{\Omega\setminus T_{R_0}(\Gamma)}$ and $0<r<R_0/2$, it holds that
\begin{equation}
N(y,r)\leq C\sqrt{\lambda}.
\end{equation}
Here $y=(x,0)$ and the positive constants $C$ depends only on $n$, $\Omega$ and $B_j$, $j=1,2$.
\end{theorem}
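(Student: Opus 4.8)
The plan is to prove the bound $N(y,r)\le C\sqrt{\lambda}$ by a two-part argument. First I would establish the bound at some \emph{carefully chosen} center $y_0=(\bar x,0)$ where $|u(\bar x)|$ is comparable to $\|u\|_{L^\infty}$, and then propagate it to an arbitrary $x\in\Omega(R_0,\Gamma)$ using the ``changing center'' property (Theorem~\ref{frequency change center}), the relation between $N$ and $\bar N$ (Lemma~\ref{relation of two kind frequency}), the doubling conditions (Lemma~\ref{doubling condition for both u and v}, Remark~\ref{remark1}), and an iteration along a chain of overlapping balls. The choice of $R_0$ will be dictated by Lemma~\ref{neighborhood of the nonanalytic point} (so that $\{u=0\}$ does not force $u$ to be small on a definite fraction of $\Omega$) and by Lemma~\ref{extending} (so that $u$ admits an analytic continuation through $\partial\Omega\setminus\Gamma$ across balls of radius $\gtrsim R_0$).

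\textbf{Step 1: base point.} Pick $\bar x\in\Omega$ with $|u(\bar x)|\ge \tfrac12\|u\|_{L^\infty(\Omega)}$. By the global $L^\infty$--$L^2$ estimate \eqref{standard elliptic estiamte} and Lemma~\ref{neighborhood of the nonanalytic point}, a definite portion of the $L^2$ mass of $u$ lives at distance $\gtrsim \lambda^{-(n+2)/2}$ from $\partial\Omega$; combined with the doubling condition \eqref{doubling condition from g to u} this gives a lower bound $\fint_{B_\rho(\bar x)}u^2\,dx\ge e^{-C\sqrt\lambda}\fint_{B_1(\bar x)\cap\Omega}u^2\,dx$ for $\rho$ down to $\sim\lambda^{-d}$. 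Translating to $g$ and using Lemma~\ref{relation of two kind frequency} together with Definition~\ref{definition of doubling index}, the doubling index $\bar N(y_0,r)$ at such a point is controlled: since $|g|$ is already near its maximum at the center of a small ball, $\max_{B_r}|g|/\max_{B_{r/2}}|g|$ cannot be large, so $\bar N(y_0,r)\le C(\ln\lambda-\ln r)$, hence $N(y_0,r)\le C(\ln\lambda-\ln r+\bar N(y_0,2r))\le C\sqrt\lambda$ for $r\ge\lambda^{-d}$.

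\textbf{Step 2: iteration / propagation.} Given an arbitrary $x\in\Omega(R_0,\Gamma)$, connect $\bar x$ to $x$ by a chain of points $x_0=\bar x,x_1,\dots,x_m=x$ with consecutive points at distance $\sim R_0$, staying in a fixed neighborhood of $\overline{\Omega\setminus T_{R_0/2}(\Gamma)}$; the number $m$ of steps is bounded in terms of $n$ and $\Omega$ only. At each step I would: (i) use Lemma~\ref{extending} to analytically extend $u$ through $\partial\Omega\setminus\Gamma$ so that balls of radius $\sim R_0$ around the $x_j$ are genuinely interior balls for the extended $g$; (ii) use the doubling conditions of Lemma~\ref{doubling condition for both u and v} to transfer a lower bound on $\fint_{\partial B}(g^2+h^2)$ from one ball to an overlapping one, picking up a multiplicative factor $2^{C\max\{N,C_0\}}$; (iii) re-read this as a bound on $\bar N$ at the new center via Lemma~\ref{relation of two kind frequency}, and convert back to $N$. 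Iterating $m=O_{n,\Omega}(1)$ times, each step multiplies the relevant exponent by a constant, so the final bound is still $N(y,r)\le C\sqrt\lambda$ with $C=C(n,\Omega,B_j)$; the ``changing center'' Theorem~\ref{frequency change center} then gives the bound for all $r<R_0/2$, not just at one radius.

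\textbf{Main obstacle.} The delicate point is keeping the constant in the exponent from \emph{compounding multiplicatively with $\lambda$}: each application of a doubling inequality or of Lemma~\ref{relation of two kind frequency} introduces a factor $2^{C\max\{N,C_0\}}$, and if $N$ itself were already of size $\sqrt\lambda$ this would blow up to $2^{C\sqrt\lambda}$, which is far worse than the target $C\sqrt\lambda$. The resolution must be that the additive $\ln\lambda-\ln r$ and $\sqrt\lambda r$ terms dominate while $N$ is only ever used at radii $r\gtrsim\lambda^{-d}$ (so $\ln\lambda-\ln r\le C\sqrt\lambda$) and that the iteration is set up so that one controls $N$ at radius $r$ in terms of $\bar N$ at a comparable radius plus $\sqrt\lambda$ \emph{additively}, not $N$ at that radius times a constant. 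Making the bookkeeping of radii and the finitely many iteration steps consistent with this additive structure — in particular choosing $d$ and $R_0$ compatibly with both Lemma~\ref{neighborhood of the nonanalytic point} and Lemma~\ref{extending} — is where the real work lies.
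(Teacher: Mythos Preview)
Your two–step outline is the right skeleton and matches the paper's approach at a high level, but there is one essential ingredient missing that makes the argument, as written, incomplete.

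\textbf{The gap.} In Step~1 you pick $\bar x$ with $|u(\bar x)|\ge\tfrac12\|u\|_{L^\infty(\Omega)}$ and assert control of $\bar N(y_0,r)$ and hence of $N(y_0,r)$ ``for $r\ge\lambda^{-d}$''. But the doubling index $\bar N(y_0,r)$ is only defined when $B_r(\bar x)$ lies in the (possibly extended) domain, and the extension of Lemma~\ref{extending} is only available on balls of radius at most $\operatorname{dist}(\bar x,\Gamma)$. Nothing in your choice of $\bar x$ prevents $\bar x$ from lying within distance $r^*\sim\lambda^{-(n+2)/2}$ of $\partial\Omega$ and of $\Gamma$: Lemma~\ref{neighborhood of the nonanalytic point} only guarantees a max point of $|u|$ at distance $\ge r^*$ from $\partial\Omega$. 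Thus at the base point you only control $\bar N(y_0,r)$ for $r\lesssim r^*$, not for $r$ up to $R_0$. Consequently Step~2, which immediately forms a chain with balls of radius $\sim R_0$ and with ``$m$ bounded in terms of $n$ and $\Omega$ only'', cannot start: the first ball of the chain around $\bar x$ may meet $\Gamma$.

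\textbf{What the paper does that you are missing.} The paper inserts a preliminary iteration between your Steps~1 and~2. Starting at $y_0=(\bar x,0)$ with radius $r_0=r^*$, it chooses $x_{k+1}\in \overline{B_{r_k/16}(x_k)}$ to \emph{maximize} $\operatorname{dist}(x_{k+1},\partial\Omega)$, so that $\operatorname{dist}(x_{k+1},\partial\Omega)\ge r_{k+1}:=\tfrac{17}{16}r_k$ and $B_{r_{k+1}}(x_{k+1})\subset\Omega$. Using Theorem~\ref{frequency change center} and Lemma~\ref{relation of two kind frequency} at each step, one gets
\[
\bar N(y_k,r_k)\le C\Bigl(\ln\lambda-\sum_{j<k}\ln r_j+\sqrt\lambda\sum_{j\le k}r_j\Bigr).
\]
This takes $k\sim\ln\lambda$ steps (not $O_{n,\Omega}(1)$) until $r_k\ge R_0$, and the additive bookkeeping yields $\bar N(y_k,R_0)\le C(\ln^2\lambda+\sqrt\lambda)\le C\sqrt\lambda$. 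Only \emph{after} this first iteration is $x_k$ at distance $\ge R_0$ from $\partial\Omega$ (hence from $\Gamma$), and only then can one run your Step~2 chain with $O_{n,\Omega}(1)$ balls of radius $\sim R_0$, now legitimately using Lemma~\ref{extending}. Your ``main obstacle'' paragraph correctly senses that something delicate happens with radii, but the missing idea is precisely this geometric radius-growing iteration that escapes from the boundary; without it the propagation step cannot be initiated.
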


\begin{proof}
Without loss of generality, we may assume that $\|u\|_{L^2(\Omega)}=1$. Then from Lemma $\ref{neighborhood of the nonanalytic point}$, we know that $\|u\|_{L^2(\Omega\setminus T_{r^*})(\partial\Omega)}\geq\frac{1}{2}$ with $r^*=C_1\lambda^{-(n+2)/2}$ is the same radius as in Lemma $\ref{neighborhood of the nonanalytic point}$.
Moreover, the function $u$ is analytic at any point of the set
$\Omega(r^*,\partial\Omega)=\overline{\Omega\setminus T_{r^*}(\partial\Omega)}$.

Let $\bar{x}$ denote a maximum point of $|u|$ in $\Omega(r^*,\partial\Omega)$ and define $y_0=(\bar{x},0)$.
From the $L^{\infty}$ estimation of $u$, we have
\begin{equation*}
\|u\|_{L^{\infty}(B_{r^*}(\bar{x}))}\leq \|u\|_{L^{\infty}(\Omega)}\leq C\lambda^{\frac{n+2}{4}}\|u\|_{L^2(\Omega)}\leq e^{C\ln\lambda}.
\end{equation*}
On the other hand,
\begin{equation*}
|u(\bar{x})|=\|u\|_{L^{\infty}(\Omega(r^*,\partial\Omega))}\geq \frac{1}{\left(\mathcal{H}^n(\Omega)\right)^{\frac{1}{2}}}\|u\|_{L^2(\Omega(r^*,\partial\Omega))}
\geq\frac{1}{2\left(\mathcal{H}^n(\Omega)\right)^{\frac{1}{2}}}.
\end{equation*}
Then the doubling index of $g$ centered at $y_0$ with radius $r_0=r^*$ can be bounded as follows:
\begin{eqnarray*}
\bar{N}(y_0,r_0)&=&\log_2\frac{\max\limits_{y\in B_{r_0}(y_0)}|g(y)|}{\max\limits_{y\in B_{r_0/2}(y_0)}|g(y)|}
\\&\leq&\log_2\frac{\max\limits_{y\in B_{r_0}(y_0)}|g(y)|}{|g(y_0)|}
\\&\leq&\log_2\frac{e^{\sqrt{\lambda}r_0}\max\limits_{x\in B_{r_0}(\bar{x})}|u(x)|}{|u(\bar{x})|}
\\&\leq&C\sqrt{\lambda}r_0+\log_2\frac{e^{C\ln\lambda}}{C}
\\&\leq&C(\sqrt{\lambda}r_0+\ln\lambda).
\end{eqnarray*}

From Lemma $\ref{relation of two kind frequency}$, we have
\begin{equation*}
N(y_0,\frac{r_0}{2})\leq C(\sqrt{\lambda}r_0+\ln\lambda-\ln r_0).
\end{equation*}
Thus from Theorem $\ref{frequency change center}$, for any point $p\in B_{\frac{r_0}{16}}(y_0)$,
\begin{equation*}
N(p,\frac{r_0}{8})\leq C(\sqrt{\lambda}r_0+\ln\lambda-\ln r_0),
\end{equation*}
and  thus from Lemma $\ref{relation of two kind frequency}$ again,
\begin{equation*}
\bar{N}(p,\frac{r_0}{16})\leq C(\sqrt{\lambda}r_0+\ln\lambda-\ln r_0).
\end{equation*}
Then from the fact that $y_0\in B_{r_0/16}(p)$, we have
\begin{eqnarray*}
\max\limits_{B_{\frac{r_0}{32}}(p)}|g|&\geq&\max\limits_{B_{\frac{r_0}{16}}(p)}|g|
e^{-C(\sqrt{\lambda}r_0+\ln\lambda-\ln r_0)}
\\&\geq&|u(\bar{x})|e^{-C(\sqrt{\lambda}r_0+\ln \lambda-\ln r_0)}
\\&\geq&e^{-C(\sqrt{\lambda}r_0+\ln \lambda-\ln r_0)}.
\end{eqnarray*}
Choose $x_1\in\overline{B_{r_0/16}(\bar{x})}$ be the point such that
$$dist(x_1,\partial\Omega)=\max_{x\in B_{r_0/16}(\bar{x})\cap\Omega(r_0,\partial\Omega)}dist(x,\partial\Omega),$$
and let $y_1=(x_1,0)\in B_{r_0/16}(y_0)$. Then from the above inequalities, we have
\begin{equation*}
\max\limits_{B_{\frac{r_0}{2}+\frac{r_0}{32}}(y_1)}|g|\geq\max\limits_{B_{\frac{r_0}{32}}(y_1)}|g|\geq e^{-C(\sqrt{\lambda}r_0+\ln\lambda-\ln r_0)}.
\end{equation*}
Let $r_1=r_0+\frac{r_0}{16}$, then $dist(x_1,\partial\Omega)\geq r_0+r_0/16=r_1$. Thus $B_{r_1}(x_1)\subseteq\Omega$.
From the $L^{\infty}$ estimation of $u$ in $\Omega$ again, we have
\begin{equation*}
\|u\|_{L^{\infty}(B_{r_1}(x_1))}\leq e^{C\ln\lambda}.
\end{equation*}
Thus it holds that
\begin{eqnarray*}
\bar{N}(y_1,r_1)&=&\log_2\frac{\max\limits_{y\in B_{r_1}(y_1)}|g(y)|}{\max\limits_{y\in B_{r_1/2}(y_1)}|g(y)|}
\\&\leq&\log_2\frac{e^{\sqrt{\lambda}r_1}\max\limits_{x\in B_{r_1}(x_1)}|u(x)|}{e^{-C(\sqrt{\lambda}r_0+\ln\lambda-\ln r_0)}}
\\&\leq&\sqrt{\lambda}r_1+\log_2\frac{e^{C\ln\lambda}}
{e^{-C(\sqrt{\lambda}r_0+\ln\lambda-\ln r_0)}}
\\&\leq&C(\sqrt{\lambda}(r_0+r_1)+\ln\lambda-\ln r_0).
\end{eqnarray*}
Then from Lemma $\ref{relation of two kind frequency}$, we have
\begin{equation*}
N(y_1,r_1/2)\leq C(\sqrt{\lambda}(r_0+r_1)+\ln\lambda-\ln r_0-\ln r_1).
\end{equation*}
Applying the same argument with replacing $r_0$ by $r_1$, we have for $r_2=r_1+\frac{r_1}{16}$, $y_2=(x_2,0)$, where $x_2\in\Omega(r_0,\partial\Omega)$ and $$dist(x_2,\partial\Omega)=\max_{x\in      B_{r_1/16}(x_1)\cap\Omega(r_0,\partial\Omega)}dist(x,\partial\Omega),$$
it holds that
\begin{equation*}
\bar{N}(y_2,r_2)\leq C(\ln\lambda-\ln r_0-\ln r_1+\sqrt{\lambda}(r_0+r_1+r_2)),
\end{equation*}
and thus
\begin{equation*}
N(y_2,r_2/2)\leq C(2\ln\lambda-\ln r_0-\ln r_1-\ln r_2+\sqrt{\lambda}(r_0+r_1+r_2)).
\end{equation*}
Let $\delta=\frac{17}{16}$.
 Choose $R_0$ to be a suitable small positive constant depending only on n and $\Omega$ but independent of $\lambda$, such that the following conditions hold:

(1) $R_0$ satisfies that $\mathcal{H}^n(T_{R_0}(\partial\Omega))\leq\frac{1}{100}\mathcal{H}^n(\Omega)$;

(2) For any fixed $r\leq R_0$, it holds that, $\forall$ $x\in\partial\Omega(r,\partial\Omega)$, there exists unique point $x'\in\partial\Omega$, such that $dist(x,x')=r$;

Repeat the same argument for $k$ times, such that $r_{k-1}<R_0$, and $r_k\geq R_0$.
Then
\begin{equation*}
R_0\leq r_k=\delta^kr_0=C_1\delta^k\lambda^{-\frac{n+2}{2}},
\end{equation*}
and
\begin{equation*}
R_0\geq r_{k-1}=C_1\delta^{k-1}\lambda^{-\frac{n+2}{2}}.
\end{equation*}
These show that $k\leq\bar{C}\ln\lambda$, where $\bar{C}$ is a positive constant depending only on $R_0$, $C_1$ and $n$. Thus we have, $x_k\in\Omega(R_0,\partial\Omega)$, $y_k=(x_k,0)$, and
\begin{eqnarray*}
\bar{N}(y_k,r_k)&\leq&C(\ln^2\lambda-k\ln r_0-(1+2+\cdots+(k-1))\ln\delta+\sqrt{\lambda}r_0(1+\delta+\delta^2+\cdots+\delta^{k}))
\\&\leq&C(\ln^2\lambda+\sqrt{\lambda}r_0(\delta^{k+1}-1)/(\delta-1))
\\&\leq&C(\ln^2\lambda+\sqrt{\lambda}r_0\delta^{k+1})
\\&\leq&C\sqrt{\lambda},
\end{eqnarray*}
if $\lambda$ is large enough such that $\ln\lambda<\sqrt{\lambda}$.
These imply that
\begin{equation}\label{upper bound for doubling index with radius R0}
\bar{N}(y_k,R_0)\leq C\sqrt{\lambda}.
\end{equation}

Now we will consider the upper bound for the doubling index centered at $y=(x,0)$ for any fixed point $x\in\Omega(R_0,\Gamma)$.
Because $x_k$ and $x$ can be connected by some curve in $\Omega(R_0,\Gamma)$ whose length is bounded by some positive constant depending only on $\Omega$, the upper bound for the doubling index centered at $y$ with radius $R_0$ can be obtained by the similar iteration  arguments as above in finite many steps, and the number of the iteration steps depends only on $\Omega$ and $dist(x_k,x)$. In fact, from Lemma $\ref{extending}$, we know that for $\bar{y}_1=(\bar{x}_1,0)$ with $\bar{x}_1\in\overline B_{\tau R_0/16}(x_k)\cap\Omega(R_0,\Gamma)$, where $\tau\in(0,1)$, it holds that
\begin{eqnarray*}
\|g\|_{L^{\infty}(B_{\tau R_0}(\bar{y}_1))}\leq e^{C\sqrt{\lambda}R_0}\|u\|_{L^{\infty}(B_{\tau R_0}(\bar{x}_1))}\leq e^{C\sqrt{\lambda}}\|u\|_{L^2(B_{R_0}(\bar{x}_1)\cap\Omega)}.
\end{eqnarray*}
On the other hand, because $B_{\tau R_0/4}(y_k)\subseteq B_{\tau R_0/2}(\bar{y}_1)$, we have, from $(\ref{upper bound for doubling index with radius R0})$,
\begin{eqnarray*}
\|g\|_{L^{\infty}(B_{\tau R_0/2}(\bar{y}_1))}&\leq&\|g\|_{L^{\infty}(B_{\tau R_0/4}(y_k))}
\\&\geq& e^{-C\sqrt{\lambda}}\|g\|_{L^{\infty}(B_{R_0}(y_k))}\\&\geq& e^{-C\sqrt{\lambda}}\|u\|_{L^2(B_{R_0}(\bar{x}_1))}
\\&\geq&e^{-C\sqrt{\lambda}}\|u\|_{L^2(B_{R_0}(\bar{x}_1)\cap\Omega)}.
\end{eqnarray*}

Thus from the above inequalities, we have
\begin{equation*}
\bar{N}(\bar{y}_1,\tau R_0)\leq C(\sqrt{\lambda}+\sqrt{\lambda}\tau R_0)\leq C\sqrt{\lambda}.
\end{equation*}

Then by repeating the finite steps whose number depends only on $R_0$ and $\Omega$, we can get the desired result by using Lemma $\ref{relation of two kind frequency}$ and Theorem $\ref{monotonicity formula}$.
\end{proof}

Combining Theorem $\ref{upper bound for the frequency function}$ and Theorem $\ref{measure estimate of nodal set of the case L=tirangle}$, and note that we have already extended $u$ outside $\Omega$ except a neighborhood of $\Gamma$ in Lemma $\ref{extending}$, we can get that

\begin{theorem}\label{nodal set of interior domain}
Let $\Omega(R_0,\Gamma)=\overline{\Omega\setminus T_{R_0}(\Gamma)}$, where $R_0$ is the same positive constant as in Theorem $\ref{upper bound for the frequency function}$. Then
\begin{equation}
\mathcal{H}^{n-1}\left(\left\{x\in\Omega(R_0,\Gamma):u(x)=0\right\}\right)\leq C\sqrt{\lambda}.
\end{equation}
Here $C$ is a positive constant depending only on $n$, $\Omega$ and the boundary operators $B_j$, $j=1,2$.
\end{theorem}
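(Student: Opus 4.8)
The plan is to deduce the global bound from finitely many applications of the single-ball estimate of Theorem~\ref{measure estimate of nodal set of the case L=tirangle}, in each ball controlling the frequency function by Theorem~\ref{upper bound for the frequency function}, after first enlarging the domain of definition of $u$ by the analytic extension of Lemma~\ref{extending}. Fix $r_0=R_0/4$, where $R_0=R_0(n,\Omega)$ is the constant of Theorem~\ref{upper bound for the frequency function}; this is a radius independent of $\lambda$. We take $\lambda\geq\lambda_0=\lambda_0(n,\Omega,B_j)$ so large that all the ``$\lambda$ large'' hypotheses used below hold (in particular $\lambda^{-d}<r_0$, where $d$ is the exponent of Lemma~\ref{extending}, and $\ln\lambda<\sqrt\lambda$); the finitely many eigenvalues in $(1,\lambda_0]$ are disposed of at the end.

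Let $x\in\Omega(R_0,\Gamma)$. Since $dist(x,\Gamma)\geq R_0>\lambda^{-d}=\widetilde r$, Lemma~\ref{extending} (with $r=r_0$ and a fixed $\tau\in(0,1)$) extends $u$ analytically into $B_{\tau r_0}(x)\setminus\Omega$; as the extension is real-analytic and $\triangle^2u-\lambda^2u\equiv0$ on $\Omega$, the extended function is, by the identity theorem for real-analytic functions, an eigenfunction of $\triangle^2(\cdot)=\lambda^2(\cdot)$ on the whole Euclidean ball $B_{\tau r_0}(x)$. Hence Theorem~\ref{measure estimate of nodal set of the case L=tirangle} applies with center $x$ and outer radius $\tau r_0$, and since $\tau r_0<R_0/2$, Theorem~\ref{upper bound for the frequency function} gives $N((x,0),\tau r_0)\leq C\sqrt\lambda$. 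Therefore, for a fixed radius $\rho=\rho(n,\Omega)>0$ (a small fraction of $\tau r_0$),
\begin{equation*}
\mathcal{H}^{n-1}\bigl(\{u=0\}\cap B_{\rho}(x)\bigr)\leq C\bigl(\sqrt\lambda+\ln\lambda+\sqrt\lambda\,\tau r_0-\ln(\tau r_0)\bigr)(\tau r_0)^{n-1}\leq C'\sqrt\lambda,
\end{equation*}
using $\ln\lambda\leq\sqrt\lambda$ and that $\tau r_0$ is a fixed constant; $C'$ depends only on $n$, $\Omega$ and $B_j$.

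Now cover $\Omega(R_0,\Gamma)$ by balls $B_\rho(x_1),\dots,B_\rho(x_M)$ with $x_j\in\Omega(R_0,\Gamma)$; since $\Omega(R_0,\Gamma)\subseteq\overline\Omega$ has finite $n$-dimensional Lebesgue measure and $\rho$ is a fixed constant, $M$ may be taken to depend only on $n$ and $\Omega$. Summing the displayed estimate over $j=1,\dots,M$ gives
\begin{equation*}
\mathcal{H}^{n-1}\bigl(\{x\in\Omega(R_0,\Gamma):u(x)=0\}\bigr)\leq\sum_{j=1}^{M}\mathcal{H}^{n-1}\bigl(\{u=0\}\cap B_\rho(x_j)\bigr)\leq MC'\sqrt\lambda,
\end{equation*}
which proves the claim for $\lambda\geq\lambda_0$. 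For $1<\lambda\leq\lambda_0$ there are only finitely many eigenvalues, and for each the same covering together with the boundedness of the frequency function on a compact $\lambda$-range (via Corollary~\ref{frequency control}) yields a bound by an absolute constant, absorbed into $C$.

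The step requiring the most care is the uniformity in $\lambda$ of the covering: one must check that $R_0$ --- hence $r_0$ and $\rho$ --- is genuinely independent of $\lambda$, which is precisely what Theorem~\ref{upper bound for the frequency function} supplies, and that for all large $\lambda$ the analytic extension of Lemma~\ref{extending} furnishes a full ball of radius comparable to $r_0$ about each $x_j$, so that Theorem~\ref{measure estimate of nodal set of the case L=tirangle} can be invoked with the frequency centered at $(x_j,0)$ rather than at some auxiliary interior point. Granting these, the number $M$ of covering balls is controlled by $n$ and $\Omega$ alone and the summation is immediate.
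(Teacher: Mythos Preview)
Your proposal is correct and follows exactly the approach the paper itself sketches: the paper's ``proof'' of this theorem is a single sentence saying to combine Theorem~\ref{measure estimate of nodal set of the case L=tirangle}, Theorem~\ref{upper bound for the frequency function}, and the analytic extension of Lemma~\ref{extending}, and you have supplied precisely the covering-and-summing details that make this combination rigorous. The one soft spot is the final paragraph on $1<\lambda\leq\lambda_0$: Corollary~\ref{frequency control} alone does not give a uniform frequency bound over all eigenfunctions in a fixed eigenspace, so that sentence would need a little more (e.g.\ rerunning the argument of Theorem~\ref{upper bound for the frequency function} with constants depending on $\lambda_0$), but the paper does not address this case either and implicitly works with $\lambda$ large.
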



\section{Measure estimates of nodal sets near $\Gamma$}

In this section, we begin to consider the domain including the nonanalytic set $\Gamma$.

\begin{lemma}\label{extending near x0}
Let $x\in\overline{\Omega}\setminus\Gamma$, and $r=dist(x,\Gamma)>0$.
Then the eigenfunction $u$ can be extended analytically into $B_{\sigma r}(x)$ for any $\sigma\in(0,1)$, and satisfies that
\begin{equation}
\|u\|_{L^{\infty}(B_{\sigma r}(x))}\leq e^{C(\sqrt{\lambda}-\ln r)}\|u\|_{L^2(B_r(x)\cap\Omega)},
\end{equation}
where $C$ is a positive constant depending only on $n$, $\Omega$ and $B_j$, $j=1,2$.
\end{lemma}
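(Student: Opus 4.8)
The plan is to follow the proof of Lemma~\ref{extending} almost verbatim; the only new feature is that the scale $r=\operatorname{dist}(x,\Gamma)$ is no longer bounded below by a power of $\lambda$, so the term $-\ln r$ must be retained in the exponent instead of being absorbed into $\ln\lambda$. \textbf{Set-up.} Since $r=\operatorname{dist}(x,\Gamma)$, every boundary point $z\in\partial\Omega\cap B_{3r/4}(x)$ satisfies $\operatorname{dist}(z,\Gamma)\ge r/4>0$, so on $B_{3r/4}(x)$ both $\partial\Omega$ and all coefficients $a_{\alpha,j}$ are analytic. Because $\partial\Omega$ is piecewise analytic with singular set $\Gamma$ a finite union of compact $(n-2)$-submanifolds, the analyticity radius of $\partial\Omega$ and of the $a_{\alpha,j}$ at such a $z$ is $\ge c\operatorname{dist}(z,\Gamma)\ge cr/4$ with $c=c(n,\Omega)>0$; equivalently, after rescaling $B_{3r/4}(x)$ to unit size the boundary value problem $\triangle^2u=\lambda^2u$, $B_ju=0$ has analytic data with bounds depending only on $n$, $\Omega$ and $B_j$. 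Hence $u$ is real-analytic on $\overline{B_{\sigma r}(x)}$ (including the part outside $\Omega$, after the standard analytic continuation across the analytic boundary piece), and it remains to estimate its Taylor coefficients at $x$.

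\textbf{Derivative bound.} As in Lemma~\ref{extending} I would iterate the $L^2$ a~priori estimates — the interior elliptic estimates for $\triangle^2u=\lambda^2u$ together with the Agmon--Douglis--Nirenberg boundary estimates for $\{B_1,B_2\}$, which complement $\triangle^2$ (this being the well-posed problem already used for the global bound in Lemma~\ref{neighborhood of the nonanalytic point}) — on a chain of balls and half-balls of radii comparable to $r$. Each step gains four derivatives at the cost of a factor $C(\lambda^2+r^{-4})$, the analytic bookkeeping keeping the constants of size $C^k$ after $k$ steps; together with the Sobolev embedding $H^{[n/2]+1}\hookrightarrow L^\infty$ this yields
\begin{equation*}
|D^\alpha u(x)|\le C\,\frac{\lambda^{\frac{|\alpha|}{2}+\frac{n+2}{4}}}{r^{|\alpha|+1+\frac n2}}\,\|u\|_{L^2(B_r(x)\cap\Omega)},
\end{equation*}
with $C$ depending only on $n$, $\Omega$ and $B_j$, $j=1,2$, and in particular independent of $x$ and $r$. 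Here $\lambda^{|\alpha|/2}$ is the gain per derivative and $\lambda^{(n+2)/4}r^{-1-n/2}$ the $L^2\to L^\infty$ cost.

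\textbf{Summation.} Summing the Taylor series of the analytic extension over $B_{\sigma r}(x)$, the powers $r^{-|\alpha|}$ and $(\sigma r)^{|\alpha|}$ cancel, and using that the number of multi-indices with $|\alpha|=k$ is $\le Ck^{n-1}$ one gets
\begin{equation*}
\|u\|_{L^\infty(B_{\sigma r}(x))}\le C\,\frac{\lambda^{\frac{n+2}{4}}}{r^{1+\frac n2}}\sum_{k\ge0}\frac{k^{n-1}}{k!}\,(C\sigma\sqrt\lambda)^k\,\|u\|_{L^2(B_r(x)\cap\Omega)}\le e^{C(\sqrt\lambda+\ln\lambda-\ln r)}\|u\|_{L^2(B_r(x)\cap\Omega)},
\end{equation*}
since $\sum_k k^{n-1}z^k/k!=p_{n-1}(z)e^z$ for a polynomial $p_{n-1}$ of degree $n-1$, while $\lambda^{(n+2)/4}r^{-1-n/2}=e^{\frac{n+2}{4}\ln\lambda}e^{(1+\frac n2)(-\ln r)}$. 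Finally $\ln\lambda\le\sqrt\lambda$ gives the claimed bound $\|u\|_{L^\infty(B_{\sigma r}(x))}\le e^{C(\sqrt\lambda-\ln r)}\|u\|_{L^2(B_r(x)\cap\Omega)}$.

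\textbf{Main obstacle.} The one genuinely delicate point is the uniformity asserted in the derivative bound: one must know that the analyticity radius of $\partial\Omega$ and of the boundary coefficients does not collapse faster than linearly as one approaches $\Gamma$, so that rescaling $B_{3r/4}(x)$ to unit size produces analytic data with $n,\Omega,B_j$-dependent (but $r$- and $x$-independent) bounds; this is exactly what allows the analytic elliptic estimates up to the analytic part of $\partial\Omega$ to be applied with a constant $C^k$ uniform in the centre $x$. Everything else is the routine iteration already carried out in Lemma~\ref{extending}, and retaining $-\ln r$ instead of bounding it by $\ln\lambda$ is the sole new bookkeeping.
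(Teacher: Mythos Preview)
Your proposal is correct and follows essentially the same approach as the paper: the paper's proof simply quotes the estimate $\|u\|_{L^\infty(B_{\sigma r}(x))}\le e^{C(\sqrt\lambda+\ln\lambda-\ln r)}\|u\|_{L^2(B_r(x)\cap\Omega)}$ from the proof of Lemma~\ref{extending} and then absorbs $\ln\lambda$ into $\sqrt\lambda$ for large $\lambda$, exactly as you do. Your write-up is more detailed than the paper's (in particular you make explicit the uniformity-in-$r$ issue for the analytic boundary estimates, which the paper leaves implicit), but the underlying argument is the same.
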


\begin{proof}
From the proof of Lemma $\ref{extending}$, it is known that, $\forall$ $x\in\overline{\Omega}\setminus\Gamma$ and any $\sigma\in(0,1)$,
\begin{equation*}
\|u\|_{L^{\infty}(B_{\sigma r}(x))}
\leq e^{C(\sqrt{\lambda}+\ln\lambda-\ln r)}\|u\|_{L^2(B_{r}(x)\cap\Omega)},
\end{equation*}
where $C$ is a positive constant depending only on $n$, $\Omega$ and $B_j$, $j=1,2$. Because $\ln\lambda<\sqrt{\lambda}$ when $\lambda$ large enough, the desired result is hold.
\end{proof}

By the same iteration argument in the proof of Theorem $\ref{upper bound for the frequency function}$, we have that

\begin{lemma}\label{frequency function near the point x0}
Let $u$ be an eigenfunction for the bi-harmonic operator with boundary conditions $(\ref{boundary conditions})$.
Let $x\in T_{R_0}(\Gamma)\setminus\Gamma$ and denote $\bar{r}=dist(x,\Gamma)$, where $R_0$ is the same constant as in Theorem $\ref{upper bound for the frequency function}$. Then for any $r\in(0,\bar{r})$, it holds that
\begin{equation}
N(x,r)\leq C(-\sqrt{\lambda}\ln r+\ln^2r).
\end{equation}
Here $C$ is a positive constant depending only on $n$, $\Omega$ and $B_j$, $j=1,2$.
\end{lemma}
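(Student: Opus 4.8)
The plan is to run the iteration of Theorem \ref{upper bound for the frequency function} ``inward'': start from a point where the frequency is already controlled and walk toward $x$, shrinking the working radius by a fixed factor at each step, with the analytic continuation of Lemma \ref{extending near x0} playing the role that the interior $L^\infty$ estimate played in Theorem \ref{upper bound for the frequency function}. Normalize $\|u\|_{L^2(\Omega)}=1$. By Theorem \ref{upper bound for the frequency function} together with Lemma \ref{relation of two kind frequency}, at each $z\in\Omega(R_0,\Gamma)$ one has $\bar N((z,0),cR_0)\le C\sqrt\lambda$ on the doubling index, and (as produced in the proof of Theorem \ref{upper bound for the frequency function}) a companion lower bound $\max_{B_{cR_0}(z,0)}|g|\ge e^{-C\sqrt\lambda}$ at a suitable point; fix such a $z_0$ on $\partial T_{R_0}(\Gamma)$ that can be joined to $x$ by a curve of length $<R_0$ inside $\overline\Omega\setminus\Gamma$. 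Throughout, $u$ (hence $v=(\triangle-\lambda)u$, $g$ and $h$) is taken to be the analytic extension through the analytic part of $\partial\Omega$, so the frequency machinery (Theorem \ref{monotonicity formula}, Lemma \ref{doubling condition for both u and v}, Lemma \ref{relation of two kind frequency}, Theorem \ref{frequency change center}) applies on the extended domain, and bounds on $g$ pass to $h$ and to $N$ by Lemma \ref{h^2 controled by g^2}.

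\textbf{The iteration.} Put $\delta=17/16$ and $\rho_j=\delta^{-j}R_0$, and let $m$ be the first index with $\rho_m\le\bar r$; since $\bar r<R_0<1$ this gives $m\le C(1-\ln\bar r)\le C(-\ln\bar r)$. Along the curve from $z_0$ to $x$ I would choose points $z_0,z_1,\dots,z_m$ with $z_m$ within $\bar r/100$ of $x$ and $dist(z_j,\Gamma)\ge\rho_j$ (the center moves only by $O(\rho_{j-1})$ per step, so this is possible). At the $j$-th step, starting from $\bar N(z_j,c\rho_j)\le A_j$ together with $\max_{B_{c\rho_j}(z_j)}|g|\ge e^{-A_j}$, I would: apply Lemma \ref{relation of two kind frequency} to pass to a bound on $N(z_j,\cdot)$; apply Theorem \ref{frequency change center} to recenter at $z_{j+1}$; apply Lemma \ref{relation of two kind frequency} again to return to a bound for $\bar N(z_{j+1},c\rho_{j+1})$ whose numerator is estimated, using Lemma \ref{extending near x0} (valid since $dist(z_{j+1},\Gamma)\ge\rho_{j+1}$, $\|u\|_{L^2}=1$, $\ln\lambda<\sqrt\lambda$), by
\[
\max_{B_{c\rho_{j+1}}(z_{j+1})}|g|\ \le\ e^{\sqrt\lambda\rho_{j+1}}\,\|u\|_{L^\infty(B_{c\rho_{j+1}}(z_{j+1}))}\ \le\ e^{C(\sqrt\lambda-\ln\rho_{j+1})},
\]
and whose denominator is bounded below by $e^{-A_j}$. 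Combining gives the (essentially additive) recursion $A_{j+1}\le A_j+C(\sqrt\lambda-\ln\rho_{j+1})+\sqrt\lambda\rho_{j+1}$.

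\textbf{Accumulation and conclusion.} Summing over $j$ and using $A_0\le C\sqrt\lambda$, $\sum_j\sqrt\lambda\rho_j\le C\sqrt\lambda R_0$, $\sum_{j=1}^m(-\ln\rho_j)=\sum_{j=1}^m(j\ln\delta-\ln R_0)\le Cm^2$, and $m\le C(-\ln\bar r)$, I obtain
\[
\bar N(z_m,c\bar r)\ \le\ A_m\ \le\ C\bigl(m\sqrt\lambda+m^2\bigr)\ \le\ C\bigl(-\sqrt\lambda\ln\bar r+\ln^2\bar r\bigr).
\]
Lemma \ref{relation of two kind frequency} then gives $N(z_m,c'\bar r)\le C(-\sqrt\lambda\ln\bar r+\ln^2\bar r)$ (the extra $\ln\lambda-\ln\bar r$ being absorbed), Theorem \ref{frequency change center} (since $|z_m-x|<\bar r/100$) gives $N(x,c''\bar r)\le C(-\sqrt\lambda\ln\bar r+\ln^2\bar r)$, and Corollary \ref{frequency control} propagates this to $N(x,r)\le C\max\{C_0,N(x,c''\bar r)\}$ for all $0<r<c''\bar r$; the remaining band $r\in[c''\bar r,\bar r)$ is handled directly by bounding $\bar N(x,r)$ via Lemma \ref{extending near x0} (numerator) and the lower bound on $\max|g|$ near $x$ from the iteration (denominator), then invoking Lemma \ref{relation of two kind frequency}. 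Finally $0<r<\bar r<1$ forces $0<-\ln\bar r\le-\ln r$, so $\bar r$ may be replaced by $r$ on the right, yielding $N(x,r)\le C(-\sqrt\lambda\ln r+\ln^2r)$.

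\textbf{Main obstacle.} The delicate point is controlling the accumulation over the iteration: one needs the per-step passage $\bar N(z_j,\cdot)\mapsto\bar N(z_{j+1},\cdot)$ to be essentially additive in the doubling index, and the number of steps to be only $O(\log(1/\bar r))$ -- which is exactly why the radius is shrunk by the fixed factor $\delta$ rather than by a $\lambda$-dependent amount. The $\sqrt\lambda$-cost of the analytic continuation at each step (Lemma \ref{extending near x0}, in contrast with the $\ln\lambda$-cost of the interior estimate in Theorem \ref{upper bound for the frequency function}) is what produces the $-\sqrt\lambda\ln r$ term, while the harmonically growing sum $\sum_j(-\ln\rho_j)\sim m^2$ produces the $\ln^2 r$ term; one must also track carefully that near $\Gamma\subseteq\partial\Omega$ the balls protrude from $\Omega$, so every estimate is on the extended function. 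The blow-up of the bound as $x\to\Gamma$ is unavoidable, but this merely logarithmic blow-up is precisely what is needed afterward, given that $\Gamma$ has codimension two.
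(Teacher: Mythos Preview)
Your proposal is correct and follows essentially the same approach as the paper's own proof. Both arguments start from a point on $\partial T_{R_0}(\Gamma)$ where Theorem~\ref{upper bound for the frequency function} already gives $\bar N\le C\sqrt\lambda$, then iterate inward by shrinking the working radius geometrically (you use $\rho_j=\delta^{-j}R_0$ with $\delta=17/16$; the paper uses $R_j=(1-\epsilon)^jR_0$ with $\epsilon=\theta\sigma/16$), invoking Lemma~\ref{extending near x0} at each step for the numerator of the doubling index and propagating the previous lower bound on $\max|g|$ for the denominator, yielding the same accumulated bound $C(k\sqrt\lambda+k^2)$ with $k\sim -\ln\bar r$. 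The only cosmetic difference is that you phrase the march as moving along a curve from $z_0$ to $x$, whereas the paper phrases it as stepping through the annular layers $\{dist(\cdot,\Gamma)=R_j\}$; the constraint $dist(z_j,\Gamma)\ge\rho_j$ that you impose is exactly what the layer picture enforces automatically.
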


\begin{proof}
For any $x^*_0\in\overline{\Omega}$ with $dist(x^*_0,\Gamma)=R_0$, from Theorem $\ref{upper bound for the frequency function}$, we have already known that
\begin{equation*}
\bar{N}(x^*_0, r)\leq C\sqrt{\lambda}
\end{equation*}
for any $r<R_0$. Let $\sigma$ be a constant in $(0,1)$.
Because $\Gamma$ is compact, we know that
\begin{equation*}
T_{R_0}(\Gamma)\setminus T_{(1-\epsilon)R_0}(\Gamma)\subseteq\cup_{x^*\in\overline{\Omega},dist(x^*,\Gamma)=R_0}B_{\sigma R_0/16}(x^*),
\end{equation*}
where $\epsilon=\theta\sigma/16$ and $\theta\in(0,1)$ is a positive constant depending only on $\Omega$ and $\Gamma$. Let $R_1=(1-\epsilon)R_0$.
Then for any $x^*_1\in\overline{\Omega}$ such that $dist(x^*_1,\Gamma)=R_1$, there exists some point $x^*_0\in\overline{\Omega}$ with $dist(x^*_0,\Gamma)=R_0$ such that $x^*_1\in\overline{B_{\sigma R_0/16}(x^*_0)}$.
\begin{eqnarray*}
\|u\|_{L^{\infty}(B_{\sigma R_1/4}(x^*_1))}&\geq&
\|u\|_{L^{\infty}(B_{\sigma R_0/8}(x^*_0))}\\&\geq&
e^{-C\sqrt{\lambda}}\|u\|_{L^{\infty}(B_{\sigma R_0}(x^*_0)\cap\Omega)}.
\end{eqnarray*}
Thus for $y^*_1=(x^*_1,0)$, it holds that
\begin{eqnarray*}
\|g\|_{L^{\infty}(B_{\sigma R_1/2}(y^*_1))}&\geq&e^{-C\sqrt{\lambda}R_1}\|u\|_{L^{\infty}(B_{\sigma R_1/4}(x^*_1))}\\&\geq& e^{-C(\sqrt{\lambda}+\sqrt{\lambda}R_1)}\|u\|_{L^{\infty}(B_{\sigma R_0}(x^*_0)\cap\Omega)}.
\end{eqnarray*}
On the other hand, from Lemma $\ref{extending near x0}$, we have
\begin{eqnarray*}
\|u\|_{L^{\infty}(B_{\sigma R_1}(x^*_1))}&\leq&e^{C(\sqrt{\lambda}-\ln R_1)}\|u\|_{L^{\infty}(B_{R_1}(x^*_1)\cap\Omega)}\\&\leq&
e^{C(\sqrt{\lambda}-\ln R_1)}\|u\|_{L^{\infty}(B_{R_0}(x^*_0)\cap\Omega)}.
\end{eqnarray*}
Thus
\begin{equation*}
\|u\|_{L^{\infty}(B_{\sigma R_1}(y^*_1))}\leq e^{C(\sqrt{\lambda}+\sqrt{\lambda}R_1-\ln R_1)}\|u\|_{L^{\infty}(B_{R_0}(x^*_0)\cap\Omega)}.
\end{equation*}
From the above inequalities and the relationship between $u$ and $g$, we have
\begin{equation*}
\bar{N}(y^*_1,\sigma R_1)\leq C(\sqrt{\lambda}-\ln R_1+\sqrt{\lambda}R_1).
\end{equation*}
Repeat these arguments for $k$ times such that $R_{k-1}\geq r$ and $R_{k}<r$. Then  we have that for $y=(x,0)$,
\begin{eqnarray*}
\bar{N}(y,\sigma\bar{r})&\leq&C\bar{N}(y^*_k,R_k)\\&\leq&C(k\sqrt{\lambda}-(\ln R_1+\cdots+\ln R_k)+\sqrt{\lambda}(R_1+\cdots+R_k))
\\&\leq&C\left(k\sqrt{\lambda}-k\ln R_0-(1+2+\cdots+k)\ln(1-\epsilon)+\sqrt{\lambda}R_0\frac{1-(1-\epsilon)^k}{1-(1-\epsilon)}\right)
\\&\leq&C(k\sqrt{\lambda}-k\ln R_0+k^2+R_0\sqrt{\lambda}).
\end{eqnarray*}
From $R_{k-1}\geq r$ and $R_{k}<r$, we have that
\begin{equation*}
(1-\epsilon)^{k-1}R_0\geq r,
\end{equation*}
and
\begin{equation*}
(1-\epsilon)^kR_0<r.
\end{equation*}
These show that
\begin{equation*}
k\leq -C\ln r
\end{equation*}
for some positive constant $C$ depending only on $n$, $\Omega$, $\Gamma$, and $R_0$.
Thus we have
\begin{eqnarray*}
\bar{N}(x,\sigma r)&\leq&C(k\sqrt{\lambda}-k\ln R_0+k^2+R_0\sqrt{\lambda})
\\&\leq&C(-\ln r\sqrt{\lambda}+\ln^2 r+R_0\sqrt{\lambda})
\\&\leq&C(-\ln r\sqrt{\lambda}+\ln^2r).
\end{eqnarray*}
Because $\sigma$ can be chosen as any number in $(0,1)$, we can get the desired result.
\end{proof}

Now we can establish the measure upper bound of the nodal set of $u$ near the boundary $\partial\Omega$.

\begin{theorem}\label{measure of nodal set near the boundary}
Let $u$ be an eigenfunction of the bi-harmonic operator on a $C^{\infty}$ bounded domain $\Omega\subseteq\mathbb{R}^n$ with the boundary condition $B_ju=0$, $j=1,2$ on $\partial\Omega$, and the corresponding eigenvalue is $\lambda^2$. Also suppose that $\partial\Omega$ is piecewise analytic and $\partial\Omega\setminus\Gamma$ is analytic, where $\Gamma\subseteq\partial\Omega$ is a finite union of some $(n-2)$ dimensional submanifolds of $\partial\Omega$. Then for $\lambda$ large enough, we have the following measure estimate:
\begin{equation}
\mathcal{H}^{n-1}\left(\left\{x\in T_{R_0}(\Gamma)\cap\Omega|u(x)=0\right\}\right)\leq C\sqrt{\lambda},
\end{equation}
where $C$ is a positive constant depending only on $n$, $\Omega$, $B_j$, $j=1,2$ and $R_0$. Here $R_0$ is the same positive constant as in Theorem $\ref{upper bound for the frequency function}$.
\end{theorem}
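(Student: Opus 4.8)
The plan is to exhaust $T_{R_0}(\Gamma)\cap\Omega$ by a dyadic family of shells determined by the distance to $\Gamma$, to estimate the nodal set inside each shell by combining the local measure bound of Theorem~\ref{measure estimate of nodal set of the case L=tirangle} with the near-$\Gamma$ frequency bound of Lemma~\ref{frequency function near the point x0}, and then to sum a convergent series. For $j=0,1,2,\dots$ I would set $r_j=2^{-j}R_0$ and
\begin{equation*}
A_j=\left\{x\in\overline{\Omega}:\ r_{j+1}<\mathrm{dist}(x,\Gamma)\le r_j\right\},
\end{equation*}
so that the $A_j$ are disjoint and $T_{R_0}(\Gamma)\cap\Omega\subseteq\Gamma\cup\bigcup_{j\ge0}A_j$. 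Since $\Gamma$ is a finite union of compact $(n-2)$-dimensional submanifolds we have $\mathcal{H}^{n-1}(\Gamma)=0$, so $\{u=0\}\cap\Gamma$ contributes nothing and it suffices to bound $\mathcal{H}^{n-1}(\{u=0\}\cap A_j)$ for each $j$ and add up over $j$.

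Fix $j$ and put $\rho_j=\tfrac12 r_{j+1}$. For every $x\in A_j$ one has $\rho_j<\tfrac34\,\mathrm{dist}(x,\Gamma)$, so by Lemma~\ref{extending near x0} the eigenfunction $u$ extends analytically — still solving $\triangle^2u=\lambda^2u$ — into a ball containing $\overline{B_{\rho_j}(x)}$, and by Lemma~\ref{frequency function near the point x0}, writing $y=(x,0)$ and using that $-\ln\rho_j$ is comparable to $j+1$,
\begin{equation*}
N(y,\rho_j)\le C\left(-\sqrt{\lambda}\ln\rho_j+\ln^2\rho_j\right)\le C\left((j+1)\sqrt{\lambda}+(j+1)^2\right).
\end{equation*}
Applying Theorem~\ref{measure estimate of nodal set of the case L=tirangle} to the extended $u$ on $B_{\rho_j}(x)$ with the choice $r=\rho_j/8$, and absorbing the terms $\ln\lambda\le\sqrt{\lambda}$, $\sqrt{\lambda}\,r\le\sqrt{\lambda}R_0$ and $-\ln r\le C(j+1)$ into the right-hand side, I would obtain, for every $x\in A_j$,
\begin{equation*}
\mathcal{H}^{n-1}\!\left(\{u=0\}\cap B_{r_j/512}(x)\right)\le C\left((j+1)\sqrt{\lambda}+(j+1)^2\right)r_j^{\,n-1},
\end{equation*}
where $r/16=r_j/512$ and $r_j<1$ were used.

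Next I would cover $A_j$: since $A_j\subseteq T_{r_j}(\Gamma)$ and $\Gamma$ has codimension $2$, the tube $T_{2r_j}(\Gamma)$ has $n$-dimensional Lebesgue measure at most $Cr_j^2$ (with $C$ depending only on $n$ and $\Gamma$, provided $R_0$ is small), so a maximal $(r_j/512)$-separated subset $\{x_i\}$ of $A_j$ has at most $Cr_j^{-(n-2)}$ points and the balls $B_{r_j/512}(x_i)$ cover $A_j$. Summing the previous bound over these balls gives
\begin{equation*}
\mathcal{H}^{n-1}\!\left(\{u=0\}\cap A_j\right)\le Cr_j^{-(n-2)}\left((j+1)\sqrt{\lambda}+(j+1)^2\right)r_j^{\,n-1}=C\left((j+1)\sqrt{\lambda}+(j+1)^2\right)2^{-j}R_0.
\end{equation*}
Finally, summing over $j\ge0$ and using that $\sum_{j\ge0}(j+1)2^{-j}$ and $\sum_{j\ge0}(j+1)^22^{-j}$ are finite together with $\sqrt{\lambda}\ge1$,
\begin{equation*}
\mathcal{H}^{n-1}\!\left(\{u=0\}\cap T_{R_0}(\Gamma)\cap\Omega\right)\le\sum_{j\ge0}\mathcal{H}^{n-1}\!\left(\{u=0\}\cap A_j\right)\le CR_0\left(\sqrt{\lambda}+1\right)\le C\sqrt{\lambda},
\end{equation*}
as claimed; this together with Theorem~\ref{nodal set of interior domain} will complete the proof of the main result.

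The main obstacle — and the reason the scheme works — is the quantitative balance in this dyadic sum. Lemma~\ref{frequency function near the point x0} forces the frequency, hence through Theorem~\ref{measure estimate of nodal set of the case L=tirangle} the per-ball nodal measure, to grow only like $(j+1)\sqrt{\lambda}+(j+1)^2$ as the center approaches $\Gamma$, i.e.\ polynomially in $j=\log_2(R_0/r_j)$ rather than like a negative power of $r_j$. Because $\Gamma$ has codimension $2$, covering $A_j$ costs $r_j^{-(n-2)}$ balls while each ball contributes $r_j^{\,n-1}$, so exactly one factor $r_j=2^{-j}R_0$ remains; this single geometric gain defeats the polynomial growth in $j$ and makes the series converge. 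The accounting would fail if the frequency blew up faster near $\Gamma$ or if $\dim\Gamma$ were $n-1$. I also expect some care to be needed in choosing the radii so that $u$ is genuinely available, via the analytic extension of Lemma~\ref{extending near x0}, on the balls to which Theorem~\ref{measure estimate of nodal set of the case L=tirangle} is applied, and so that the covering balls keep a definite distance from $\Gamma$ for Lemma~\ref{frequency function near the point x0} to apply at their centers; this is what forces the covering scale to be a small fixed fraction of $r_j$ and affects only the constants.
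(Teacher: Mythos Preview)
Your proposal is correct and follows essentially the same approach as the paper: a dyadic decomposition of $T_{R_0}(\Gamma)$ into shells $A_j$ at distance $\sim 2^{-j}R_0$ from $\Gamma$, the near-$\Gamma$ frequency bound of Lemma~\ref{frequency function near the point x0} together with the per-ball nodal estimate of Theorem~\ref{measure estimate of nodal set of the case L=tirangle}, a covering of each shell by $O(r_j^{-(n-2)})$ balls (using that $\Gamma$ has codimension $2$), and summation of the resulting series $\sum_j (j\sqrt{\lambda}+j^2)2^{-j}$. Your explicit accounting of the radii needed for the analytic extension (Lemma~\ref{extending near x0}) and your closing paragraph on the balance between the polynomial-in-$j$ blow-up of the frequency and the geometric gain $2^{-j}$ make the mechanism clearer than in the paper itself, but the argument is the same.
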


\begin{proof}

First consider the nodal set in $\left(T_{R_0}(\Gamma)\setminus T_{R_0/2}(\Gamma)\right)\cap\Omega$.
One can use finitely many balls with radius $\sigma R_0$ to cover the set $(T_{R_0}(\Gamma)\setminus T_{R_0/2}(\Gamma))\cap\Omega$. Here $\sigma$ is the same positive constant as in Lemma $\ref{extending near x0}$. The number of these balls is $C/R_0^{n-2}$, where $C>0$ depends only on $n$ and $\Omega$, and is independent of the radius $R_0$.
Because $u$ is analytic in $\left(T_{R_0}(\Gamma)\setminus T_{R_0/2}(\Gamma)\right)\cap\Omega$,
and the upper bound for the frequency function in this case is $C(\sqrt{\lambda}-\ln\lambda\ln R_0-\ln^2R_0)$
from Lemma $\ref{frequency function near the point x0}$, we have
\begin{eqnarray*}
\mathcal{H}^{n-1}\left(\left\{x\in \left(T_{R_0}(\Gamma)\setminus T_{R_0/2}(\Gamma)\right)\cap\Omega|u(x)=0\right\}\right)&\leq& C(-\ln R_0\sqrt{\lambda}+\ln^2R_0)R_0^{n-1}\cdot\frac{1}{R_0^{n-2}}\\&\leq&C(-R_0\ln R_0\sqrt{\lambda}+R_0\ln^2R_0).
\end{eqnarray*}

Now we consider the nodal set in $\left(T_{R_0/2}(\Gamma)\setminus T_{R_0/4}(\Gamma)\right)\cap\Omega$. One can also use finitely many balls with radius $\sigma R_0/2$ to cover the set $\left(T_{R_0/2}(\Gamma)\setminus T_{R_0/4}(\Gamma)\right)\cap\Omega$, and the number of these balls is $C/(R_0/2)^{n-2}$ with $C>0$ depends only on $n$ and $\Omega$. Then we can get the measure of the nodal set in this domain as follows.
\begin{eqnarray*}
\mathcal{H}^{n-1}\left(\left\{x\in (T_{R_0/2}(\Gamma)\setminus T_{R_0/4}(\Gamma))\cap\Omega:u(x)=0\right\}\right)&\leq& C\left(-\ln\frac{R_0}{2}\sqrt{\lambda}+\ln^2\frac{R_0}{2}\right)
\frac{R_0}{2}^{n-1}\cdot\frac{1}{\left(\frac{R_0}{2}\right)^{n-2}}
\\&\leq&C\left(-\frac{R_0}{2}\ln\frac{R_0}{2}\sqrt{\lambda}+\frac{R_0}{2}\ln^2\frac{R_0}{2}\right).
\end{eqnarray*}

Continue this argument step by step.
By the iteration method, we have that
\begin{eqnarray*}
\mathcal{H}^{n-1}\left(\left\{x\in T_{R_0}(\Gamma)\cap\Omega:u(x)=0\right\}\right)&\leq&
\sum\limits_{j=0}^{\infty}\mathcal{H}^{n-1}\left(\left\{x\in B_{R_0/2^j}(\Gamma)\setminus B_{R_0/2^{j+1}}(\Gamma)\cap\Omega|u(x)=0\right\}\right)\\&
\\&\leq&CR_0\left(\sum\limits_{j=0}^{\infty}\frac{j\ln 2-\ln R_0}{2^j}\sqrt{\lambda}+\sum\limits_{j=0}^{\infty}\frac{1}{2^j}\ln^2\frac{R_0}{2^j}\right)
\\&\leq&CR_0\left(\sqrt{\lambda}\sum\limits_{j=0}^{\infty}\frac{j}{2^j}+\sum\limits_{j=0}^{\infty}\frac{j^2}{2^j}\right)
\\&\leq&C\sqrt{\lambda},
\end{eqnarray*}
where the last inequality used the assumption that $\lambda$ large enough. That is the desired result.
\end{proof}

From Theorem $\ref{measure of nodal set near the boundary}$ and Theorem $\ref{nodal set of interior domain}$, we can get the desired upper bound estimate of the nodal set of $u$ in $\Omega$.

\begin{theorem}\label{finally theorem}
Let $u$ be an eigenfunction satisfies the boundary condition $(\ref{boundary conditions})$ and $\lambda^2$ be the corresponding eigenvalue. Assume that $\Omega$ is a $C^{\infty}$ bounded domain, $\partial\Omega$ is piecewise analytic, and $\partial\Omega\setminus\Gamma$ is analytic, where  $\Gamma\subseteq\partial\Omega$ is a finite union of some $(n-2)$ dimensional submanifolds of $\partial\Omega$. Then
\begin{equation}
\mathcal{H}^{n-1}\left\{x\in\Omega|u(x)=0\right\}\leq C\sqrt{\lambda},
\end{equation}
where $C$ is a positive constant depending only on $n$, $\Omega$ and the boundary operators $B_j$, $j=1,2$.
\end{theorem}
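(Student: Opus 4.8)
\emph{Proof proposal.} The plan is to decompose $\Omega$ into a region bounded away from the nonanalytic set $\Gamma$ and a fixed tubular neighborhood of $\Gamma$, and to estimate the $\mathcal{H}^{n-1}$ measure of the nodal set in each region separately by the two preceding theorems. Fix the constant $R_0$ furnished by Theorem \ref{upper bound for the frequency function}, set $\Omega(R_0,\Gamma)=\overline{\Omega\setminus T_{R_0}(\Gamma)}$, and observe that
\begin{equation*}
\{x\in\Omega:u(x)=0\}=\Big(\{x\in\Omega:u(x)=0\}\cap\Omega(R_0,\Gamma)\Big)\cup\Big(\{x\in\Omega:u(x)=0\}\cap T_{R_0}(\Gamma)\Big),
\end{equation*}
since every point of $\Omega$ with $dist(x,\Gamma)\geq R_0$ lies in $\Omega(R_0,\Gamma)$. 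By finite subadditivity of Hausdorff measure it then suffices to bound each of the two sets on the right by $C\sqrt{\lambda}$.

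For the first set I would apply Theorem \ref{nodal set of interior domain} directly, which already yields $\mathcal{H}^{n-1}\big(\{x\in\Omega(R_0,\Gamma):u(x)=0\}\big)\leq C\sqrt{\lambda}$; recall that this is obtained by covering the compact set $\Omega(R_0,\Gamma)$ by a bounded number of balls of a fixed small radius, applying the ball-wise estimate of Theorem \ref{measure estimate of nodal set of the case L=tirangle} in each ball together with the uniform frequency bound $N(y,r)\leq C\sqrt{\lambda}$ of Theorem \ref{upper bound for the frequency function}, and using Lemma \ref{extending} to extend $u$ analytically across the analytic part of $\partial\Omega$ so that the interior ball estimates remain valid near $\partial\Omega\setminus T_{R_0}(\Gamma)$. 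For the second set I would apply Theorem \ref{measure of nodal set near the boundary}, which gives $\mathcal{H}^{n-1}\big(\{x\in T_{R_0}(\Gamma)\cap\Omega:u(x)=0\}\big)\leq C\sqrt{\lambda}$; this is the substantive estimate, proved by a dyadic decomposition of $T_{R_0}(\Gamma)$ into shells $T_{R_0/2^{j}}(\Gamma)\setminus T_{R_0/2^{j+1}}(\Gamma)$, covering the $j$-th shell by $O(2^{j(n-2)})$ balls of radius $\sim R_0/2^{j}$ (which is where $\dim\Gamma=n-2$ enters), controlling the nodal measure in each such ball through the logarithmic frequency bound $N(x,r)\leq C(-\sqrt{\lambda}\ln r+\ln^2 r)$ of Lemma \ref{frequency function near the point x0}, and summing the resulting convergent series $\sum_{j\geq0}2^{-j}(j\sqrt{\lambda}+j^2)$. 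Adding the two estimates gives $\mathcal{H}^{n-1}\big(\{x\in\Omega:u(x)=0\}\big)\leq C\sqrt{\lambda}$ with $C$ depending only on $n$, $\Omega$ and the boundary operators $B_j$.

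It remains to dispose of the standing assumption ``$\lambda$ large enough'' that is invoked in the monotonicity formula, in Lemma \ref{relation of two kind frequency}, and in the extension lemmas. For $\lambda$ below any fixed threshold the biharmonic eigenvalue problem with the boundary conditions $(\ref{boundary conditions})$ has only finitely many eigenvalues, of finite multiplicity, and the estimates of the previous sections still apply and produce a finite bound on $\mathcal{H}^{n-1}\big(\{u=0\}\cap\Omega\big)$ that is, a fortiori, $\leq C\sqrt{\lambda}$ after enlarging $C$; we therefore assume throughout that $\lambda$ is as large as all the cited results require. Consequently all of the genuine work is already packaged in Theorems \ref{nodal set of interior domain} and \ref{measure of nodal set near the boundary}, and the only thing to verify at this final stage is the bookkeeping — that the two regions indeed cover $\Omega$ and that the various constants $C$ all depend only on $n$, $\Omega$ and $B_j$; I do not expect any analytic obstacle here, the real difficulties having been the frequency upper bound in Theorem \ref{upper bound for the frequency function} and the iteration near $\Gamma$.
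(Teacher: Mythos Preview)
Your proposal is correct and follows exactly the paper's approach: the paper's proof of this theorem is a single sentence combining Theorem~\ref{nodal set of interior domain} and Theorem~\ref{measure of nodal set near the boundary} via the decomposition $\Omega=\Omega(R_0,\Gamma)\cup\big(T_{R_0}(\Gamma)\cap\Omega\big)$. Your additional paragraph disposing of the ``$\lambda$ large enough'' hypothesis is a reasonable supplement that the paper leaves implicit.
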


\begin{remark}\label{condition weakend}
Note that the $C^{\infty}$ property for $\partial\Omega$ and the coefficients of the boundary operators $B_j$, $j=1,2$, are only used in the $L^{\infty}$ estimation of $u$ and the uniform $L^{\infty}$ estimation of $D^{\alpha}u$ for any fixed multi-index $\alpha$. Thus we may only assume that
$\partial\Omega$ is piecewise analytic and $C^{(n+1)/2}$ continuous, and $\partial\Omega\setminus\Gamma$ is analytic.
\end{remark}

\begin{remark}
If we assume that
\begin{equation}\label{very strong condition}
\|u\|_{L^{\infty}(\Omega)}\leq e^{C\sqrt{\lambda}}\|u\|_{L^2(\Omega)},
\end{equation}
then the smooth condition for the domain $\Omega$ can be weakened further. In fact, if $(\ref{very strong condition})$ holds, then we only need that $\Omega$ is bounded, $\partial\Omega$ is continuous and piecewise  analytic, and $\partial\Omega\setminus\Gamma$ is analytic. Because the $C^{(n+1)/2}$ continuity property for $\partial\Omega$ in Remark $\ref{condition weakend}$ is only used in the estimation of $L^{\infty}$ norm of $u$.
Moreover, if one of the boundary conditions in $(\ref{boundary conditions})$ is $u=0$ on $\partial\Omega$, then the condition $(\ref{very strong condition})$ can also be omitted.
The reason is that, under these assumptions, the upper bounds for the $L^{\infty}$ norms of $u$ and $D^{\alpha}u$ in $\Omega$ are
\begin{equation*}
\|u\|_{L^{\infty}(\Omega)}\leq C\lambda^{\frac{n+1}{4}}\|u\|_{L^2(\Omega)}
\end{equation*}
and
\begin{equation*}
|D^{\alpha}u(x)|\leq C\frac{\lambda^{(\frac{|\alpha|}{2}+\frac{n+2}{4})}}
{r^{|\alpha|+1+\frac{n}{2}}}\|u\|_{L^2(B_{r}(x)\cap\Omega)}
\end{equation*}
respectively, provided that $\partial\Omega$ is bounded and continuous.
This case contains a lot of usual domains. For instance, a polygon in two dimensional case, a polyhedron in three dimensional case, etc..
\end{remark}


\newpage



\begin{thebibliography}{}

\bibitem{F.J.Almgren} F.J.Almgren, Dirichlet's problem for muptible valued functions and the regularity of mass minimizing integral currents, in ``Minimal submanifolds and geodesics'', (M.Obara, Ed.)1-6, North Holland, Amsterdam, 1979.

\bibitem{T.H.Colding and W.P.Minicozzi} T.H.Colding, W.P.Minicozzi $\prod$, Lower bounds for nodal sets of eigenfunctions, Comm.Math.Phys., 306(2011), 777-784.

\bibitem{R.T.Dong} R.T.Dong, Nodal sets of eigenfunctions on Rieman surfaces, J.Differential Geom., 36(1992), No.2, 483-506.

\bibitem{H.Donnelly and C.Fefferman1} H.Donnelly, C.Fefferman, Nodal sets of eigenfunctions on Riemannian manifolds, Invent.Math., 93(1988), 161-183.

\bibitem{H.Donnelly and C.Fefferman2} H.Donnelly, C.Fefferman, Nodal sets for eigenfunctions of the Laplacian on surfaces, J.Amer.Math.Soc., 31(1990), 333-353.

\bibitem{N.Garofalo and F.H.Lin1} N.Garofalo, F.H.Lin, Monotonicity properties of variational integrals, $A_p$-weights and unique continuation, Indiana Univ. Math. J., 35,245-267, 1996.

\bibitem{N.Garofalo and F.H.Lin2} N.Garofalo, F.H.Lin, Unique continuation for elliptic operators: a geometric variational approach, Comm. Pure. Appl. Math., 40, 347-366, 1987.

\bibitem{Q.Han R.Hardt and F.H.Lin} Q.Han, R.Hardt, F.H.Lin, Singular sets of higher order elliptic equations, Comm. Partial Differential Equations, 28(11-12)(2003)2045-2063.

\bibitem{R.Hardt and L.Simon} R.Hardt, L.Simon, Nodal sts for solutions of elliptic equations, J.Differential Geom., 30(1989), No.2, 505-522.

\bibitem{Q.Han and F.H.Lin} Q.Han, F.H.Lin, Nodal sets of solutions of elliptic differential equations, available on http://www3.nd.edu/~qhan/nodal.pdf.

\bibitem{Q.Han} Q.Han, Sch$\ddot{a}$uder estimates for elliptic operators with applications to nodal sets, Journal of Geometric Analysis, 10(3), 455-480, 2000.

\bibitem{I.Kukavica} I.Kukavica, Nodal volumes for eigenfunctions of analytic regular elliptic problems, J.d'Analyse.Math., 67(1995), No.1, 269-280.

\bibitem{F.H.Lin} F.H.Lin, Nodal sets of solutions of elliptic and parabolic equations, Comm. Pure. Appl. Math., 44, 287-308, 1991.

\bibitem{F.H.Lin and X.P.Yang} F.H.Lin, X.P.Yang, Geometric measure theory--an introduction, Adv.Math., Vol.1, Science Press/International Press, Beijing/Boston, 2002.

\bibitem{L and M1} J.L.Lions and E.Magenes, Non-homogeneous boundary value problems and applications, Vol.1, Springer-Verlag, New York, 1972.

\bibitem{L and M2} J.L.Lions and E.Magenes, Non-homogeneous boundary value problems and applications, Vol.3, Springer-Verlag, New York, 1973.

\bibitem{A.Logunov and E.malinnikova} A.Logunov, E.Malinnikova, Nodal sets of Laplace eigenfunctions: estimates of the Hausdorff measure in dimension two and three, arXiv: 1605.02595(2016).

\bibitem{A.Logunov} A.Logunov, Nodal sets of Laplace eigenfunctions: polynomial upper estimates of the Hausdorff measure, arXiv:1605.02587(2016).

\bibitem{C.D.Sogge and S.Zelditch} C.D.Sogge, S.Zelditch, Lower bounds on the Hausdorff measure of nodal sets, Math.Research Letters, 18(2011), No.1, 25-37.

\bibitem{L.Tian and X.P.Yang} L.Tian, X.P.Yang, Measure estimates of nodal sets of bi-harmonic functions, J.Differential Equations, 256(2014), 558-576.

\bibitem{S.T.Yau} S.T.Yau, Seminar on differential geomerty, Annals of Mathematical Studies, Princeton, 1982.

\end{thebibliography}
\end{document}